\documentclass{actams-author}
\usepackage{color}
\numberwithin{equation}{section}

\begin{document}
	
	\ID{E13-xxx}
	
	\DATE{Final, 201x-xx-xx}
	
	\PageNum{1}
	
	\Volume{201x}{}{3x}{x}
	
	\EditorNote{$^*$ Received x x, 201x; revised x x, 201x. This research is supported by the National Key R$\&$D Program of China (No. 2020YFA0712901).}
	
	\abovedisplayskip 6pt plus 2pt minus 2pt
	\belowdisplayskip 6pt plus 2pt minus 2pt

	\def\vsp{\vspace{1mm}}
	\def\th#1{\vspace{1mm}\noindent{\bf #1}\quad}
	\def\proof{\vspace{1mm}\indent{\bf Proof}\quad}
	\def\no{\nonumber}
	
	\def\q{\quad}\def\qq{\qquad}
	
	\newenvironment{prof}[1][Proof]{\indent\textbf{#1}\quad}{\hfill $\Box$\vspace{0.7mm}}
	
	\allowdisplaybreaks[4]
	
	\theoremstyle{plain}
	\newtheorem{condition}[theorem]{Condition}
	
	\def\blue{\color{blue}}\def\red{\color{red}}

	\AuthorMark{Z. Cheng\,\& Z. Li}
	
	\TitleMark{\uppercase{Remaining-lifetime age-structured branching processes}}
	
	\title{\uppercase{Remaining-lifetime age-structured branching processes}
		\footnote{ }}
	
		
		\author{\sl{Ziling \uppercase{Cheng}}} 
		{School of Mathematical Sciences, Beijing Normal University, Beijing {\rm100875}, People's Republic of China\\
			E-mail\,$:$ zlcheng@mail.bnu.edu.cn}
		
		\author{\sl{Zenghu \uppercase{Li}}}
		{School of Mathematical Sciences, Beijing Normal University, Beijing {\rm100875}, People's Republic of China\\
			E-mail\,$:$ lizh@bnu.edu.cn}

		\maketitle
		
	
	\Abstract{We study age-structured branching models with reproduction law depending on the remaining lifetime of the parent. The lifespan of an individual is decided at its birth and its remaining lifetime decreases at the unit speed. The models without or with immigration are constructed as measure-valued processes by pathwise unique solutions of stochastic equations driven by time-space Poisson random measures. In the subcritical branching case, we give a sufficient condition for the ergodicity of the process with immigration. Two large number laws and a central limit theorem of the occupation times are proved.}
	
	\Keywords{Branching process; remaining lifetime; immigration; stochastic equation; ergodicity; occupation time; large number law; central limit theorem.}
	
	\MRSubClass{60J80, 60J85, 60H15, 60F15, 60F05}
	
	\section{Introduction}\label{section1}
	
	The oldest model for the stochastic evolution of a population is the Bienaym\'{e}-Galton-Watson process, which assumes an individual has unit lifespan and gives birth to offspring at the end of its life. It is a discrete-time Markov chain introduced by Bienaym\'{e} \cite{Bie45} and Galton and Watson \cite{Watson74}. For a classical continuous-time branching process, an individual is assumed to have exponentially distributed  lifespan and split independently of the others on its death. These assumptions were decided by mathematical convenience. From the viewpoint of applications, it is reasonable to consider more general assumptions on the lifespan and branching mechanism. The age-dependent branching process introduced by Bellman and Harris \cite{Bellman52} assumes that the individual may have a general lifespan distribution, but children are only born when a parent dies. General branching processes allowing births throughout the whole life of the parent were introduced by Crump and Mode \cite{Crump68} and Jagers \cite{Jagers69}; see also \cite{Crump69, Doney72a, Doney72b, Jagers89, Kendall49}. Those models can be formulated in terms of measure-valued Markov processes, although the evolutions of the population sizes are usually not Markovian. In fact, the Markov property remains even when the lifespan and the reproduction are allowed to depend on the whole population; see, e.g., \cite{Hamza13, Jagers00, Jagers11, Metz13, Oelschlager90, Tran08}. Several authors have also studied population models where the reproduction of an individual may depend on its age; see, e.g., \cite{Bose95, Bose86, Bose00, Dawson02, Kaj98}.
	
	Stochastic equations have been playing increasingly important roles in recent developments of the theory of branching processes. For their applications to discrete-state branching models, the reader may refer to \cite{CFM07, FM04, Fan20, Ji21, Tran08}, where the individuals were allowed to give birth to offspring at death. The approach has also played an important role in the study of continuous-state branching processes; see, e.g., \cite{DaL06, DaL12, HLY14, Li20, Li22, Par16, Xiong13}. In particular, several types of measure-valued branching processes have been constructed in \cite{DaL12, HLY14, Li22, Par16, Xiong13} by the pathwise unique solutions to stochastic equations driven by time-space noises.

	The occupation times give useful perspectives to the study of population models. The longtime behavior of such times is an interesting topic of research and has been investigated by several authors. In particular, Iscoe \cite{Iscoe86a} defined the occupation time process for a measure-valued branching process and proved several associated central limit theorems; see also \cite{Dawson01, Hong02, Iscoe86b, Li99, Tang06}.

	In this paper, we study a special class of age-structured branching processes, where the lifespan of an individual is distributed arbitrarily and determined at its birth. The remaining lifetime of an individual decreases at the unit speed. Furthermore, we allow the birth rate and the reproduction law to depend on the remaining lifetime of the parent. The model is a special case of the age-structured branching process introduced by Crump and Mode \cite{Crump68} and Jagers \cite{Jagers69}. It can also be thought of as a typical special non-local branching particle systems; see, e.g., \cite{Bose00, Bose95, Dawson02, Kaj98, Li22}. We give constructions of the models without or with immigration as measure-valued processes by pathwise unique solutions to stochastic integral equations driven by time-space Poisson random measures. In particular, we obtain the martingale problems associated with the processes as an application of the stochastic equations. In the subcritical branching case, we give a sufficient condition for the ergodicity of the process with immigration. Moreover, we prove two large number laws and a central limit theorem of the occupation times of the process.

	Let $\mathbb{N}=\{0,1,2,\ldots\}$. Let $\mathfrak{M}(0,\infty)$ be the set of finite Borel measures on $(0,\infty)$ with the weak convergence topology. Let $\mathfrak{D}(0,\infty)$ be the set of bounded positive right-continuous increasing functions $f$ on $\mathbb{R}$ satisfying $f(x)=0$ for $x\le 0$. We identify $\nu\in\mathfrak{M}(0,\infty)$ with its distribution function $\nu\in\mathfrak{D}(0,\infty)$ defined by $\nu(x)=\nu(0, x]$ for $x>0$. Let $\mathfrak{N}(0,\infty)$ be the subset of $\mathfrak{M}(0,\infty)$ consisting of integer-valued measures. Let $\mathscr{B}(0,\infty)$ denote the Borel $\sigma$-algebra on $(0,\infty)$. Let $B(0,\infty)$ be the Banach space of bounded Borel functions on $(0,\infty)$ furnished with the supremum norm $\|\cdot\|$. Let $C(0,\infty)$ be the set of continuous functions in $B(0,\infty)$. Let $C^{1}(0,\infty)$ be the set of functions in $C(0,\infty)$ with bounded continuous derivatives of the first order. We use the superscript ``+'' to denote the subsets of positive elements and use the subscript ``0'' to denote the subsets of functions vanishing on $\{0\}\cup\{+\infty\}$, e.g., $B(0,\infty)^{+}$, $C_{0}(0,\infty)^{+}$, etc. For any function $f$ on $A\subset\mathbb{R}$, we understand that $f(x)=0$ for $x\in\mathbb{R}\backslash A$ by convention. For any $f\in B(0,\infty)$ and $\nu\in\mathfrak{M}(0,\infty)$ write $\langle\nu,f\rangle= \int_{(0,\infty)}f(x)\nu(d x)$. In the integrals, we make the convention that, for $a\le b\in\mathbb{R}$,
	\begin{align*}
		\int_{a}^{b}=\int_{(a, b]}\quad \text { and } \quad \int_{a}^{\infty}=\int_{(a, \infty)}.
	\end{align*}

	\section{An age-structured branching process}\label{section2}
	
	Let $\alpha\in C^{1}(0,\infty)^{+}$. For each $x\in (0,\infty)$, let $\{p(x, i):i\in\mathbb{N}\}$ be a discrete probability distribution with generating function
	\begin{align*}
		g(x,z)=\sum\limits_{i=0}^{\infty}p(x,i) z^{i}, \quad z \in[0,1].
	\end{align*}
	Then for any $x\in(0,\infty)$ let $g'(x,z)=\frac{\partial}{\partial z} g(x,z)$ and $g''(x,z)=\frac{\partial^2}{\partial z^2} g(x,z)$ denote respectively the first and second derivative with respect to $z$ of $g(x,z)$. Throughout the paper, we assume that $p(\cdot,i) \in C^{1}(0,\infty)^{+}$ for every $i \in \mathbb{N}$ and
	\begin{align}\label{2.1}
		\|g'(\cdot, 1-)\|=\sup _{x>0} \sum_{i=1}^{\infty} p(x, i) i<\infty,
	\end{align}
	Then we have $g(\cdot, z)\in C^{1}(0,\infty)^{+}$ for each $z\in [0,1]$. A branching particle system is characterized by the following properties:
	
	\begin{description}
		
		\item[\textmd{(i)}] The remaining lifetimes of the particles decrease at the unit speed, i.e., they move according to realizations of the deterministic process $\xi=(\xi_{t}\vee 0)_{t \ge 0}$ in $(0,\infty)$ defined by $\xi_{t}=\xi_{0}-t$.
		
		\item[\textmd{(ii)}] A particle gives birth to offspring during its life. For a particle which is alive at time $r \ge 0$ with remaining lifetime $x>t-r>0$, the conditional probability of having not given birth at time $t$ is $\exp\{-\int_{0}^{t-r} \alpha(x-s) d s\}$.
		
		\item[\textmd{(iii)}] When a particle gives birth at remaining lifetime $x > 0$, it firstly gives birth to a random number of offspring according to the probability law $\{p(x, i): i\in \mathbb{N}\}$ determined by the generating function $g(x, \cdot)$, those offspring then choose their life-lengths in $(0,\infty)$ independently of each other according to the continuous probability distribution $G(d t)$.
		
	\end{description}
	
	We assume that the lifetimes and the offspring reproductions of different particles are independent. Let $X_{t}(B)$ denote the number of particles alive at time $t \ge 0$ with remaining lifetimes belonging to the Borel set $B \subset (0,\infty)$. If we assume $X_{0}((0, \infty))<\infty$, then $(X_{t}: t\ge 0)$ is a Markov process with state space $\mathfrak{N}(0,\infty)$. We refer to Li \cite[Section 4.3]{Li22} for the formulation of general branching particle systems.
	
	Let $\sigma \in \mathfrak{N}(0,\infty)$ and let $(X_{t}^{\sigma}: t \ge 0)$ be the above system with initial value $X_{0}^{\sigma}=\sigma$. Let $\delta_{x}$ denote the unit measure concentrated at $x \in (0,\infty)$. Suppose that the process is defined on a probability space $(\Omega, \mathscr{G}, \mathbf{P})$. Notice that the remaining lifetime of a living particle must be greater than $0$, then the above properties imply that
	\begin{align}\label{2.2}
		\mathbf{E}[\exp \{-\langle X_{t}^{\sigma}, f\rangle\}]=\exp \{-\langle\sigma, u_{t} f\rangle\}, \quad f \in B(0, \infty)^{+},
	\end{align}
	where
	\begin{align*}
		u_{t} f(x)=-\log \mathbf{E}\big[\!\exp \{-\langle X_{t}^{\delta_{x}}, f\rangle\}\big].
	\end{align*}
	From properties (i), (ii) and (iii) we derive as in Kaj and Sagitov \cite[Section 3]{Kaj98} and Li \cite[Section 4.3]{Li22} the following renewal equation:
	\begin{align}\label{2.3}
		e^{-u_{t} f(x)}=\ &e^{-f(x-t)-\int_{0}^{t} \alpha(x-s) d s} \nonumber\\
		&+\int_{0}^{t} \alpha(x-s) e^{-\int_{0}^{s} \alpha(x-r) d r-u_{t-s} f(x-s)} g(x-s,\langle G, e^{-u_{t-s} f}\rangle)d s,
	\end{align}
	where $f \in B(0,\infty)^{+}$. This follows as we think about the Laplace functional of the random measure $X_{t}^{\delta_{x}}$ produced by a single particle having life-length $x \in (0,\infty)$. We assume the first offsprings in the particle system are born at time $\eta$. In view of (\ref{2.2}), the Laplace functional of $X_{t}^{\delta_{x}}$ is given by the left-hand side of (\ref{2.3}). We first consider the case where $\eta > t$, i.e., the initial particle has not given birth at time $t$. By the Markov property of $(X_{t}^{\sigma}: t \ge 0)$ we have
	\begin{align*}
		\mathbf{P}\big[1_{\{\eta>t\}} e^{-\langle X_{t}^{\delta_{x}},f\rangle}\big]
		&=\mathbf{P}\big[1_{\{\eta>t\}} e^{-\langle X_{t}^{\delta_{x}},f\rangle}\big]1_{\{x>t\}}\\
		&=\mathbf{P}\big[1_{\{\eta>t\}} \mathbf{P}\big[ e^{-\langle X_{t}^{\delta_{x}},f\rangle}\big|\mathscr{G}_t \big]\big] 1_{\{x>t\}}\\
		&=\mathbf{P}\big[1_{\{\eta>t\}} \mathbf{P}\big[e^{-\langle \delta_{x-t},f\rangle}\big]\big] 1_{\{x>t\}}\\
		&=\mathbf{P}\big[e^{-\langle \delta_{x-t},f\rangle} 1_{\{x>t\}}\big]\mathbf{P}[\eta>t] \\
		&=e^{-f(x-t)-\int_{0}^{t} \alpha(x-s) d s},
	\end{align*}
	which gives the first term on the right-hand side of (\ref{2.3}). By property (iii), if it happens that $0< \eta \le t$, i.e., the initial particle gives birth at remaining lifetime $x-\eta >0$, it firstly produces a random number of offspring according to the probability law $\{p(x-\eta, i): i\in \mathbb{N}\}$ determined by the generating function $g(x-\eta, \cdot)$, those offspring then choose their life-lengths in $(0,\infty)$ independently of each other according to the probability distribution $G(d t)$. Notice that the lifetimes and the offspring reproductions of different particles are independent. With those considerations we compute
	\begin{align*}
		\mathbf{P}\big(1_{\{0<\eta\le t\}} &e^{-\langle X_{t}^{\delta_{x}},f\rangle}\big)
		=
		\mathbf{P}\big[1_{\{0<\eta\le t\}} \mathbf{P}\big(e^{-\langle X_{t}^{\delta_{x}},f\rangle}\big|\mathscr{G}_{\eta} \big)\big]\\
		&=
		\mathbf{P}\Big\{1_{\{0<\eta\le t\}} \sum\limits_{i=0}^{\infty}p(x-\eta,i)\Big[\int_{0}^{\infty}\mathbf{P}(e^{-\langle X_{t-\eta}^{\delta_{y}},f\rangle})G(dy)\Big]^{i}\mathbf{P}\big(e^{-\langle X_{t-\eta}^{\delta_{x-\eta}},f\rangle}\big)\Big\}\\
		&=
		\mathbf{P}\Big[1_{\{0<\eta\le t\}} \sum\limits_{i=0}^{\infty}p(x-\eta,i)\langle G,e^{-u_{t-\eta}f}\rangle^{i}e^{-u_{t-\eta}f(x-\eta)}\Big]\\
		&=
		\mathbf{P}\big[1_{\{0<\eta\le t\}} g(x-\eta,\langle G,e^{-u_{t-\eta}f}\rangle) e^{-u_{t-\eta}f(x-\eta)}\big]\\
		&=
		\int_{0}^{t} \alpha(x-s) e^{-\int_{0}^{s} \alpha(x-r) d r-u_{t-s} f(x-s)} g(x-s,\langle G, e^{-u_{t-s} f}\rangle) d s,
	\end{align*}
	which leads to the second term on the right-hand side of (\ref{2.3}). Furthermore, by Li \cite[Proposition 2.9]{Li22}, the above equation implies
	\begin{align}\label{2.4}
		e^{-u_{t} f(x)}=e^{-f(x-t)}+\int_{0}^{t}\big[g(x-s,\langle G, e^{-u_{t-s} f}\rangle)-1\big] e^{-u_{t-s} f(x-s)} \alpha(x-s) d s,
	\end{align}
	where $f \in B(0, \infty)^{+}$. The uniqueness of the solution to (\ref{2.3}) and (\ref{2.4}) follows by Gronwall's inequality. Then the two equations are equivalent.
	
	We call any Markov process $(X_{t}: t \ge 0)$ with state space $\mathfrak{N}(0,\infty)$ a \textit{remaining-lifetime age-structured branching process with parameters $(\alpha, g, G)$} or simply an \textit{$(\alpha,g,G)$-process} if it has transition semigroup $(Q_{t})_{t \ge 0}$ defined by
	\begin{align}\label{2.5}
		\int_{\mathfrak{N}(0,\infty)} e^{-\langle\nu, f\rangle} Q_{t}(\sigma, d \nu)=\exp \{-\langle\sigma, u_{t} f\rangle\}, \quad f \in B(0,\infty)^{+},
	\end{align}
	where $u_{t} f(x)$ is the unique solution to (\ref{2.4}). For any $t \ge 0$ let $T_{t}$ be the operator on the Banach space $C_{0}(0,\infty)$ defined by $T_{t} f(x)=f(x-t)$. For any $f \in B(0, \infty)^{+}$, by (\ref{2.4}) it is easy to see that $U_{t} f(x)=1-e^{-u_{t} f(x)}$ is the unique bounded positive solution to the evolution integral equation
	\begin{align}\label{2.6}
		U_{t} f(x)=T_{t} U_{0} f(x)-\int_{0}^{t} T_{t-s} \phi(\cdot, U_{s} f)(x) d s,
	\end{align}
	where
	\begin{align*}
		\phi(x, u)=\alpha(x)\big[g(x,\langle G, 1-u\rangle)-1\big][1-u(x)].
	\end{align*}
	
	\begin{proposition}\label{prop2.1} For any $f\in B(0,\infty)^{+}$, the map $(t,x)\mapsto u_t f(x)$ is the unique solution to
		\begin{align}\label{2.7'}
			u_{t}f(x)=f(x-t)+\int_{0}^{t} \alpha(x-s)\big[1-g(x-s,\langle G,e^{-u_{t-s} f}\rangle)\big]ds,
		\end{align}
		which is equivalent to the evolution equation (\ref{2.4}).
	\end{proposition}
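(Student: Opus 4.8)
\quad The plan is to show that (\ref{2.4}) and (\ref{2.7'}) are equivalent integral equations; granting this, the proposition follows at once, since it was already observed that $e^{-u_tf}$ solves (\ref{2.4}) and that the solution of (\ref{2.4}) is unique by Gronwall's inequality.

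First I would pass from (\ref{2.4}) to (\ref{2.7'}) by arguing along characteristics. Fix $t\ge 0$ and $x\in(0,\infty)$, and for $\tau\in[0,t]$ put $v_\tau=e^{-u_\tau f}$ and $\phi(\tau)=v_\tau(x-t+\tau)$, with the convention $\phi(\tau)=1$ whenever $x-t+\tau\le 0$. Replacing $(t,x)$ by $(\tau,x-t+\tau)$ in (\ref{2.4}) and then substituting $r=\tau-s$ in the integral --- after which the argument $x-t$ of $f$ no longer involves the integration variable --- one obtains
\begin{align*}
	\phi(\tau)=e^{-f(x-t)}+\int_{0}^{\tau}\alpha(x-t+r)\big[g(x-t+r,\langle G,v_r\rangle)-1\big]\phi(r)\,dr.
\end{align*}
The integrand here is bounded in absolute value by $\|\alpha\|\,\|g'(\cdot,1-)\|$, using $1-g(y,z)\le\|g'(\cdot,1-)\|(1-z)$ for $z\in[0,1]$ together with $\phi(r)\in(0,1]$, and it is measurable in $r$, since $(r,y)\mapsto u_rf(y)$ is jointly measurable and hence $r\mapsto\langle G,v_r\rangle$ is measurable by Fubini's theorem. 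Consequently $\phi$ is Lipschitz on $[0,t]$ and satisfies the linear equation $\phi'(\tau)=\alpha(x-t+\tau)[g(x-t+\tau,\langle G,v_\tau\rangle)-1]\phi(\tau)$ for almost every $\tau$. Because $u_\tau f$ is bounded, $\phi$ stays bounded away from $0$, so $\psi(\tau):=-\log\phi(\tau)=u_\tau f(x-t+\tau)$ is absolutely continuous with $\psi'(\tau)=\alpha(x-t+\tau)[1-g(x-t+\tau,\langle G,v_\tau\rangle)]$ a.e. Integrating over $[0,t]$, using $\psi(0)=f(x-t)$ and $\psi(t)=u_tf(x)$, and finally substituting $s=t-\tau$, gives precisely (\ref{2.7'}). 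Running the same computation backwards --- start with a bounded solution $(w_\tau)$ of (\ref{2.7'}), check along characteristics that $\tau\mapsto w_\tau(x-t+\tau)$ is absolutely continuous, form its exponential, verify it solves the above linear ODE, and integrate --- shows conversely that $e^{-w_t(x)}$ solves (\ref{2.4}). This establishes the equivalence.

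For the uniqueness part of the statement I would either invoke the equivalence just proved together with the uniqueness for (\ref{2.4}), or argue directly: if $w^1,w^2$ are bounded solutions of (\ref{2.7'}), then from (\ref{2.7'}), the bound $|g(y,z_1)-g(y,z_2)|\le\|g'(\cdot,1-)\|\,|z_1-z_2|$ and $|\langle G,e^{-w^1_r}\rangle-\langle G,e^{-w^2_r}\rangle|\le\langle G,|e^{-w^1_r}-e^{-w^2_r}|\rangle\le\|w^1_r-w^2_r\|$ (since $G$ is a probability measure and $|e^{-a}-e^{-b}|\le|a-b|$ for $a,b\ge 0$), one gets $\|w^1_t-w^2_t\|\le\|\alpha\|\,\|g'(\cdot,1-)\|\int_{0}^{t}\|w^1_{t-s}-w^2_{t-s}\|\,ds$, so $w^1=w^2$ by Gronwall's inequality.

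The step I expect to be the main obstacle is making the differentiation along characteristics rigorous: establishing the absolute continuity of $\phi$ (hence the almost-everywhere ODE) and the passage to $\psi=-\log\phi$, which rests on the boundedness and joint measurability of $(t,x)\mapsto u_tf(x)$ --- properties coming from the construction of the semigroup rather than from (\ref{2.4}) itself --- and on handling the boundary region $x\le t$, where the conventions that $\alpha$ and $f$ vanish off $(0,\infty)$ must be invoked to keep the manipulation consistent.
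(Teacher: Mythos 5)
Your argument is correct, and it reaches the conclusion by a genuinely different route from the paper. The paper first restricts to $f\in C_0^1(0,\infty)^{+}$, shows $t\mapsto U_tf(x)$ is continuously differentiable, derives the pointwise transport PDE $\partial_t u_tf=-\partial_x u_tf+\alpha(x)[1-g(x,\langle G,e^{-u_tf}\rangle)]$, integrates it along the characteristic $s\mapsto(s,x-t+s)$, and only then extends to general $f\in B(0,\infty)^{+}$ by an explicit bump-function approximation of step functions combined with dominated and monotone convergence through the Laplace functional (\ref{2.5}). You instead stay at the level of the integral equation: evaluating (\ref{2.4}) at $(\tau,x-t+\tau)$ and changing variables turns it into a linear Volterra equation for $\phi(\tau)=e^{-u_\tau f(x-t+\tau)}$ with bounded measurable integrand, so $\phi$ is Lipschitz, the logarithm is legitimate because $\phi$ is bounded away from zero (this is exactly the lower bound of Proposition \ref{prop2.3}), and integrating the a.e.\ derivative of $-\log\phi$ gives (\ref{2.7'}) directly for every bounded $f$. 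This buys you a proof that needs no smoothness of $f$ and no approximation step, and it also supplies the reverse implication and a self-contained Gronwall uniqueness argument, both of which the paper only asserts. What it does not give you is the classical PDE (\ref{2.7}) for smooth $f$, which the paper's route establishes as a by-product and then reuses in the proof of Theorem \ref{th3.8}; so if your proof were substituted for the paper's, (\ref{2.7}) would still have to be derived separately. The measurability and boundedness of $(t,x)\mapsto u_tf(x)$ that you lean on are indeed inputs from the construction of $u_tf$ rather than consequences of (\ref{2.4}) alone, but the paper already presupposes them in order for the integrals in (\ref{2.3})--(\ref{2.4}) to make sense, so this is not a gap.
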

	
	\begin{proof}
		We first assume $f \in C_{0}^{1}(0, \infty)^{+}$. In this case, it is easy to see that $\phi(\cdot, f)\in C_{0}^{1}(0,\infty)$. Then $t\mapsto \int_{0}^{t} \phi(x-t+s, U_{s}f)ds$ is belong to $C_{0}^{1}(0,\infty)$. Then $t \mapsto U_{t} f(x)$ is continuously differentiable and solves the differential evolution equation
		\begin{align*}
			\frac{\partial}{\partial t} U_{t} f(x)=-\frac{\partial}{\partial x} U_{t} f(x)-\phi(x, U_{t} f), \quad U_{0} f(x)=1-e^{-f(x)}.
		\end{align*}
		Then $t \mapsto u_{t} f(x)$ is also continuously differentiable. By differentiating both sides of (\ref{2.4}) we have
		\begin{align*}
			e^{-u_{t} f(x)}\frac{\partial}{\partial t} u_{t} f(x)=-e^{-u_{t} f(x)}\frac{\partial}{\partial x} u_{t} f(x)+\alpha(x)e^{-u_{t} f(x)}\big[1-g(x,\langle G, e^{-u_{t}f}\rangle)\big],
		\end{align*}
		it follows that for any $x>0,\ t \ge 0$ and $f \in C_{0}^{1}(0, \infty)^{+}$ we have
		\begin{align}\label{2.7}
			\frac{\partial}{\partial t} u_{t} f(x)=-\frac{\partial}{\partial x} u_{t} f(x)+\alpha(x)\big[1-g(x,\langle G, e^{-u_{t}f}\rangle)\big].
		\end{align}
		Then for any fixed $t\ge 0$ and $x\in (0,\infty)$ we have
		\begin{small}
			\begin{align*}
				\frac{d}{ds} u_sf(x-&t+s)
				=\frac{\partial}{\partial s}u_{s}f(y)\Big |_{y=x-t+s}+\frac{\partial}{\partial s}u_{y}f(x-t+s)\Big|_{y=s}\\
				=&-\frac{\partial}{\partial y}u_{s}f(y)\Big|_{y=x-t+s}+\alpha(y)\big[1-g(y,\langle G, e^{-u_{s}f}\rangle)\big]\Big|_{y=x-t+s}+\frac{\partial}{\partial s}u_{y}f(x-t+s)\Big|_{y=s}\\
				=&\alpha(x-t+s)\big[1-g(x-t+s,\langle G, e^{-u_{s}f}\rangle)\big],\quad 0\le s\le t.
			\end{align*}
		\end{small}By integrating both sides of the above equation over $[0, t]$ we get (\ref{2.7'}) for $f \in C_{0}^{1}(0, \infty)^{+}$. To complete the proof, we only need to show that (\ref{2.7'}) also holds for any
		\begin{align}\label{2.7''}
			f(x)= \begin{cases}
				b_1 & \text{ if } x\in (0,x_1],\\
				b_2 & \text { if } x \in (x_1, x_2],\\
				b_3 & \text { if } x\in (x_2,\infty),
			\end{cases}
		\end{align}
		where $b_2\in [0,\infty)$, $b_1, b_3\in [0,b_2]$ and $x_1, x_2 \in (0,\infty)$. Indeed, let $x_3=(x_2+\frac{1}{n})\vee n$ and
		\begin{align}\label{2.7'''}
			f_{n}(x)= \begin{cases}
				b_{1}e^{1-\frac{1}{1-(nx-1)^2}} & \text { if } x \in(0, \frac{1}{n}),\\
				b_1 & \text { if } x\in [\frac{1}{n},x_1],\\
				b_1+(b_2-b_1)e^{1-\frac{1}{1-(n(x-x_1)-1)^2}} & \text { if } x \in(x_1, x_1 +\frac{1}{n}),\\
				b_2 & \text { if } x\in [x_1+\frac{1}{n},x_2],\\
				b_3+(b_2-b_3)e^{1-\frac{1}{1-(x_2-x)^2 n^2}} & \text { if } x\in (x_2,x_2+\frac{1}{n}],\\
				b_3 & \text { if } x\in (x_2+\frac{1}{n},x_3],\\
				b_{3}e^{1-\frac{1}{1-(x_3-x)^2 n^2}} & \text { if } x\in (x_3,x_3+\frac{1}{n}],\\
				0 & \text { if } x\in (x_3+\frac{1}{n},\infty),
			\end{cases}
		\end{align}
		for $x\in (0, \infty)$ and $n\ge 1$. According to the definition, we have $f_{n}\in C_{0}^{1}(0, \infty)^{+}$ for any $n\ge 1$. It is not difficult to see that $f_n \rightarrow f$ as $n \rightarrow \infty$. Notice that for any $t\ge 0$, by letting $\sigma=\delta_x$ in (\ref{2.5}) and using dominated convergence we have $u_{t}f_n \rightarrow u_{t}f$ as $n \rightarrow \infty$. Thus (\ref{2.7'}) holds for any $f$ given as (\ref{2.7''}) by dominated convergence. Naturally, (\ref{2.7'}) also holds for any simple function $f \in B(0, \infty)^{+}$ in similar ways. The extension to $f \in B(0, \infty)^{+}$ is immediate by monotone convergence. The uniqueness of the solution to (\ref{2.7'}) follows by Gronwall's inequality. Then equations (\ref{2.4}) and (\ref{2.7'}) are equivalent.
		$\hfill\square$
	\end{proof}
	
	\begin{proposition}\label{prop2.2}
		For any $t \ge 0$ and $\sigma \in \mathfrak{N}(0, \infty)$ we have
		\begin{align}\label{2.8}
			\int_{\mathfrak{N}(0,\infty)}\langle\nu, f\rangle Q_{t}(\sigma, d \nu)=\langle\sigma, \pi_{t} f\rangle, \quad f \in B(0,\infty),
		\end{align}
		where $(\pi_{t})_{t \ge 0}$ is the semigroup of bounded kernels on $(0,\infty)$ defined by
		\begin{align}\label{2.9}
			\pi_{t} f(x)=f(x-t)+\int_{0}^{t} \alpha(x-s) g'(x-s,1-) \langle G,\pi_{t-s}f\rangle ds.
		\end{align}
	\end{proposition}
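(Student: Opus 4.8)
The plan is to obtain (\ref{2.8}) by differentiating the Laplace functional identity (\ref{2.5}) at the origin. For $\theta>0$, applying (\ref{2.5}) with $\theta f$ in place of $f$ gives $\mathbf{E}[e^{-\theta\langle X_t^\sigma,f\rangle}]=e^{-\langle\sigma,u_t(\theta f)\rangle}$, and since $\theta^{-1}(1-e^{-\theta a})$ increases to $a$ as $\theta\downarrow0$, monotone convergence yields
$$\mathbf{E}[\langle X_t^\sigma,f\rangle]=\lim_{\theta\downarrow0}\theta^{-1}\big(1-e^{-\langle\sigma,u_t(\theta f)\rangle}\big).$$
Since $\sigma$ is a finite sum of unit masses, once we know $u_t(\theta f)(x)=O(\theta)$ pointwise the quadratic remainder in $e^{-\langle\sigma,\cdot\rangle}$ is $O(\theta)$ and the right-hand side equals $\langle\sigma,\lim_{\theta\downarrow0}\theta^{-1}u_t(\theta f)\rangle$. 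Hence the proposition reduces to three points: $(\mathrm{a})$ equation (\ref{2.9}) has a unique solution $\pi_tf$ bounded locally in $t$, which is a bounded kernel on $(0,\infty)$; $(\mathrm{b})$ $(\pi_t)_{t\ge0}$ is a semigroup; $(\mathrm{c})$ $\theta^{-1}u_t(\theta f)(x)\to\pi_tf(x)$ for each fixed $t$ and $x$.

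Point $(\mathrm{a})$ is routine: $\alpha$ is bounded, $\|g'(\cdot,1-)\|<\infty$ by (\ref{2.1}), and $G$ is a probability measure, so (\ref{2.9}) is a linear Volterra equation solved by Picard iteration, with $\|\pi_tf\|\le\|f\|e^{ct}$ for $c:=\|\alpha\|\,\|g'(\cdot,1-)\|$, uniqueness following from Gronwall's inequality; $\pi_t$ is clearly linear and positivity preserving in $f$, hence a bounded kernel. Point $(\mathrm{b})$ I would verify only at the end: once (\ref{2.8}) is available, the Markov property gives, for $\sigma=\delta_x$,
$$\pi_{s+t}f(x)=\mathbf{E}\langle X_{s+t}^{\delta_x},f\rangle=\mathbf{E}\big[\mathbf{E}[\langle X_{s+t}^{\delta_x},f\rangle\mid\mathscr{G}_s]\big]=\mathbf{E}\langle X_s^{\delta_x},\pi_tf\rangle=\pi_s\pi_tf(x),$$
and $x$ being arbitrary, $\pi_{s+t}=\pi_s\pi_t$ on $B(0,\infty)^+$, hence on $B(0,\infty)$.

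The substance is point $(\mathrm{c})$, which I would handle in two stages. Stage one is an a priori bound: from the renewal equation (\ref{2.4}) and the convexity estimate $0\le 1-g(x,z)\le g'(x,1-)(1-z)$, Gronwall's inequality gives $\|1-e^{-u_t(\theta f)}\|\le(1-e^{-\theta\|f\|})e^{ct}$, so $w_t^\theta:=\theta^{-1}u_t(\theta f)$ is bounded uniformly over small $\theta$ on compact $t$-intervals (this also makes the reduction of the first paragraph rigorous). Stage two starts from equation (\ref{2.7'}) of Proposition \ref{prop2.1}; writing $z_r^\theta:=\langle G,e^{-u_r(\theta f)}\rangle\to1$ and $\beta^\theta(r):=\langle G,\theta^{-1}(1-e^{-u_r(\theta f)})\rangle$, one sandwiches $\theta^{-1}(1-g(x-s,z_{t-s}^\theta))$ between $g'(x-s,z_{t-s}^\theta)\beta^\theta(t-s)$ and $g'(x-s,1-)\beta^\theta(t-s)$ and, using $g'(x,z)\uparrow g'(x,1-)$ as $z\uparrow1$ with the Stage-one bound and dominated convergence, rewrites (\ref{2.7'}) as
$$w_t^\theta(x)=f(x-t)+\int_0^t\alpha(x-s)\,g'(x-s,1-)\,\beta^\theta(t-s)\,ds+\varepsilon_t^\theta(x),\qquad\varepsilon_t^\theta(x)\to0.$$
Integrating against $G$ decouples the spatial variable, since the equation couples different $x$ only through $\langle G,\cdot\rangle$: the scalar $A^\theta(t):=\langle G,w_t^\theta\rangle$ solves a linear renewal equation which, as $\beta^\theta(r)=A^\theta(r)+O(\theta)$, agrees up to a vanishing term with the one obtained by integrating (\ref{2.9}) against $G$. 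Expanding in the geometrically convergent Neumann series and passing to the limit term by term gives $A^\theta(t)\to\langle G,\pi_tf\rangle$, hence $\beta^\theta(t)\to\langle G,\pi_tf\rangle$; feeding this back into the displayed identity yields $w_t^\theta(x)\to\pi_tf(x)$ for every $x$, and then (\ref{2.8}) extends from $B(0,\infty)^+$ to $B(0,\infty)$ by linearity.

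The main difficulty is precisely this interchange of $\lim_{\theta\downarrow0}$ with the integral in (\ref{2.7'}): a direct Gronwall estimate on $\sup_x|w_t^\theta(x)-\pi_tf(x)|$ fails because $g'(x,z)\to g'(x,1-)$ as $z\uparrow1$ need not be uniform in $x$ under hypothesis (\ref{2.1}) alone, and what saves the argument is to integrate against $G$ first and run the estimates on the scalar $\langle G,w_t^\theta\rangle$. An alternative is to prove (\ref{2.8}) first for $f\in C_0^1(0,\infty)^+$ by differentiating the evolution equation (\ref{2.6})--(\ref{2.7}) in the scaling parameter --- the linearisation of $\phi$ at $0$ in a direction $v$ being $\alpha(\cdot)g'(\cdot,1-)\langle G,v\rangle$, which reproduces (\ref{2.9}) --- and then to extend to $f\in B(0,\infty)^+$ by the approximation scheme of the proof of Proposition \ref{prop2.1}.
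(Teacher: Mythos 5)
Your proposal is correct and follows essentially the same route as the paper: both identify $\pi_t f$ as the derivative of $\theta\mapsto u_t(\theta f)$ at $\theta=0$, obtained by differentiating the Laplace functional (\ref{2.5}) together with the renewal equation (\ref{2.7'}) and recognizing the resulting linear equation as (\ref{2.9}). The paper carries out this differentiation formally and simply ``lets $\theta\to 0$'', whereas you replace the derivative by a difference quotient and justify the limit interchange (via the a priori bound on $\theta^{-1}u_t(\theta f)$ and the scalar renewal equation for $\langle G,\theta^{-1}u_t(\theta f)\rangle$) --- a welcome tightening of the same argument rather than a different method.
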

	
	\begin{proof}
		The existence and uniqueness of the locally bounded solution to (\ref{2.9}) follows by a general result; see, e.g., Li \cite[Lemma 2.17]{Li22}. For $f \in B(0,\infty)^{+}$ we can use (\ref{2.7'}) to see that
		\begin{small}
			\begin{align*}
				\frac{\partial}{\partial \theta} u_{t}(\theta f)(x)
				=f(x-t)+\int_{0}^{t} \alpha(x-s) g'(x-s,\langle G,e^{-u_{t-s}(\theta f)}\rangle) \big\langle G, \frac{\partial}{\partial \theta} u_{t-s}(\theta f)\cdot e^{-u_{t-s}(\theta f)}\big\rangle ds.
			\end{align*}
		\end{small}By letting $\theta\rightarrow 0$ we see that the unique solution of (\ref{2.9}) is given by $\pi_{t} f(x)=\frac{\partial}{\partial \theta} u_{t}(\theta f)(x)\big|_{\theta=0}$. By differentiating both sides of (\ref{2.5}) we get (\ref{2.8}). The extension to $f \in B(0,\infty)$ is immediate by linearity.
		$\hfill\square$
	\end{proof}

	\begin{proposition}\label{prop2.3} Let $\|\alpha\|=\sup _{y \ge 0} \alpha(y)$. Then for any $t \ge 0$ and $x>0$ we have
		\begin{align}\label{2.10}
			\pi_{t} f(x) \ge u_{t} f(x) \ge(1-e^{-f(x-t)}) e^{-\|\alpha\| t}, \quad f \in B(0, \infty)^{+}.
		\end{align}
	\end{proposition}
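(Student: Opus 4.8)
The statement consists of two essentially independent estimates, and the plan is to deduce each of them as a soft consequence of what has already been established, without returning to the renewal equations.

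For the upper bound $\pi_{t}f(x)\ge u_{t}f(x)$ I would argue probabilistically. Fix $f\in B(0,\infty)^{+}$ and $x>0$. By Proposition \ref{prop2.2} with $\sigma=\delta_{x}$ we have $\mathbf{E}[\langle X_{t}^{\delta_{x}},f\rangle]=\pi_{t}f(x)$, which is finite because $\pi_{t}f\le\|f\|\pi_{t}1$ and the kernels $\pi_{t}$ are bounded. Since $r\mapsto-\log r$ is convex on $(0,\infty)$, Jensen's inequality applied to $e^{-\langle X_{t}^{\delta_{x}},f\rangle}\in(0,1]$ gives
\[
u_{t}f(x)=-\log\mathbf{E}\big[e^{-\langle X_{t}^{\delta_{x}},f\rangle}\big]\le\mathbf{E}\big[\langle X_{t}^{\delta_{x}},f\rangle\big]=\pi_{t}f(x).
\]
A purely analytic route also works: convexity of $g(x,\cdot)$ with $g(x,1)=1$ yields $1-g(x,z)\le g'(x,1-)(1-z)$ for $z\in[0,1]$, and with $z=\langle G,e^{-u_{t-s}f}\rangle$ one has $1-z=\langle G,1-e^{-u_{t-s}f}\rangle\le\langle G,u_{t-s}f\rangle$; subtracting (\ref{2.7'}) from (\ref{2.9}) then shows $w_{t}:=\pi_{t}f-u_{t}f$ obeys $w_{t}(x)\ge\int_{0}^{t}\alpha(x-s)g'(x-s,1-)\langle G,w_{t-s}\rangle\,ds$, and since the kernel is nonnegative and bounded and $w$ is locally bounded, Gronwall's inequality gives $w_{t}\ge0$.

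For the lower bound I would first pass to $U_{t}f=1-e^{-u_{t}f}$. Because $u_{t}f(x)\ge0$ and $y\ge1-e^{-y}$ for all $y\ge0$, we get $u_{t}f(x)\ge1-e^{-u_{t}f(x)}=U_{t}f(x)$, so it suffices to bound $U_{t}f(x)$ below. I use (\ref{2.6}), $U_{t}f(x)=T_{t}U_{0}f(x)-\int_{0}^{t}T_{t-s}\phi(\cdot,U_{s}f)(x)\,ds$, and note that the integrand is nonpositive: for $s\le t$ the function $U_{s}f$ takes values in $[0,1)$, so $1-U_{s}f(y)=e^{-u_{s}f(y)}\in(0,1]$, whence $\langle G,1-U_{s}f\rangle\in(0,1]$ as $G$ is a probability measure; since $g(x,\cdot)$ is a probability generating function, $g(x,\langle G,1-U_{s}f\rangle)\le1$ and therefore $\phi(x,U_{s}f)=\alpha(x)[g(x,\langle G,1-U_{s}f\rangle)-1][1-U_{s}f(x)]\le0$. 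Hence $-\int_{0}^{t}T_{t-s}\phi(\cdot,U_{s}f)(x)\,ds\ge0$ and
\[
U_{t}f(x)\ge T_{t}U_{0}f(x)=1-e^{-f(x-t)}\ge(1-e^{-f(x-t)})e^{-\|\alpha\|t},
\]
where the last step only uses $e^{-\|\alpha\|t}\le1$. Together with $u_{t}f(x)\ge U_{t}f(x)$ this is the claimed bound.

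I do not anticipate a genuine obstacle; both estimates rest on facts already at hand. The only points needing a little care are the finiteness of $\mathbf{E}[\langle X_{t}^{\delta_{x}},f\rangle]$ (required for Jensen, guaranteed by (\ref{2.1}) via Proposition \ref{prop2.2}) and the sign of $\phi(\cdot,U_{s}f)$, which follows at once from $G$ being a probability measure and $g(x,\cdot)$ a probability generating function. The argument in fact gives the sharper bound $u_{t}f(x)\ge1-e^{-f(x-t)}$; the harmless extra factor $e^{-\|\alpha\|t}$ in the statement would also drop out of a cruder comparison of $U_{t}f$ with the solution of $\partial_{t}v=-\partial_{x}v-\|\alpha\|v$, $v_{0}=U_{0}f$, which is $v_{t}(x)=e^{-\|\alpha\|t}(1-e^{-f(x-t)})$.
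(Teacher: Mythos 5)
Your proof is correct, and the first inequality is obtained exactly as in the paper: take $\sigma=\delta_x$ in (\ref{2.5}) and (\ref{2.8}) and apply Jensen's inequality (your alternative Gronwall argument via the convexity bound $1-g(x,z)\le g'(x,1-)(1-z)$ is also sound, just not needed). For the lower bound you and the paper both start from the evolution equation (\ref{2.6}), but you diverge there: the paper invokes a comparison theorem to dominate $U_tf$ from below by the solution $\tilde U_tf(x)=(1-e^{-f(x-t)})\exp\{-\int_0^t\alpha(x-s)\,ds\}$ of the linearized equation with killing at rate $\alpha$, whereas you simply observe that $\phi(\cdot,U_sf)\le 0$ (because $g(x,\cdot)\le 1$ on $[0,1]$ and $G$ is a probability measure), so the integral term in (\ref{2.6}) is nonnegative and $U_tf(x)\ge T_tU_0f(x)=1-e^{-f(x-t)}$ outright. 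Your route is more elementary — it avoids appealing to an unspecified comparison theorem for this integral equation — and it yields the strictly sharper bound $u_tf(x)\ge 1-e^{-f(x-t)}$ with no factor $e^{-\|\alpha\|t}$; this is consistent with the model, since the initial particle with remaining lifetime $x>t$ is still alive at time $t$, so $\langle X_t^{\delta_x},f\rangle\ge f(x-t)$ almost surely. You also make explicit the step $u_tf\ge U_tf=1-e^{-u_tf}$, which the paper's proof uses only implicitly when passing from the bound on $U_tf$ to the stated bound on $u_tf$.
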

	
	\begin{proof} By taking $\sigma=\delta_{x}$ in both (\ref{2.5}) and (\ref{2.8}), with Jensen's inequality we get the first inequality in (\ref{2.10}). Let $U_{t} f(x)$ be the solution of (\ref{2.6}). By (\ref{2.6}) and a comparison theorem we have $U_{t} f(x) \ge \tilde{U}_{t} f(x)$, where $(t, x) \mapsto \tilde{U}_{t} f(x)$ solves
		\begin{align*}
			\tilde{U}_{t} f(x)=1-e^{-f(x-t)}-\int_{0}^{t} \alpha(x-s) \tilde{U}_{t-s} f(x-s) d s.
		\end{align*}
		The unique locally bounded solution to the above equation is given by
		\begin{align*}
			\tilde{U}_{t} f(x)=(1-e^{-f(x-t)}) \exp \Big\{\!-\!\int_{0}^{t} \alpha(x-s) d s\Big\}.
		\end{align*}
		Then we have the estimate (\ref{2.10}). $\hfill\square$ \end{proof}
	
	\begin{proposition}\label{prop3.11} 		
		For any $x>0$, $t\ge 0$ and $f\in B(0,\infty)^{+}$, there exists a Borel right process $\tilde{\xi}=(\Omega, \mathscr{F}, \mathscr{F}_{t}, \tilde{\xi}_t, \mathbf{P}_{x})$ in $(0,\infty)$ so that
		\begin{align}\label{3.19}
			\pi_{t}f(x)=\mathbf{P}_x \Big[f(\tilde{\xi}_t) \exp\Big\{\!\int_{0}^{t}\alpha(\tilde{\xi}_s)g'(\tilde{\xi}_s,1-)ds\Big\}\Big].
		\end{align}
	\end{proposition}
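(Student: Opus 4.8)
The plan is to read off from (\ref{2.9}) the Feynman--Kac structure of the first-moment semigroup $(\pi_{t})_{t\ge0}$ and to exhibit $\tilde\xi$ explicitly. Set $\beta(x):=\alpha(x)g'(x,1-)$ for $x\in(0,\infty)$ and $\beta(x):=0$ for $x\le0$. By (\ref{2.1}) and the boundedness of $\alpha$, $\beta$ is a bounded non-negative Borel function (it is even lower semicontinuous, being the increasing limit as $z\uparrow1$ of the continuous functions $x\mapsto\alpha(x)g'(x,z)$), and (\ref{2.9}) takes the form $\pi_{t}f(x)=f(x-t)+\int_{0}^{t}\beta(x-s)\langle G,\pi_{t-s}f\rangle\,ds$. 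This is precisely the kernel equation produced by conditioning on the first jump of the following motion: a particle whose position drifts down at unit speed, is sent to a cemetery $\partial$ when the current deterministic segment reaches $0$, jumps at rate $\beta$, and at each jump is resampled with law $G$ on $(0,\infty)$. The key observation is that here the jump rate and the Feynman--Kac exponent rate coincide (both equal $\beta$), which is exactly what will make the weight accrued before the first jump cancel the corresponding survival probability.

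So I would first construct $\tilde\xi$ on $(0,\infty)\cup\{\partial\}$ as the piecewise-deterministic Markov process just described: from $x$, follow the flow $r\mapsto x-r$, send the particle to $\partial$ if it reaches $0$ before the next jump, and in the meantime let jumps occur at rate $\beta$, each jump resampling the position independently from $G$. Since $\|\beta\|<\infty$ there are a.s.\ finitely many jumps on bounded time intervals, hence no explosion, and the standard construction (deterministic pieces interspersed with $G$-distributed jumps at the points of a $\beta$-thinned Poisson clock) produces a c\`adl\`ag strong Markov family $\tilde\xi=(\Omega,\mathscr{F},\mathscr{F}_{t},\tilde\xi_{t},\mathbf{P}_{x})$ in $(0,\infty)$; checking that it is a Borel right process is routine for PDMPs with bounded Borel jump rate.

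Next I would set $v_{t}f(x):=\mathbf{P}_{x}[f(\tilde\xi_{t})\exp\{\int_{0}^{t}\beta(\tilde\xi_{s})\,ds\}]$ for $f\in B(0,\infty)^{+}$, which is finite with $v_{t}f\le\|f\|e^{\|\beta\|t}$, and derive a renewal equation for it by conditioning on the first jump time $\tau_{1}$. Under $\mathbf{P}_{x}$, $\tau_{1}$ has density $s\mapsto\beta(x-s)\exp\{-\int_{0}^{s}\beta(x-r)\,dr\}$ on $(0,x)$ (the remaining mass corresponding to being killed before any jump), $\tilde\xi_{\tau_{1}}$ has law $G$ independent of $\tau_{1}$, and $\tilde\xi_{s}=x-s$ for $s<\tau_{1}$. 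On $\{\tau_{1}>t\}$ one has $\int_{0}^{t}\beta(\tilde\xi_{s})\,ds=\int_{0}^{t}\beta(x-s)\,ds$ and $f(\tilde\xi_{t})=f(x-t)$ (both sides vanishing when $x\le t$), so the accumulated weight cancels $\mathbf{P}_{x}(\tau_{1}>t)=\exp\{-\int_{0}^{t}\beta(x-s)\,ds\}$ and this event contributes $f(x-t)$; on $\{\tau_{1}\le t\}$, writing $\exp\{\int_{0}^{t}\beta(\tilde\xi_{s})\,ds\}=\exp\{\int_{0}^{\tau_{1}}\beta(x-s)\,ds\}\exp\{\int_{0}^{t-\tau_{1}}\beta(\tilde\xi_{\tau_{1}+u})\,du\}$ and applying the strong Markov property at $\tau_{1}$, the same cancellation and integration over the law of $(\tau_{1},\tilde\xi_{\tau_{1}})$ give the contribution $\int_{0}^{t}\beta(x-s)\langle G,v_{t-s}f\rangle\,ds$. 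Hence $v_{t}f$ solves (\ref{2.9}). Since (\ref{2.9}) has a unique locally bounded solution by Li \cite[Lemma 2.17]{Li22} (as already used for Proposition \ref{prop2.2}) and both $v_{t}f$ and $\pi_{t}f$ are locally bounded solutions, we conclude $\pi_{t}f=v_{t}f$, which is (\ref{3.19}).

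The main difficulty I anticipate is not a single deep step but the bookkeeping in this first-jump decomposition: one must treat the cemetery $\partial$ and the absorbing boundary $0$ consistently so that the $x\le t$ case degenerates correctly, and one must verify that the pre-jump weight is exactly $\exp\{\int_{0}^{\tau_{1}}\beta(x-s)\,ds\}$ (using $\tilde\xi_{s}=x-s$ there) so that it annihilates the survival factor --- this cancellation is the whole reason the exponent in (\ref{3.19}) is $\alpha\,g'(\cdot,1-)$ and nothing else. The only other point requiring care is confirming the PDMP is a Borel right process, for which the boundedness and lower semicontinuity (hence Borel measurability) of the jump rate $\beta$ suffice. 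One could alternatively bypass the explicit construction by invoking a general theorem identifying the semigroup defined by an equation of the type (\ref{2.9}) with a Feynman--Kac functional of a right process, but the piecewise-deterministic picture makes the statement and the exponent transparent.
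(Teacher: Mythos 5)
Your proof is correct, and it identifies exactly the same underlying process as the paper --- the piecewise-deterministic motion that drifts down at unit speed, is absorbed at $0$, and jumps at rate $\tilde m=\alpha\,g'(\cdot,1-)$ to a $G$-distributed position is precisely the process whose semigroup the paper defines by the renewal equation (\ref{3.17}). The difference is the direction of the verification. The paper starts from (\ref{2.9}), uses Li \cite[Proposition 2.9]{Li22} to rewrite it as the perturbation identity (\ref{3.16}) around the killed semigroup $P_t^m$, invokes Li \cite[Theorem A.43]{Li22} to obtain the resurrected Borel right semigroup $(\tilde P_t)$, iterates the combined identity with an explicit error bound $\|\Delta_n(t,\cdot)\|\le(1+e^{ct})\|f\|\|\tilde m\|^n t^n/n!$ to reach $\pi_tf=\tilde P_tf+\int_0^t\tilde P_{t-s}(\tilde m\pi_sf)\,ds$, and then quotes the Feynman--Kac formula. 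You instead construct the process pathwise, define the Feynman--Kac functional $v_tf$ directly, and show by a first-jump decomposition (with the cancellation of the survival factor against the accrued weight) that $v_tf$ solves (\ref{2.9}); uniqueness of locally bounded solutions to (\ref{2.9}), already recorded in the proof of Proposition \ref{prop2.2}, then gives $v_tf=\pi_tf$. Your route is shorter and makes transparent why the exponent is exactly $\alpha g'(\cdot,1-)$, at the cost of having to construct the process and justify the strong Markov property at $\tau_1$ yourself; the one step you wave at --- that the PDMP is a Borel right process --- is exactly what the paper extracts from Li \cite[Theorem A.43]{Li22}, and since your process has semigroup given by (\ref{3.17}) you could simply cite the same result rather than calling it routine. (Incidentally, your computation confirms that the sign in the paper's displayed identity $\pi_tf=\tilde P_tf-\int_0^t\tilde P_{t-s}(\tilde m\pi_sf)\,ds$ should be a plus, consistent with the positive exponent in (\ref{3.19}).)
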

	
	\begin{proof}
		By (\ref{2.9}) we have
		\begin{align*}
			\|\pi_t f\| \le \|f\|+\|\alpha g'(\cdot,1-)\| \int_0^t\|\pi_{t-s} f\| d s,
		\end{align*}
		where $\|\pi_t f\|=\sup _{x \in (0,\infty)} \pi_t f(x)$. Then by Gronwall's inequality we have $\|\pi_t f\| \le e^{\|\alpha g'(\cdot,1-)\| t}$. On the other hand, for simplicity we let $\tilde{m}(y)=\alpha(y) g'(y,1-)$. For any $t\ge 0$ let $P_{t}^{m}$ be the operator on $B(0,\infty)^{+}$ defined by
		\begin{align*}
			P_t^m f(x)=\exp \Big\{\!-\int_0^t \tilde{m}(x-s) d s\Big\} f(x-t).
		\end{align*}
		By Li \cite[Proposition 2.9]{Li22}, (\ref{2.9}) implies that
		\begin{align}\label{3.16}
			\pi_t f(x)=P_t^m f(x)+\int_0^t P_s^m \tilde{m}(x) \langle G,\pi_{t-s} f\rangle ds+\int_0^t P_s^m(\tilde{m} \pi_{t-s} f)(x) d s.
		\end{align}
		Furthermore, by Li \cite[Theorem A.43]{Li22}, we can define a Borel right semigroup $(\tilde{P}_t)_{t \ge 0}$ by
		\begin{align}\label{3.17}
			\tilde{P}_t f(x)=P_t^m f(x)+\int_0^t P_s^m \tilde{m}(x) \langle G,\tilde{P}_{t-s} f\rangle d s.
		\end{align}
		Iterating (\ref{3.17}) we get
		\begin{small}
			\begin{align*}
				\tilde{P}_t f(x) &=P_t^m f(x)+\sum_{i=1}^{\infty} \int_0^t P_{t-s_1}^m \tilde{m}(x) d s_1 \int_0^{s_1} \langle G,P_{s_1-s_2}^m \tilde{m}\rangle d s_2 \cdots \int_0^{s_{i-1}} \langle G,P_{s_{i-1}-s_i}^m \tilde{m}\rangle \langle G,P_{s_i}^m f\rangle d s_i \\
				&\le
				c \sum_{n=0}^{\infty}\|\tilde{m}\|^n \frac{t^n}{n !}= c \exp \{\|\tilde{m}\| t\}<\infty,
			\end{align*}
		\end{small}where $\|\tilde{m}\|=\sup _{y \in (0,\infty)} \tilde{m}(y)$. Combining (\ref{3.16}) and (\ref{3.17}) gives
		\begin{align*}
			\pi_t f(x)= \tilde{P}_t f(x)+\int_0^t P_s^m(\tilde{m} \pi_{t-s} f)(x) d s+\int_0^t P_s^m \tilde{m}(x)\langle G,\pi_{t-s} f-\tilde{P}_{t-s} f \rangle ds.
		\end{align*}
		Iterating this equality a finite number of times yields
		\begin{align*}
			\pi_t f(x)=
			& \tilde{P}_t f(x)+\int_0^t P_{t-s}^m(\tilde{m} \pi_s f)(x) d s\\
			&+\sum_{i=1}^n \int_0^t P_{t-s_1}^m \tilde{m}(x) d s_1 \int_0^{s_1} \langle G,P_{s_1-s_2}^m \tilde{m}\rangle d s_{2} \cdots
			\int_0^{s_{i-1}} \langle G,P_{s_{i-1}-s_i}^m(\tilde{m} \pi_{s_i} f)\rangle d s_i \\
			&+\Delta_n(t, x),
		\end{align*}
		where\begin{small}
			\begin{align*}
				\Delta_n(t, x)= \int_0^t P_{t-s_1}^m \tilde{m}(x) d s_1 \int_0^{s_1} \langle G,P_{s_1-s_2}^m \tilde{m}\rangle ds_2 \cdots \int_0^{s_{n-1}} \langle G,P_{s_{n-1}-s_n}^m \tilde{m}\rangle\langle G,\pi_{s_n} f-\tilde{P}_{s_n} f\rangle d s_n .
			\end{align*}
		\end{small}Notice that $\|\Delta_n(t, \cdot)\| \le(1+e^{c t})\|f\|\|\tilde{m}\|^n \frac{t^n}{n !}$. By letting $n\rightarrow\infty$ in the above equation we obtain
		\begin{align*}
			\pi_t f(x)=\tilde{P}_t f(x)-\int_0^t \tilde{P}_{t-s}(\tilde{m} \pi_s f)(x) d s.
		\end{align*}
		Let $\tilde{\xi}=(\Omega, \mathscr{F}, \mathscr{F}_{t}, \tilde{\xi}_t, \mathbf{P}_{x})$ be a Borel right realization of the semigroup $(\tilde{P}_t)_{t \ge 0}$. Then by Feynman-Kac formula we have
		\begin{align*}
			\pi_{t}f(x)=\mathbf{P}_x \Big[f(\tilde{\xi}_t) \exp\Big\{\!\int_{0}^{t}\alpha(\tilde{\xi}_s)g'(\tilde{\xi}_s,1-)ds\Big\}\Big],
		\end{align*}
		which completes the proof. $\hfill\square$ \end{proof}

	\begin{proposition}\label{prop2.2'} Suppose that $\|g''(\cdot,1-)\|<\infty$. Then for any $t \ge 0$ and $\sigma \in \mathfrak{N}(0, \infty)$ we have
		\begin{align}\label{2.8'}
			\int_{\mathfrak{N}(0,\infty)}\langle\nu, f\rangle^{2} Q_{t}(\sigma, d \nu)=\langle\sigma, \pi_{t} f\rangle^{2}+\langle\sigma, \gamma_{t} f\rangle, \quad f \in B(0,\infty),
		\end{align}
		where $(\pi_{t})_{t \ge 0}$ is defined by (\ref{2.9}) and $(t,x)\mapsto \gamma_{t}f(x)$ is the unique solution of
		\begin{align}\label{2.9'}
			\gamma_{t} f(x)
			=&\int_{0}^{t} \alpha(x-s) \big[g''(x-s,1-) \langle G,\pi_{t-s}f\rangle^{2} + g'(x-s,1-) \langle G,(\pi_{t-s}f)^{2}\rangle\big] ds \nonumber\\
			&+\int_{0}^{t} \alpha(x-s) g'(x-s,1-) \langle G,\gamma_{t-s}f\rangle ds.
		\end{align}
	\end{proposition}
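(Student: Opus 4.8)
The plan is to obtain (\ref{2.8'}) by differentiating the Laplace functional (\ref{2.5}) \emph{twice} in a scalar parameter, exactly as the first moment formula (\ref{2.8}) was obtained from one differentiation in Proposition \ref{prop2.2}. Replacing $f$ by $\theta f$ with $\theta\ge 0$ in (\ref{2.5}) and differentiating twice in $\theta$, one gets
\[
\int_{\mathfrak{N}(0,\infty)}\langle\nu,f\rangle^{2}e^{-\theta\langle\nu,f\rangle}Q_{t}(\sigma,d\nu)
=\Big[\langle\sigma,\partial_{\theta}u_{t}(\theta f)\rangle^{2}-\langle\sigma,\partial_{\theta}^{2}u_{t}(\theta f)\rangle\Big]e^{-\langle\sigma,u_{t}(\theta f)\rangle}.
\]
Letting $\theta\to 0$ and using $u_{0}f\equiv 0$ together with $\partial_{\theta}u_{t}(\theta f)\big|_{\theta=0}=\pi_{t}f$ from the proof of Proposition \ref{prop2.2}, this reduces (\ref{2.8'}) to showing that $\gamma_{t}f:=-\partial_{\theta}^{2}u_{t}(\theta f)\big|_{\theta=0}$ is the solution of (\ref{2.9'}).

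To identify it, I would differentiate once more in $\theta$ the integral equation for $\partial_{\theta}u_{t}(\theta f)$ displayed in the proof of Proposition \ref{prop2.2}, namely
\[
\partial_{\theta}u_{t}(\theta f)(x)=f(x-t)+\int_{0}^{t}\alpha(x-s)\,g'\big(x-s,\langle G,e^{-u_{t-s}(\theta f)}\rangle\big)\big\langle G,\partial_{\theta}u_{t-s}(\theta f)\cdot e^{-u_{t-s}(\theta f)}\big\rangle\,ds.
\]
Applying the chain rule and using $\partial_{\theta}\langle G,e^{-u_{t-s}(\theta f)}\rangle=-\langle G,\partial_{\theta}u_{t-s}(\theta f)\cdot e^{-u_{t-s}(\theta f)}\rangle$, one sees that the factor $g''$ emerges from differentiating $g'(x-s,\cdot)$, and one arrives at an integral equation for $\partial_{\theta}^{2}u_{t}(\theta f)$. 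Evaluating it at $\theta=0$, where $u_{t-s}(\theta f)$ vanishes, $\langle G,e^{-u_{t-s}(\theta f)}\rangle=1$, $\partial_{\theta}u_{t-s}(\theta f)=\pi_{t-s}f$ and $\partial_{\theta}^{2}u_{t-s}(\theta f)=-\gamma_{t-s}f$, and collecting terms yields exactly (\ref{2.9'}). Existence and uniqueness of a locally bounded solution of (\ref{2.9'}) follow as for (\ref{2.9}): it is linear in $\gamma$ with bounded coefficients $\alpha g'(\cdot,1-)$, $\alpha g''(\cdot,1-)$ (here (\ref{2.1}) and the hypothesis $\|g''(\cdot,1-)\|<\infty$ are used) and an inhomogeneous term built from $\pi_{t-s}f$, which is locally bounded by Proposition \ref{prop2.2}; one invokes Li \cite[Lemma 2.17]{Li22} for existence and Gronwall's inequality for uniqueness. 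The extension from $f\in B(0,\infty)^{+}$ to signed $f\in B(0,\infty)$ is then obtained by polarization, since $f\mapsto\pi_{t}f$ is linear and consequently $f\mapsto\gamma_{t}f$ satisfies the parallelogram identity $\gamma_{t}(f+g)+\gamma_{t}(f-g)=2\gamma_{t}f+2\gamma_{t}g$ by uniqueness of solutions to (\ref{2.9'}).

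The main obstacle is the rigorous justification of the two differentiations under the integral sign and of the very existence of $\partial_{\theta}^{2}u_{t}(\theta f)$: one needs the continuity of $g'(x,\cdot)$ and $g''(x,\cdot)$ up to $z=1$ (which holds under (\ref{2.1}) and $\|g''(\cdot,1-)\|<\infty$) and a domination uniform in $s\in[0,t]$. I would deal with this exactly as in the proof of Proposition \ref{prop2.1}: first prove the identity for $f\in C_{0}^{1}(0,\infty)^{+}$, where $\theta\mapsto u_{t}(\theta f)(x)$ is smooth and the Leibniz rule applies on the finite interval $[0,t]$, then pass to step functions of the form (\ref{2.7''}) and finally to general $f\in B(0,\infty)^{+}$ by the same approximation-and-dominated-convergence scheme, noting from (\ref{2.5}) that $u_{t}f_{n}\to u_{t}f$ and that $\pi_{t}f_{n}\to\pi_{t}f$ and $\gamma_{t}f_{n}\to\gamma_{t}f$ along the approximating sequences, the second moments being controlled by the boundedness of $\pi_{t}$ and $\gamma_{t}$ in the supremum norm. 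A more computational alternative that avoids the parameter differentiation would be to condition the single-ancestor process $X_{t}^{\delta_{x}}$ on the time $\eta$ of its first birth and the number of offspring produced then, expand $\mathbf{E}[\langle X_{t}^{\delta_{x}},f\rangle^{2}]$ by independence of the subtrees exactly as in the derivation of (\ref{2.3}), and check that the unique solution of the resulting renewal equation, after the transformation of Li \cite[Proposition 2.9]{Li22}, equals $\pi_{t}f(x)^{2}+\gamma_{t}f(x)$.
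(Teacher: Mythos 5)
Your proposal is correct and follows essentially the same route as the paper: differentiate the Laplace functional $\theta\mapsto e^{-\langle\sigma,u_t(\theta f)\rangle}$ twice, differentiate the integral equation for $\partial_\theta u_t(\theta f)$ once more in $\theta$ so that the $g''$ term appears, set $\gamma_t f=-\partial^2_\theta u_t(\theta f)|_{\theta=0}$, let $\theta\to 0$ to recover (\ref{2.9'}), and conclude uniqueness by Gronwall. Your added remarks on polarization for signed $f$ and on justifying the differentiations via the $C^1_0$-approximation scheme of Proposition \ref{prop2.1} only flesh out steps the paper leaves implicit.
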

	
	\begin{proof} This is similar to the proof of Proposition \ref{prop2.2}. We first consider $f \in B(0,\infty)^{+}$. Let
		\begin{align*}
			\gamma_{t}f(x)=-\frac{\partial^2}{\partial \theta^2} u_{t}(\theta f)(x)\Big|_{\theta=0}.
		\end{align*}
		In view of (\ref{2.5}) and (\ref{2.7'}), for any $\theta >0$ we have
		\begin{align*}
			\int_{\mathfrak{N}(0,\infty)}\langle\nu, f\rangle^{2} e^{-\theta\langle \nu, f\rangle}Q_{t}(\sigma, d \nu)=\Big[\big\langle\sigma, \frac{\partial}{\partial \theta} u_{t}(\theta f)\big\rangle^{2}-\big\langle\sigma, \frac{\partial^2}{\partial \theta^2} u_{t}(\theta f)\big\rangle\Big] e^{-\langle \sigma, u_t (\theta f)\rangle}.
		\end{align*}
		and
		\begin{align*}
			\frac{\partial^2}{\partial \theta^2} u_{t}(\theta f)(x)
			=&\int_{0}^{t} \alpha(x-s) g'(x-s,\langle G,e^{-u_{t-s}(\theta f)}\rangle) \Big\langle G, \frac{\partial^2}{\partial \theta^2} u_{t-s}(\theta f)\cdot e^{-u_{t-s}(\theta f)}\Big\rangle ds \nonumber\\
			&-\int_{0}^{t} \alpha(x-s) g'(x-s,\langle G,e^{-u_{t-s}(\theta f)}\rangle) \Big\langle G, \big(\frac{\partial}{\partial \theta} u_{t-s}(\theta f)\big)^{2}\cdot e^{-u_{t-s}(\theta f)}\Big\rangle ds \nonumber\\
			&-\int_{0}^{t} \alpha(x-s) g''(x-s,\langle G,e^{-u_{t-s}(\theta f)}\rangle) \Big\langle G,\frac{\partial}{\partial \theta} u_{t}(\theta f) \cdot e^{-u_{t-s}(\theta f)}\Big\rangle^2 ds.
		\end{align*}
		By letting $\theta \rightarrow 0$ in the two equations we obtain (\ref{2.8'}) and (\ref{2.9'}), first for $f \in B(0,\infty)^{+}$ and then for $f \in B(0,\infty)$. The uniqueness of the solution to (\ref{2.9'}) follows easily by Gronwall's inequality. $\hfill\square$ \end{proof}

	\section{Stochastic equations and martingale problems}\label{section3}
	
	In this section, we give a construction of the remaining-lifetime age-structured branching model as a measure-valued process by the pathwise unique solution of a stochastic integral equation driven by a time-space Poisson random measure. In particular, as an application of the stochastic equations, we obtain the martingale problems associated with the remaining-lifetime age-structured branching processes. For $\mu \in \mathfrak{D}(0,\infty)$ and $\alpha \in B(0,\infty)^{+}$ we define
	\begin{align*}
		A_{\alpha}(\mu, y)=\inf \{z \ge 0:\langle\mu, \alpha 1_{(0, z]}\rangle>\langle\mu, \alpha\rangle y\}, \quad 0 \le y \le 1
	\end{align*}
	with $\inf \emptyset=\infty$ by convention. Then $\langle\mu, \alpha\rangle=0$ implies $A_{\alpha}(\mu, y)=\infty$ for all $0 \le y \le 1$. By an elementary result in probability theory, we have:

	\begin{lemma}\label{lem3.1} If $\langle\mu, \alpha\rangle>0$ and if $\xi$ is a random variable with the uniform distribution on $(0,1]$, then $\mathbf{P}\{A_{\alpha}(\mu, \xi) \in d x\}=\langle\mu, \alpha\rangle^{-1} \alpha(x) \mu(d x), x \ge 0$. \end{lemma}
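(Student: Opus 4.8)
The plan is to identify $A_{\alpha}(\mu,\cdot)$ as the right-continuous quantile (generalized inverse) function of a probability distribution function and then invoke the classical quantile transform. Concretely, I would set
\[
F(z)=\langle\mu,\alpha\rangle^{-1}\langle\mu,\alpha 1_{(0,z]}\rangle,\qquad z\ge 0,
\]
and first verify that $F$ is the distribution function of the probability measure $\rho(dx):=\langle\mu,\alpha\rangle^{-1}\alpha(x)\mu(dx)$ on $(0,\infty)$. Monotonicity is immediate since $\alpha\ge 0$, the bounds $0\le F\le 1$ follow from $1_{(0,z]}\le 1$, right-continuity follows by dominated convergence (using that $\alpha$ is bounded and $\mu$ is finite), $F(0)=0$, and $F(z)\to 1$ as $z\to\infty$ because $\langle\mu,\alpha\rangle>0$. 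With this notation $A_{\alpha}(\mu,y)=\inf\{z\ge 0:F(z)>y\}$ for $y\in(0,1]$.

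The key step is the elementary sandwich: for every fixed $z\ge 0$ and every $y\in(0,1)$,
\[
F(z)>y\ \Longrightarrow\ A_{\alpha}(\mu,y)\le z\ \Longrightarrow\ F(z)\ge y .
\]
The first implication is trivial from the definition of the infimum. For the second, I would note that $\{z':F(z')>y\}$ is a half-line $(a,\infty)$ or $[a,\infty)$ with $a=A_{\alpha}(\mu,y)$, so right-continuity of $F$ gives $F(a)\ge y$, and then $a\le z$ together with monotonicity gives $F(z)\ge F(a)\ge y$. Substituting $y=\xi$ and taking probabilities, the two inclusions squeeze $\mathbf{P}\{A_{\alpha}(\mu,\xi)\le z\}$ between $\mathbf{P}\{F(z)>\xi\}$ and $\mathbf{P}\{F(z)\ge\xi\}$, both of which equal $F(z)$ because $\xi$ is uniform on $(0,1]$ and hence atomless. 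Therefore $A_{\alpha}(\mu,\xi)$ has distribution function $F$, i.e., law $\rho$, which is precisely the claimed identity $\mathbf{P}\{A_{\alpha}(\mu,\xi)\in dx\}=\langle\mu,\alpha\rangle^{-1}\alpha(x)\mu(dx)$; in particular $A_{\alpha}(\mu,\xi)\in(0,\infty)$ almost surely since $F(0)=0$ and $F(\infty)=1$.

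The argument is entirely routine, so there is no genuine obstacle; the only point demanding a little care is the second implication above, where the infimum defining $A_{\alpha}(\mu,y)$ need not be attained and one must invoke right-continuity of $F$ rather than mere monotonicity — and, correspondingly, it is essential that $\xi$ be atomless, so that the discrepancy between the strict and non-strict inequalities $F(z)>\xi$ and $F(z)\ge\xi$ carries no probability.
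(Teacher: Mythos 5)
Your proof is correct: the identification of $A_{\alpha}(\mu,\cdot)$ as the right-continuous generalized inverse of the distribution function $F$ of $\langle\mu,\alpha\rangle^{-1}\alpha(x)\mu(dx)$, the sandwich $F(z)>y\Rightarrow A_{\alpha}(\mu,y)\le z\Rightarrow F(z)\ge y$ (with right-continuity used for the second implication), and the atomlessness of $\xi$ together give exactly the claimed law. The paper offers no proof at all, dismissing the lemma as ``an elementary result in probability theory,'' and your quantile-transform argument is precisely the standard justification the authors have in mind, so there is nothing to compare beyond noting that you have supplied the omitted details correctly.
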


	Suppose that $(\Omega, \mathscr{F}, \mathscr{F}_{t}, \mathbf{P})$ is a filtered probability space satisfying the usual hypotheses. Let $E:=(0,1] \times \mathbb{N} \times (0,\infty)^{\infty}$ and $w=(y,n,z)\in E$. Let $M(d t, d u, d w, d v)$ be an $(\mathscr{F}_{t})$-Poisson random measure on $(0, \infty)^{2} \times E\times(0,1]$ with intensity $d t d u K(dw) d v$, where $K(dw):=d y \pi(d n) G^{\infty}(d z)$, $\pi(d n)$ denotes the counting measure on $\mathbb{N}$,\ $G^{\infty}(d z)$ denotes the infinite product measure on $(0, \infty)^{\infty}$. Given an $\mathscr{F}_{0}$-measurable random function $X_{0} \in \mathfrak{D}(0, \infty)$, for $t \ge 0$ and $x>0$, we consider the stochastic integral equation
	\begin{small}
		\begin{align}\label{3.1}
			X_{t}(x)\!=\! X_{0}(x+t)\!-\! X_{0}(t)\!+\!\!\int_{0}^{t}\!\! \int_{0}^{\langle X_{s-}, \alpha\rangle}\!\!\! \int_{E} \int_{0}^{p(A_{\alpha}( X_{s-},y), n)} \sum_{i=1}^{n} 1_{\{0<z_{i}-(t-s) \le x\}} M(d s, d u, d w, d v).
		\end{align}
	\end{small}Heuristically, the left-hand side $X_{t}(x)$ is the number of particles alive at time $t$ with remaining lifetimes less than $x$. On the right-hand side, the first two terms $X_{0}(x+t)-X_{0}(t)$ count the number of individuals having remaining lifetimes belonging to $(t,x+t]$ at time 0 and thus still living with remaining lifetimes less than $x$ at time $t$. A reproduction of the population occurs at time $s \in[0, t]$ at rate $\langle X_{s-}, \alpha\rangle d s$. In that case, the remaining lifetime of the mother is distributed according to the probability measure $\langle X_{s-}, \alpha\rangle^{-1} \alpha(x) X_{s-}(d x)$ and is realized as $A_{\alpha}(X_{s-}, y)$, where $y \in(0,1]$ is taken according to the uniform distribution by the Poisson random measure. The number of offspring of the individual takes the value $n \in \mathbb{N}$ with probability $p(A_{\alpha}(X_{s-}, y), n)$. Then the $i$-th individual of those offspring chooses its life-length $z_i\in(0, \infty)$ independently of the others according to the probability distribution $G(d z_i)$ and the birth of it makes effect on the number $X_{t}(x)$ if and only if $0<z_{i}-(t-s) \le x$, $i=1,\ldots,n$. The contribution of the reproduction to $X_{t}(x)$ is recorded by the third term.
	
	Let $\zeta_{a}(x)=1_{\{a \le x\}}$ for $a, x \in \mathbb{R}$. Given a function $f$ on $\mathbb{R}$ we define $f \circ \theta_{t}(x)=f(x+t)$ for $x, t \in \mathbb{R}$. Then we may rewrite (\ref{3.1}) equivalently into
	\begin{small}
		\begin{align}\label{3.2}
			X_{t}(x)=&X_{0} \circ \theta_{t}(x)-X_{0} \circ \theta_{t}(0) \nonumber\\
			&+\int_{0}^{t} \int_{0}^{\langle X_{s-}, \alpha\rangle} \int_{E} \int_{0}^{p(A_{\alpha} (X_{s-},y), n)} \sum_{i=1}^{n} [\zeta_{z_i} \circ \theta_{t-s}(x)-\zeta_{z_i} \circ \theta_{t-s}(0)]\ M(d s, d u, d w, d v).
		\end{align}
	\end{small}A pathwise solution to (\ref{3.2}) is constructed by unscrambling the equation as follows. Let $\tau_{0}=0$. Given $\tau_{k-1} \ge 0$ and $X_{\tau_{k-1}} \in \mathfrak{D}(0, \infty)$, we first define
	\begin{small}
		\begin{align}\label{*}
			\tau_{k}=\tau_{k-1}+\inf \big\{t>0: X_{\tau_{k-1}}(t-\tau_{k-1})+M\big((\tau_{k-1}, \tau_{k-1}+t] \times(0,\langle X_{\tau_{k-1}}, \alpha\rangle] \times H_{k}\big)>0\big\},
		\end{align}
	\end{small}where $H_{k}=\{(w, v): w \in E$, $0<v \le p(A_{\alpha}(X_{\tau_{k-1}}, y), n)\}$, and
	\begin{align}\label{3.3}
		X_{t}(x)=X_{\tau_{k-1}} \circ \theta_{t-\tau_{k-1}}(x), \quad \tau_{k-1} \le t<\tau_{k}, x>0.
	\end{align}
	Then we define
	\begin{align}\label{3.4}
		X_{\tau_{k}}(x)=X_{\tau_{k}-}(x)-X_{\tau_{k}-}(0)+\sum_{i=1}^{n_k}\zeta_{z_i}(x), \quad x>0,
	\end{align}
	where $X_{\tau_{k}-}(x)=X_{\tau_{k-1}} \circ \theta_{\tau_{k}-\tau_{k-1}}(x)$ and $(u_{k}, w_{k}, v_{k}) \in(0, \infty) \times E \times(0,1]$ is the point so that $(\tau_{k}, u_{k}, w_{k}, v_{k}) \in \operatorname{supp}(M)$, where $w_{k}=(y_{k}, n_{k}, z_{k})$. Since $A_{\alpha}(X_{\tau_{k}-}, y_{k}) \in \operatorname{supp}(X_{\tau_{k}-})$, we have $X_{\tau_{k}} \in \mathfrak{D}(0,\infty)$. The expression (\ref{3.4}) means that at time $\tau_{k}$ at least one of the following two events occurs : an individual having remaining lifetime less than $\tau_{k}-\tau_{k-1}$ at time $\tau_{k-1}$ dies or an individual gives birth to $n_{k}$ offspring with life-lengths $(z_{1},\ldots,z_{n_k})\in \mathbb{R}^{n_k}$.
	
	It is clear that (\ref{3.3}) and (\ref{3.4}) uniquely determine the behavior of the trajectory $t \mapsto X_{t}$ on the time intervals $[\tau_{k-1}, \tau_{k}]$, $k=1,2, \cdots$. Let $\tau=\lim _{k \rightarrow \infty} \tau_{k}$ and let $X_{t}=\infty$ for $t \ge \tau$. Then $(X_{t}: t \ge 0)$ is the pathwise unique solution to (\ref{3.2}) up to the lifetime $\tau$. More precisely, (\ref{3.1}) and (\ref{3.2}) hold a.s. with $t$ replaced by $t \wedge \tau_{k}$ for every $k=1,2, \cdots$. Let $m(t)=\sup \{k \ge 0: \tau_{k} \le t\}$ for $t \ge 0$. Let $\beta=\|\alpha g'(\cdot,1-)\|<\infty$. Now we show that $\mathbf{P}(\tau=\infty)=1$.
	
	\begin{lemma}\label{lem3.2}
		Suppose that $\mathbf{E}[X_{0}(\infty)]<\infty$. Then for any $k \ge 1$ we have
		\begin{align}\label{3.5}
			\mathbf{E}\Big[\sup _{0 \le s \le t \wedge \tau_{k}} X_{s}(\infty)\Big] \le \mathbf{E}[X_{0}(\infty)] e^{\beta t}, \quad t \ge 0.
		\end{align}
	\end{lemma}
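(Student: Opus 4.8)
The plan is to use the stochastic equation (\ref{3.1}) evaluated in the limit $x\uparrow\infty$. There $X_0(x+t)\uparrow X_0(\infty)$ and $\sum_{i=1}^{n}1_{\{0<z_i-(t-s)\le x\}}\uparrow\sum_{i=1}^{n}1_{\{z_i>t-s\}}\le n$, so (\ref{3.1}) with $t$ replaced by $s\wedge\tau_k$ gives, for every $s\ge 0$,
\begin{align*}
	X_{s\wedge\tau_k}(\infty)\le X_0(\infty)+J_{s\wedge\tau_k},\qquad J_t:=\int_0^{t}\int_0^{\langle X_{r-},\alpha\rangle}\int_E\int_0^{p(A_\alpha(X_{r-},y),n)} n\,M(dr,du,dw,dv),
\end{align*}
and since $J$ is nondecreasing, $\sup_{0\le s\le t\wedge\tau_k}X_s(\infty)\le X_0(\infty)+J_{t\wedge\tau_k}$. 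Before invoking Gronwall I would first check that the left side of (\ref{3.5}) is finite. Between consecutive times $\tau_{j-1}$ and $\tau_j$ the path only shifts, so $X_s(\infty)$ does not increase there, while (\ref{3.4}) gives $X_{\tau_j}(\infty)\le X_{\tau_j-}(\infty)+n_j$; iterating, $\sup_{0\le s\le t\wedge\tau_k}X_s(\infty)\le X_0(\infty)+\sum_{j=1}^{k}n_j1_{\{\tau_j\le t\}}$. Since, conditionally on $\mathscr{F}_{\tau_j-}$ and $y_j$, the offspring number $n_j$ has law $p(A_\alpha(X_{\tau_j-},y_j),\cdot)$, assumption (\ref{2.1}) yields $\mathbf{E}[n_j1_{\{\tau_j<\infty\}}]\le\|g'(\cdot,1-)\|$, hence $\mathbf{E}[\sup_{0\le s\le t\wedge\tau_k}X_s(\infty)]\le\mathbf{E}[X_0(\infty)]+k\|g'(\cdot,1-)\|<\infty$.

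The main step is to compute the compensator of $J$. The integrand above is nonnegative and $(\mathscr{F}_r)$-predictable, because $X_{r-}$, $A_\alpha(X_{r-},y)$ and $p(A_\alpha(X_{r-},y),n)$ are left-continuous in $r$; so by the Campbell formula for $M$,
\begin{align*}
	\mathbf{E}[J_{t\wedge\tau_k}]=\mathbf{E}\int_0^{t\wedge\tau_k}dr\int_0^{\langle X_{r-},\alpha\rangle}du\int_E K(dw)\int_0^{p(A_\alpha(X_{r-},y),n)} n\,dv.
\end{align*}
Integrating in $v$ gives $n\,p(A_\alpha(X_{r-},y),n)$; integrating against $K(dw)=dy\,\pi(dn)\,G^\infty(dz)$, the $G^\infty$-integral is $1$ (no $z$-dependence) and $\sum_{n}n\,p(x,n)=g'(x,1-)$; the $u$-integral contributes the factor $\langle X_{r-},\alpha\rangle$. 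By Lemma \ref{lem3.1}, $\int_0^1 g'(A_\alpha(X_{r-},y),1-)\,dy=\langle X_{r-},\alpha\rangle^{-1}\langle X_{r-},\alpha g'(\cdot,1-)\rangle$ when $\langle X_{r-},\alpha\rangle>0$, while the $u$-integral vanishes otherwise; so the integrand reduces to $\langle X_{r-},\alpha g'(\cdot,1-)\rangle\le\beta\,X_{r-}(\infty)$.

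Combining the two steps, and using $X_{r-}(\infty)1_{\{r\le\tau_k\}}\le\sup_{0\le v\le r\wedge\tau_k}X_v(\infty)$ together with Tonelli, we arrive at
\begin{align*}
	\mathbf{E}\Big[\sup_{0\le s\le t\wedge\tau_k}X_s(\infty)\Big]\le\mathbf{E}[X_0(\infty)]+\beta\int_0^t\mathbf{E}\Big[\sup_{0\le v\le s\wedge\tau_k}X_v(\infty)\Big]ds.
\end{align*}
Since the left side is finite and nondecreasing in $t$, Gronwall's inequality gives (\ref{3.5}). The only delicate points are technical: handling the left limits and the localization at $\tau_k$ correctly when passing to the supremum and when applying the Campbell formula; the algebraic heart is the reduction of the compensator density to $\langle X_{r-},\alpha g'(\cdot,1-)\rangle$ via Lemma \ref{lem3.1}.
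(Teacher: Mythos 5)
Your proof is correct and follows essentially the same route as the paper: both pass to $x\to\infty$ in the stochastic equation, discard the nonpositive death term, compensate the Poisson integral (via Lemma \ref{lem3.1}) to reduce the rate to $\langle X_{r-},\alpha g'(\cdot,1-)\rangle\le\beta X_{r-}(\infty)$, and close with Gronwall. The only divergence is how integrability is secured before invoking Gronwall: the paper localizes with the extra stopping times $\eta_i=\inf\{t\ge 0:X_t(\infty)\ge i\}$ and passes to the limit $i\to\infty$ by Fatou, whereas you establish finiteness of $\mathbf{E}\big[\sup_{0\le s\le t\wedge\tau_k}X_s(\infty)\big]$ directly by bounding the jump at each $\tau_j$ by $n_j$ and using $\mathbf{E}[n_j\,1_{\{\tau_j\le t\}}]\le\|g'(\cdot,1-)\|$; both justifications are valid.
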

	
	\begin{proof}
		Recall that $X_{t}(\infty)=\lim _{x \rightarrow \infty} X_{t}(x)=X_{t}((0, \infty))$. Let $\eta_{i}=\inf \{t \ge 0: X_{t}(\infty) \ge i\}$. It is clear that $\lim _{i \rightarrow \infty} \eta_{i}=\tau$. Let $\zeta_{i, k}=\eta_{i} \wedge \tau_{k}$. In view of (\ref{3.2}), we have
		\begin{small}
			\begin{align}\label{3.6}
				X_{t}(\infty)\!=\!X_{0}(\infty)\!-\!X_{0}(t)\!+\!\!\int_{0}^{t}\!\! \int_{0}^{\langle X_{s-}, \alpha\rangle}\!\!\! \int_{E}\int_{0}^{p(A_{\alpha}(X_{s-}, y), n)}\! n-\sum_{i=1}^{n} 1_{\{z_i -(t-s)\le 0\}} M(d s, d u, d w, d v).
			\end{align}
		\end{small}It follows that
		\begin{align*}
			\mathbf{E}\Big[\sup _{0 \le s \le t \wedge \zeta_{i, k}} X_{s}(\infty)\Big]
			&\le \mathbf{E}[X_{0}(\infty)]+\sum_{n \in \mathbb{N}} \mathbf{E}\Big[\int_{0}^{t \wedge \zeta_{i, k}}\langle X_{s-}, \alpha\rangle d s \int_{0}^{1} p(A_{\alpha}(X_{s-}, y), n) n d y\Big] \\
			&=\mathbf{E}[X_{0}(\infty)]+\sum_{n \in \mathbb{N}} \mathbf{E}\Big[\int_{0}^{t \wedge \zeta_{i, k}} d s \int_{0}^{\infty} \alpha(y) p(y, n) n d X_{s-}(y)\Big] \\
			&=\mathbf{E}[X_{0}(\infty)]+\mathbf{E}\Big[\int_{0}^{t \wedge \zeta_{i, k}} d s \int_{0}^{\infty} \alpha(y) g'(y,1-) d X_{s-}(y)\Big] \\
			&\le \mathbf{E}[X_{0}(\infty)]+\beta\ \mathbf{E}\Big[\int_{0}^{t \wedge \zeta_{i, k}} X_{s-}(\infty) d s\Big].
		\end{align*}
		Clearly, we have $X_{s-}(\infty) \le i$ for $0<s \le t \wedge \zeta_{i, k}$. Then $\mathbf{E}\big[\sup _{0 \le s \le t \wedge \zeta_{i, k}} X_{s}(\infty)\big]$ is locally bounded in $t \ge 0$. Since the trajectory $s \mapsto X_{s}(\infty)$ has at most countably many jumps, it follows that
		\begin{align*}
			\mathbf{E}\Big[\sup _{0 \le s \le t \wedge \zeta_{i, k}} X_{s}(\infty)\Big]
			&\le \mathbf{E}[X_{0}(\infty)]+\beta\ \mathbf{E}\Big[\int_{0}^{t \wedge \zeta_{i, k}} X_{s}(\infty) d s\Big] \\
			&\le \mathbf{E}[X_{0}(\infty)]+\beta \int_{0}^{t} \mathbf{E}\Big[X_{s \wedge \zeta_{i, k}}(\infty)\Big] d s \\
			&\le \mathbf{E}[X_{0}(\infty)]+\beta \int_{0}^{t} \mathbf{E}\Big[\sup _{0 \le r \le s \wedge \zeta_{i, k}} X_{r}(\infty)\Big] d s .
		\end{align*}
		By Gronwall's inequality we have
		\begin{align*}
			\mathbf{E}\Big[\sup _{0 \le s \le t \wedge \zeta_{i, k}} X_{s}(\infty)\Big] \le \mathbf{E}[X_{0}(\infty)] e^{\beta t}.
		\end{align*}
		Then letting $i \rightarrow \infty$ and using Fatou's lemma we obtain (\ref{3.5}).
		$\hfill\square$
	\end{proof}
	
	\begin{proposition}\label{prop3.3}
		Suppose that $\mathbf{E}[X_{0}(\infty)]<\infty$. Then we have $\mathbf{P}(\tau=\infty)=1$ and
		\begin{align}\label{3.7}
			\mathbf{E}[m(t)] \le \mathbf{E}[X_{0}(\infty)]\Big[1+\|\alpha\| \int_{0}^{t} e^{\beta s} d s\Big], \quad t \ge 0.
		\end{align}
	\end{proposition}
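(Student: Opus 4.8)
The plan is to bound the number of jumps $m(t\wedge\tau_k)$ uniformly in $k$ and then let $k\to\infty$. Since the solution is obtained by unscrambling the equation through (\ref{3.3})--(\ref{3.4}), every jump time $\tau_j$ is of exactly one of two kinds: a \emph{reproduction} jump, at which a point of the Poisson random measure $M$ lands in the active region $\{(s,u,w,v):0<u\le\langle X_{s-},\alpha\rangle,\ 0<v\le p(A_\alpha(X_{s-},y),n)\}$, or a \emph{death} jump, at which one or more particles reach remaining lifetime $0$ and are deleted; the two kinds are a.s.\ disjoint because $G$ is non-atomic. Writing $R_k(t)$ and $D_k(t)$ for the numbers of reproduction and death jumps in $[0,t\wedge\tau_k]$, we have $m(t\wedge\tau_k)\le R_k(t)+D_k(t)\le k<\infty$, so all of the compensator computations below are legitimate before the limit is taken.

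For $R_k(t)$, the $M$-compensator of the reproduction jumps yields $\mathbf{E}[R_k(t)]=\mathbf{E}\big[\int_0^{t\wedge\tau_k}\langle X_{s-},\alpha\rangle\,ds\big]$, because $\sum_{n\in\mathbb{N}}p(x,n)=1$ and the $y$- and $z$-marginals of $K(dw)$ are probability measures. Using $\langle X_{s-},\alpha\rangle\le\|\alpha\|X_{s-}(\infty)$, Tonelli's theorem and then Lemma \ref{lem3.2},
\begin{align*}
\mathbf{E}[R_k(t)]=\mathbf{E}\Big[\int_0^{t\wedge\tau_k}\langle X_{s-},\alpha\rangle\,ds\Big]\le\|\alpha\|\int_0^t\mathbf{E}\Big[\sup_{0\le r\le s\wedge\tau_k}X_r(\infty)\Big]ds\le\mathbf{E}[X_0(\infty)]\,\|\alpha\|\int_0^t e^{\beta s}\,ds,
\end{align*}
uniformly in $k$; this is the second term on the right of (\ref{3.7}). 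For $D_k(t)$ I would argue by mass balance: each death jump deletes at least one particle, so $D_k(t)$ is at most the number of particles that ever exist in $[0,t\wedge\tau_k]$, and by the balance relation (\ref{3.6}) that number is $X_0(\infty)$ plus the total offspring created at reproduction jumps, whose expectation is $\mathbf{E}\big[\int_0^{t\wedge\tau_k}\langle X_{s-},\alpha g'(\cdot,1-)\rangle\,ds\big]$ by Lemma \ref{lem3.1} and hence $\le\beta\,\mathbf{E}[X_0(\infty)]\int_0^t e^{\beta s}\,ds$ by Lemma \ref{lem3.2}. Combining the two estimates and keeping careful track of the constants gives the bound (\ref{3.7}) for $\mathbf{E}[m(t\wedge\tau_k)]$, uniformly in $k$. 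Since $m(t\wedge\tau_k)\uparrow m(t)$ as $k\to\infty$, monotone convergence yields (\ref{3.7}); in particular $m(t)<\infty$ a.s.\ for every $t\ge0$, so for each $t$ only finitely many $\tau_j$ lie in $[0,t]$, which forces $\tau=\lim_k\tau_k=\infty$ a.s.

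The main obstacle I expect is the bookkeeping for the death jumps: one has to verify that every jump of $t\mapsto X_t$ is produced by exactly one of the two mechanisms and counted once, and then re-express the number of deletions in terms of the mass process $X_\cdot(\infty)$, which is the only functional directly controlled by Lemma \ref{lem3.2}; in particular, extracting the precise constant in (\ref{3.7}) is part of this accounting. By contrast, the truncation at $\tau_k$ and the passage to the limit are routine once the uniform bound on $\mathbf{E}[m(t\wedge\tau_k)]$ is established.
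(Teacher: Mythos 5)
Your overall strategy is the same as the paper's: split the jump times on $[0,t\wedge\tau_k]$ into reproduction events, controlled through the compensator of $M$, and death events, bound each in expectation uniformly in $k$ via Lemma \ref{lem3.2}, and then let $k\rightarrow\infty$ by monotone convergence to get $\mathbf{E}[m(t)]<\infty$ and hence $\mathbf{P}(\tau=\infty)=1$. Your treatment of the reproduction jumps is correct and reproduces the term $\mathbf{E}[X_0(\infty)]\|\alpha\|\int_0^t e^{\beta s}ds$ exactly as in the paper, and the concluding limit argument is also the paper's.

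The gap is in the final combination. Your bound for the death jumps is $\mathbf{E}[D_k(t)]\le \mathbf{E}[X_0(\infty)]+\beta\,\mathbf{E}[X_0(\infty)]\int_0^t e^{\beta s}ds$, so adding the reproduction term gives
\begin{align*}
\mathbf{E}[m(t\wedge\tau_k)]\ \le\ \mathbf{E}[X_0(\infty)]\Big[1+(\|\alpha\|+\beta)\int_0^t e^{\beta s}\,ds\Big],
\end{align*}
which is strictly weaker than (\ref{3.7}) whenever $\beta>0$; no amount of ``careful track of the constants'' removes the extra $\beta\int_0^t e^{\beta s}ds$ coming from the deaths of particles born after time $0$. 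The paper arrives at the smaller constant because its opening display identifies $\mathbf{E}[m(t)]$ with $\lim_k\mathbf{E}\big[X_0(t\wedge\tau_k)+\int_0^{t\wedge\tau_k}\!\int\!\int\!\int M\big]$, i.e., it charges the entire death contribution to the initial particles through $X_0(t\wedge\tau_k)\le X_0(\infty)$ and does not separately count deaths of later-born individuals. Your more scrupulous bookkeeping does count them (and under the definition (\ref{*}) of $\tau_k$ they are genuine jump times triggered by $X_{\tau_{k-1}}(\cdot-\tau_{k-1})$ becoming positive), and that is precisely what inflates your constant. So as written your argument establishes $\mathbf{P}(\tau=\infty)=1$ and the estimate (\ref{3.7}) with $\|\alpha\|$ replaced by $\|\alpha\|+\beta$, but not (\ref{3.7}) itself; to land on (\ref{3.7}) you would have to justify excluding the deaths of later-born particles from the count, which is the (implicit) step in the paper's proof that your accounting does not reproduce.
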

	
	\begin{proof}
		By (\ref{3.2}) and monotone convergence we have
		\begin{align*}
			\mathbf{E}[m(t)] &=\lim _{k \rightarrow \infty} \mathbf{E}\Big[X_{0}(t\wedge \tau_{k})+\int_{0}^{t \wedge \tau_{k}} \int_{0}^{\langle X_{s-}, \alpha\rangle} \int_{E} \int_{0}^{p(A_{\alpha}(X_{s-}, y), n)} M(d s, d u, d w, d v)\Big] \\
			&=\lim _{k \rightarrow \infty} \mathbf{E}\Big[X_{0}(t\wedge \tau_{k})+\sum_{n \in \mathbb{N}} \int_{0}^{t \wedge \tau_{k}}\langle X_{s-}, \alpha\rangle d s \int_{0}^{1} p(A_{\alpha}(X_{s-}, y), n) d y\Big] \\
			&=\lim _{k \rightarrow \infty} \mathbf{E}\Big[X_{0}(t\wedge \tau_{k})+\sum_{n \in \mathbb{N}} \int_{0}^{t \wedge \tau_{k}} \alpha(y) d s \int_{0}^{\infty} p(y, n) d X_{s-}(y)\Big] \\
			&\le \lim _{k \rightarrow \infty}\Big\{\mathbf{E}[X_{0}(t\wedge \tau_{k})]+\|\alpha\| \mathbf{E}\Big[\int_{0}^{t \wedge \tau_{k}} X_{s}(\infty) d s\Big]\Big\}\\
			&=\lim _{k \rightarrow \infty}\Big\{\mathbf{E}[X_{0}(t\wedge \tau_{k})]+\|\alpha\| \int_{0}^{t} \mathbf{E}[X_{s \wedge \tau_{k}}(\infty)] d s\Big\}.
		\end{align*}
		Then (\ref{3.7}) follows by (\ref{3.5}) and monotone convergence. In particular, we have $\mathbf{P}\{\tau>t\}=\mathbf{P}\{m(t)<\infty\}=1$ for every $t \ge 0$. That implies $\mathbf{P}\{\tau=\infty\}=1$.
		$\hfill\square$
	\end{proof}
	
	\begin{proposition}\label{prop3.4}
		Suppose that $\mathbf{E}[X_{0}(\infty)]<\infty$. Then we have
		\begin{align*}
			\mathbf{E}\Big[\sup _{0 \le s \le t} X_{s}(\infty)\Big] \le \mathbf{E}[X_{0}(\infty)] e^{\beta t}, \quad t \ge 0.
		\end{align*}
	\end{proposition}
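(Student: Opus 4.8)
The plan is to pass from the estimate on $\sup_{0\le s\le t\wedge\tau_k}X_s(\infty)$ already obtained in Lemma~\ref{lem3.2} to the full statement by letting $k\to\infty$, using the fact that $\mathbf{P}(\tau=\infty)=1$ established in Proposition~\ref{prop3.3}. First I would note that since $\tau_k\uparrow\tau=\infty$ almost surely, for each fixed $t\ge 0$ we have $t\wedge\tau_k\uparrow t$ a.s., and hence
\begin{align*}
	\sup_{0\le s\le t\wedge\tau_k}X_s(\infty)\ \uparrow\ \sup_{0\le s\le t}X_s(\infty)
\end{align*}
as $k\to\infty$ (the map $k\mapsto t\wedge\tau_k$ is nondecreasing and the running supremum is monotone in the time horizon; once $t\wedge\tau_k=t$, which happens eventually on the event $\{\tau>t\}$ of full probability, the two sides agree).

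Then I would apply the monotone convergence theorem to the left-hand side: since $\mathbf{E}[\sup_{0\le s\le t\wedge\tau_k}X_s(\infty)]\le \mathbf{E}[X_0(\infty)]e^{\beta t}$ for every $k$ by (\ref{3.5}), and the integrands increase to $\sup_{0\le s\le t}X_s(\infty)$, we get
\begin{align*}
	\mathbf{E}\Big[\sup_{0\le s\le t}X_s(\infty)\Big]=\lim_{k\to\infty}\mathbf{E}\Big[\sup_{0\le s\le t\wedge\tau_k}X_s(\infty)\Big]\le \mathbf{E}[X_0(\infty)]e^{\beta t},
\end{align*}
which is exactly the claimed bound. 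The hypothesis $\mathbf{E}[X_0(\infty)]<\infty$ is used both to invoke Lemma~\ref{lem3.2} and Proposition~\ref{prop3.3} and to make the right-hand side finite.

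There is essentially no obstacle here; the only point requiring a word of care is the almost-sure identification of the limit of the running suprema, which relies on $\mathbf{P}(\tau=\infty)=1$ so that $t\wedge\tau_k=t$ for all $k$ large (depending on $\omega$). Everything else is monotone convergence. In the write-up I would state these two points—monotonicity in $k$ giving the a.s.\ increasing limit, and (\ref{3.5}) giving the uniform bound—and then conclude in one line.
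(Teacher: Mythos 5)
Your argument is exactly the paper's proof: invoke Proposition \ref{prop3.3} to get $\mathbf{P}(\tau=\infty)=1$, note that the running suprema over $[0,t\wedge\tau_k]$ increase a.s.\ to the supremum over $[0,t]$, and pass to the limit via monotone convergence using the uniform bound (\ref{3.5}). The paper states this in one line; your version just makes the monotone identification explicit, so it is correct and takes the same route.
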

	
	\begin{proof}
		Since $\mathbf{P}\{\tau=\infty\}=1$ by Proposition \ref{prop3.3}, we obtain the result from (\ref{3.5}) by using monotone convergence.
		$\hfill\square$
	\end{proof}
	
	\begin{proposition}\label{prop3.9}
		Suppose that $\mathbf{E}[X_{0}(\infty)]<\infty$. Then for any $t \ge 0$ we have
		\begin{align}\label{3.13}
			\mathbf{E}[X_{t}(\infty) ] \le \mathbf{E}[ X_{0}(\infty)] e^{\beta t}-\mathbf{E}[X_{0}(t)]-\beta \int_{0}^{t}e^{\beta (t-s)}\mathbf{E}[X_{0}(s)]ds,
		\end{align}
	\end{proposition}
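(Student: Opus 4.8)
The plan is to derive a first-moment integral inequality from the pathwise identity (\ref{3.6}) and then close it with Gronwall's inequality. By Proposition \ref{prop3.3} we have $\mathbf{P}(\tau=\infty)=1$, so (\ref{3.6}) holds for every fixed $t\ge 0$, and by Proposition \ref{prop3.4} the function $t\mapsto\mathbf{E}[\sup_{0\le s\le t}X_s(\infty)]$ is locally bounded. The $M$-integral in (\ref{3.6}) is the difference of the two non-negative integrals built from the integrands $n$ and $\sum_{i=1}^n 1_{\{z_i-(t-s)\le 0\}}$; the former has expectation $\mathbf{E}\big[\int_0^t\langle X_{s-},\alpha g'(\cdot,1-)\rangle\,ds\big]\le\beta\int_0^t\mathbf{E}[X_s(\infty)]\,ds<\infty$ — this is precisely the computation already carried out in the proof of Lemma \ref{lem3.2} — while the latter is pathwise dominated by the former, hence also has finite expectation. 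Therefore one may take expectations in (\ref{3.6}) and replace $M(ds,du,dw,dv)$ by its compensator $ds\,du\,K(dw)\,dv$.

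Carrying out the compensated integration: the $dv$-integral over $(0,p(A_{\alpha}(X_{s-},y),n)]$ contributes the factor $p(A_{\alpha}(X_{s-},y),n)$; the $G^{\infty}(dz)$-integral of $\sum_{i=1}^n 1_{\{z_i\le t-s\}}$ contributes $n\langle G,1_{(0,t-s]}\rangle$; the $du$-integral over $(0,\langle X_{s-},\alpha\rangle]$ contributes $\langle X_{s-},\alpha\rangle$; and, by Lemma \ref{lem3.1}, $\sum_{n\in\mathbb{N}}\int_0^1 p(A_{\alpha}(X_{s-},y),n)\,n\,dy=\langle X_{s-},\alpha\rangle^{-1}\langle X_{s-},\alpha g'(\cdot,1-)\rangle$ whenever $\langle X_{s-},\alpha\rangle>0$ (and the $du$-integral already vanishes otherwise, so the lemma is only invoked where it is valid). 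Writing $G(t-s):=\langle G,1_{(0,t-s]}\rangle$ and collecting terms, this gives the identity
\begin{align*}
	\mathbf{E}[X_t(\infty)]=\mathbf{E}[X_0(\infty)]-\mathbf{E}[X_0(t)]+\mathbf{E}\Big[\int_0^t\big(1-G(t-s)\big)\langle X_{s-},\alpha g'(\cdot,1-)\rangle\,ds\Big].
\end{align*}

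To finish, I would bound $1-G(t-s)\le 1$ and $\langle X_{s-},\alpha g'(\cdot,1-)\rangle\le\beta X_{s-}(\infty)$, and use that $s\mapsto X_s(\infty)$ has at most countably many jumps, so $X_{s-}(\infty)=X_s(\infty)$ for Lebesgue-almost every $s$; this yields
\begin{align*}
	\mathbf{E}[X_t(\infty)]\le a(t)+\beta\int_0^t\mathbf{E}[X_s(\infty)]\,ds,\qquad a(t):=\mathbf{E}[X_0(\infty)]-\mathbf{E}[X_0(t)].
\end{align*}
Since $t\mapsto\mathbf{E}[X_t(\infty)]$ is locally bounded, the integral form of Gronwall's inequality gives $\mathbf{E}[X_t(\infty)]\le a(t)+\beta\int_0^t a(s)e^{\beta(t-s)}\,ds$; inserting $a(s)=\mathbf{E}[X_0(\infty)]-\mathbf{E}[X_0(s)]$ and using $\beta\int_0^t\mathbf{E}[X_0(\infty)]e^{\beta(t-s)}\,ds=\mathbf{E}[X_0(\infty)](e^{\beta t}-1)$ reduces the right-hand side exactly to the bound (\ref{3.13}). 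The only genuinely delicate point is the first one — the separate integrability of the ``birth'' and ``death-of-newborn'' parts of the $M$-integral, which legitimizes passing to the compensator; everything after that is an explicit compensator computation together with a routine Gronwall estimate.
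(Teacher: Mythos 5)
Your proposal is correct and follows essentially the same route as the paper: take expectations in (\ref{3.6}), compute the compensator of the $M$-integral (yielding the factor $[1-G(t-s)]\,\alpha(y)g'(y,1-)$), bound it by $\beta\,\mathbf{E}[X_s(\infty)]$, and close with the integral form of Gronwall's inequality — the paper compresses this last step into the phrase ``by a comparison result,'' and your explicit evaluation of $a(t)+\beta\int_0^t a(s)e^{\beta(t-s)}ds$ indeed reproduces (\ref{3.13}). Your added care in separating the two nonnegative parts of the $M$-integrand to justify passing to the compensator is a point the paper leaves implicit, but it is the same argument.
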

	\begin{proof}
		The inequality in (\ref{3.13}) follows from the similar argument as Lemma \ref{lem3.2}. Since the trajectory $t \mapsto X_{t}(\infty)$ has at most countably many jumps, by taking expectations in (\ref{3.6}) we obtain \begin{small}
			\begin{align*}
				\mathbf{E}[X_{t}(\infty)]=\mathbf{E}&[X_{0}(\infty)]-\mathbf{E}[X_{0}(t)]\\
				+&\sum_{n \in \mathbb{N}} \mathbf{E}\Big\{\!\int_{0}^{t}\!\langle X_{s-}, \alpha\rangle d s \int_{0}^{1} \!p(A_{\alpha}(X_{s-}, y), n)\Big[n-\!\!\int_{(0, \infty)^{\infty}}\sum_{i=1}^{n}1_{\{z_i-(t-s)\le 0\}}G^{\infty}(dz)\Big]d y\Big\}\\
				=\mathbf{E}&[X_{0}(\infty)]-\mathbf{E}[X_{0}(t)]\\
				+&\sum_{n \in \mathbb{N}} \mathbf{E}\Big\{\!\int_{0}^{t} d s \int_{0}^{\infty} \alpha(y) p(y, n)n\Big[1-\int_{0}^{\infty}1_{\{z-(t-s)\le 0\}}G(dz)\Big] d X_{s}(y)\Big\}\\
				=\mathbf{E}&[X_{0}(\infty)]-\mathbf{E}[X_{0}(t)]+\int_{0}^{t} \mathbf{E}\Big\{\!\int_{0}^{\infty} \alpha(y) g'(y,1-) [1-G(t-s)] d X_{s}(y)\Big\} ds\\
				\le \mathbf{E}&[X_{0}(\infty)]-\mathbf{E}[X_{0}(t)]+\int_{0}^{t} \beta\ \mathbf{E}[X_{s}(\infty)] d s .
			\end{align*}
		\end{small}Then we get (\ref{3.13}) by a comparison result.
		$\hfill\square$
	\end{proof}
	
	By Situ \cite[Theorem 137]{Situ05}, Proposition \ref{prop3.3} and the comments before Proposition \ref{lem3.2}, we naturally get the following theorem and further obtain that the solution of (\ref{3.1}) or (\ref{3.2}) determines a measure-valued strong Markov process $(X_{t}: t \ge 0)$.
	
	\begin{theorem}\label{th3.4}
		Given any initial value $X_{0} \in \mathfrak{D}(0, \infty)$, there exists a pathwise unique strong solution with infinite lifetime to (\ref{3.1}) or (\ref{3.2}).
	\end{theorem}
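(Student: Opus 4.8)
The plan is to assemble the statement from three ingredients already in place: the explicit ``unscrambling'' construction preceding Lemma~\ref{lem3.2}, which for a given initial value produces a candidate process together with its successive jump times $\tau_k$ and lifetime $\tau=\lim_k\tau_k$; the non-explosion estimate of Proposition~\ref{prop3.3}; and a Yamada--Watanabe-type principle for stochastic equations driven by Poisson random measures, namely Situ \cite[Theorem 137]{Situ05}, which promotes a pathwise solution to a strong one and also delivers the strong Markov property.

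The first step is to observe that the recursive scheme used to define the $\tau_k$ and the trajectory on $[\tau_{k-1},\tau_k]$ (the formulas preceding Lemma~\ref{lem3.2}, together with (\ref{3.3}) and (\ref{3.4})) is carried out using only the initial value $X_0$ and the driving measure $M$. Each $\tau_k$ is an $(\mathscr{F}_t)$-stopping time --- it is the first instant at which an atom of $M$ falls into a region determined by $X_{\tau_{k-1}}$ --- the interpolation (\ref{3.3}) is a deterministic translation, and the jump in (\ref{3.4}) is $\mathscr{F}_{\tau_k}$-measurable. Hence the process $(X_t:0\le t<\tau)$ so obtained is $(\mathscr{F}_t)$-adapted with right-continuous paths, i.e.\ a \emph{strong} solution up to $\tau$, with (\ref{3.1}) and (\ref{3.2}) holding a.s.\ when $t$ is replaced by $t\wedge\tau_k$ for each $k\ge1$. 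Pathwise uniqueness up to $\tau$ follows from the same description: if two solutions are driven by the same $M$ and coincide at time $0$, then between consecutive candidate jump times they undergo the same deterministic translation, so the $M$-regions that trigger the next jump agree, the jump displacements prescribed by (\ref{3.4}) agree, and an induction on $k$ forces the two to coincide on $[0,\tau\wedge\tau')$. This puts us exactly in the hypotheses of Situ \cite[Theorem 137]{Situ05}.

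The second step removes the integrability assumption $\mathbf{E}[X_0(\infty)]<\infty$ that is present in Proposition~\ref{prop3.3} but absent from the theorem. Since each realization of $X_0\in\mathfrak{D}(0,\infty)$ is a finite measure, $X_0(\infty)<\infty$ a.s., so the sets $\Omega_N:=\{X_0(\infty)\le N\}\in\mathscr{F}_0$ increase to $\Omega$ modulo a null set. Running the construction with the truncated initial value $X_0^{(N)}:=X_0\mathbf 1_{\Omega_N}$, which satisfies $\mathbf{E}[X_0^{(N)}(\infty)]\le N<\infty$, Proposition~\ref{prop3.3} gives $\tau^{(N)}=\infty$ a.s.; and because the unscrambling is a pathwise functional of $(X_0,M)$, on $\Omega_N$ the two runs produce the same trajectory and the same lifetime, so $\mathbf{P}(\{\tau<\infty\}\cap\Omega_N)\le\mathbf{P}(\tau^{(N)}<\infty)=0$. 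Letting $N\to\infty$ yields $\mathbf{P}(\tau=\infty)=1$ for an arbitrary $X_0\in\mathfrak{D}(0,\infty)$. Therefore the process built in the first step is a bona fide strong solution on all of $[0,\infty)$; combined with the pathwise uniqueness of the first step and \cite[Theorem 137]{Situ05} this is precisely the assertion, and applying the construction afresh from each $\tau_k$ via the restriction of $M$ to times after $\tau_k$ shows in addition that $(X_t:t\ge0)$ is a measure-valued strong Markov process.

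I expect the only real obstacle to be the bookkeeping in the localization: one must be careful that truncating $X_0$ changes nothing on $\Omega_N$, i.e.\ that on $\Omega_N$ the two executions of (\ref{3.3})--(\ref{3.4}) generate identical jump times and displacements, and that the passage $N\to\infty$ is legitimate because $\{\tau=\infty\}\supseteq\bigcup_N(\Omega_N\cap\{\tau^{(N)}=\infty\})$ and these events exhaust $\Omega$. Granting that, the remaining work is routine: the a priori bounds are supplied by Lemma~\ref{lem3.2} and Propositions~\ref{prop3.3}, \ref{prop3.4} and \ref{prop3.9}, and the passage from pathwise solution to strong solution is exactly what the cited theorem provides.
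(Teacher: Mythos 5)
Your proposal follows the same route as the paper: the unscrambling construction preceding Lemma~\ref{lem3.2} supplies the pathwise-unique solution up to the lifetime $\tau$, Proposition~\ref{prop3.3} gives $\mathbf{P}(\tau=\infty)=1$, and Situ \cite[Theorem 137]{Situ05} upgrades the pathwise solution to a strong one. The only place you go beyond the paper's (very terse) argument is the localization on $\Omega_N=\{X_0(\infty)\le N\}$ to remove the moment hypothesis $\mathbf{E}[X_0(\infty)]<\infty$ of Proposition~\ref{prop3.3}; that step is correct and is genuinely needed when the $\mathscr{F}_0$-measurable initial value is random with infinite mean total mass, a point the paper leaves implicit.
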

	
	The following propositions give some useful characterizations of the process identified by the stochastic equation (\ref{3.1}) or (\ref{3.2}).
	
	\begin{proposition}\label{prop3.5}
		For any $t \ge 0$ and $f \in B(0, \infty)$ we have
		\begin{small}
			\begin{align}\label{3.8}
				\langle X_{t}, f\rangle=
				&\langle X_{0}, f \circ \theta_{-t}\rangle \nonumber\\
				&+\int_{0}^{t} \int_{0}^{\langle X_{s-}, \alpha\rangle}\int_{E} \int_{0}^{p(A_{\alpha}(X_{s-},y),n)} \sum_{i=1}^{n} f(z_{i}-(t-s))1_{\{z_{i}-(t-s)>0\}} M(d s, d u, d w, d v) .
			\end{align}
		\end{small}
	\end{proposition}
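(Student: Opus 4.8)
The plan is to recognise (\ref{3.8}) as the identity obtained by integrating the defining stochastic equation (\ref{3.1})--(\ref{3.2}) against the test function $f$, established first for indicators and then extended by linearity together with a monotone class argument carried out pathwise.

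First I would take $f=1_{(0,x]}$ with $x>0$. Since $f\circ\theta_{-t}(y)=1_{(0,x]}(y-t)=1_{(t,x+t]}(y)$, we have $\langle X_{0},f\circ\theta_{-t}\rangle=X_{0}(x+t)-X_{0}(t)$; moreover $f(z_{i}-(t-s))1_{\{z_{i}-(t-s)>0\}}=1_{\{0<z_{i}-(t-s)\le x\}}$ and $\langle X_{t},1_{(0,x]}\rangle=X_{t}(x)$. Hence (\ref{3.8}) with $f=1_{(0,x]}$ is exactly equation (\ref{3.1}) (equivalently (\ref{3.2})), which holds a.s.\ by Theorem \ref{th3.4}. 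Letting $x\to\infty$ one likewise recovers (\ref{3.8}) for the constant function $f\equiv 1$; this is precisely (\ref{3.6}).

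Next I would fix $\omega$ in the a.s.\ event on which $\tau=\infty$ and (\ref{3.8}) holds simultaneously for all $f=1_{(0,q]}$, $q\in\mathbb{Q}^{+}$ (one $\mathbf{P}$-null set suffices, the exceptional family being countable). For such $\omega$ and fixed $t\ge 0$ we have $t<\tau_{k}$ for some $k$, so only finitely many atoms of $M$ lie in the integration region over $[0,t]$; consequently the right-hand side of (\ref{3.8}) equals the integral of $f\circ\theta_{-t}$ against the finite measure $X_{0}$ plus a finite sum of terms of the form $f(z_{j,i}-(t-\tau_{j}))1_{\{z_{j,i}-(t-\tau_{j})>0\}}$ over the reproduction events $\tau_{j}\le t$ (cf.\ (\ref{3.3})--(\ref{3.4})). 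Both sides of (\ref{3.8}) are therefore linear in $f$ and, by the finiteness of the measures $X_{0}$ and $X_{t}$ and dominated convergence, continuous under uniformly bounded pointwise limits; in particular the collection $\mathcal{H}$ of bounded Borel $f$ for which (\ref{3.8}) holds at $\omega$ is a vector space containing the constants and closed under bounded monotone limits. Using right-continuity of $x\mapsto X_{t}(x)$ and $x\mapsto X_{0}(x)$ one passes from rational $q$ to all $f=1_{(0,x]}$, $x>0$; since $\{1_{(0,x]}:x>0\}$ is closed under multiplication and generates $\mathscr{B}(0,\infty)$, the functional monotone class theorem gives $\mathcal{H}=B(0,\infty)$ for this $\omega$. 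As the exceptional $\omega$'s form a null set, the proof is complete.

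The only point needing care is the bookkeeping of the exceptional null set, so that a single set serves the whole family of test functions; this is why I would pin down the countable generating family $\{1_{(0,q]}:q\in\mathbb{Q}^{+}\}$ first and run the monotone class argument pathwise on the complement of the corresponding null set. When in addition $\mathbf{E}[X_{0}(\infty)]<\infty$, one may instead dispense with the pathwise localisation and pass to the limit directly in $L^{1}$, using the bound $\mathbf{E}\int_{0}^{t}\langle X_{s-},\alpha g'(\cdot,1-)\rangle\,ds\le\beta\int_{0}^{t}\mathbf{E}[X_{s}(\infty)]\,ds<\infty$ coming from Proposition \ref{prop3.4}. Everything else is a routine verification against the construction, entirely analogous to standard arguments for measure-valued stochastic equations (cf.\ \cite{Li22}).
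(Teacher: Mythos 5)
Your proof is correct, but it reaches (\ref{3.8}) by a different route than the paper. The paper first establishes the identity for $f \in C_{0}^{1}(0,\infty)$: it evaluates (\ref{3.2}) at the dyadic points $x_{j}=jm/2^{m}$, forms the Riemann sums $2^{-m}\sum_{j}f'(x_{j})X_{t}(x_{j})$, passes to the limit to obtain the integrated identity (\ref{3.9}), and then converts $\int_{0}^{\infty}f'(x)X_{t}(x)\,dx$ into $-\langle X_{t},f\rangle$ by integration by parts (together with the analogous computations (\ref{3.101})--(\ref{3.102}) for the other terms); only then does it invoke a monotone class argument to reach all of $B(0,\infty)$. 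You instead observe that (\ref{3.8}) specialised to $f=1_{(0,x]}$ \emph{is} the defining equation (\ref{3.1}), so the base case is free, and you carry out the monotone class extension pathwise on the a.s.\ event where $\tau=\infty$ and (\ref{3.1}) holds for all rational $x$ simultaneously. Both arguments terminate in the same monotone class step; yours dispenses with the Riemann-sum and integration-by-parts manipulations entirely and is the more economical for this proposition, while the paper's choice of starting family $C_{0}^{1}(0,\infty)$ is the natural one for the subsequent Propositions \ref{prop3.6} and \ref{prop3.7}, where differentiability of $f$ is genuinely needed. Your attention to the null-set bookkeeping (one exceptional set for the countable family $\{1_{(0,q]}:q\in\mathbb{Q}^{+}\}$, then a purely deterministic extension using the finiteness of $X_{0}$, $X_{t}$ and of the atom count $m(t)$) is exactly the point that needs care in the pathwise version, and you handle it correctly.
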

	
	\begin{proof}
		For any fixed integer $m \ge 1$, let $x_{j}=j m / 2^{m}$ with $j=0,1, \cdots, 2^{m} .$ By (\ref{3.2}) it holds almost surely for any $f \in C_{0}^{1}(0, \infty)$ that
		\begin{small}
			\begin{align*}
				\sum_{j=1}^{2^{m}} f^{\prime}(x_{j}) &X_{t}(x_{j})-\!\sum_{j=1}^{2^{m}} f^{\prime}(x_{j}) X_{0} \circ \theta_{t}(x_{j})+\!\sum_{j=1}^{2^{m}} f^{\prime}(x_{j}) X_{0} \circ \theta_{t}(0) \\
				=\int_{0}^{t}\! &\int_{0}^{\langle X_{s-}, \alpha\rangle}\!\!\! \int_{E}\! \int_{0}^{p(A_{\alpha}(X_{s-}, y), n)} \sum_{j=1}^{2^{m}} f^{\prime}(x_{j})\sum_{i=1}^{n}[\zeta_{z_i} \circ \theta_{t-s}(x_{j})-\zeta_{z_i} \circ \theta_{t-s}(0)] M(d s, d u, d w, d v).
			\end{align*}
		\end{small}Then we multiply the above equation by $2^{-m}$ and let $m \rightarrow \infty$ to see, almost surely,
		\begin{small}
			\begin{align}\label{3.9}
				&\int_{0}^{\infty}f'(x)X_{t}(x) dx -\int_{0}^{\infty}f'(x)X_{0}\circ \theta_{t}(x)dx+\int_{0}^{\infty}f'(x)X_{0}\circ\theta_{t}(0)dx \nonumber\\
				&\!\!=\!\!\int_{0}^{t}\!\! \int_{0}^{\langle X_{s-}, \alpha\rangle}\!\!\!\! \int_{E}\! \int_{0}^{p(A_{\alpha}(X_{s-},y), n)}\!\!\Big\{\!\int_{0}^{\infty}\!\!\! f'(x)\!\sum_{i=1}^{n} [\zeta_{z_i} \circ \theta_{t-s}(x)\!-\!\zeta_{z_i} \circ \theta_{t-s}(0)]dx\Big\} M(d s, d u, d w, d v).
			\end{align}
		\end{small}By integration by parts we get
		\begin{align}\label{3.10}
			\langle X_{t}, f\rangle=-\int_{0}^{\infty} f^{\prime}(x) X_{t}(x) d x.
		\end{align}
		Since $f \in C_{0}^{1}(0, \infty)$, we have
		\begin{align}\label{3.101}
			\int_{0}^{\infty}f'(x)X_{0}\circ\theta_{t}(0)dx=-f(0) X_0 (t)=0.
		\end{align}
		and
		\begin{align}\label{3.102}
			\int_{0}^{\infty}f'(x)\sum_{i=1}^{n} [\zeta_{z_i} \circ \theta_{t-s}(x)-\zeta_{z_i} \circ \theta_{t-s}(0)]dx
			&=\sum_{i=1}^{n} \int_{z_{i}-(t-s)}^{\infty} f'(x) dx 1_{\{z_{i}-(t-s)>0\}} \nonumber\\
			&=-\sum_{i=1}^{n} f(z_i -(t-s)) 1_{\{z_{i}-(t-s)>0\}}.
		\end{align}
		From (\ref{3.9}), (\ref{3.10}), (\ref{3.101}) and (\ref{3.102}) we see that (\ref{3.8}) holds for any $f \in C_{0}^{1}(0, \infty)$. Then the relation also holds for any $f \in B(0, \infty)$ by a monotone class argument.
		$\hfill\square$
	\end{proof}
	
	\begin{proposition}\label{prop3.6} For any $t \ge 0$ and $f \in C_{0}^{1}(0, \infty)$ with the first derivative vanishing at $0$, we have
		\begin{small}
			\begin{align}\label{3.11}
				\langle X_{t}, f\rangle=\langle X_{0}, f\rangle-\!\!\int_{0}^{t}\langle X_{s-}, f^{\prime}\rangle d s +\!\!\int_{0}^{t}\! \int_{0}^{\langle X_{s-}, \alpha\rangle}\!\!\! \int_{E}\! \int_{0}^{p(A_{\alpha}(X_{s-}, y), n)}\sum_{i=1}^{n}f(z_i) M(d s, d u, d w, d v) .
			\end{align}
		\end{small}
	\end{proposition}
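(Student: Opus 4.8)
The plan is to deduce (\ref{3.11}) from the representation (\ref{3.8}) of Proposition~\ref{prop3.5} by expanding the shifted values $f(x-t)$ and $f(z_i-(t-s))$ with the fundamental theorem of calculus. First I would regard $f$ as a function on all of $\mathbb{R}$ through the convention $f(x)=0$ for $x\le 0$; since $f\in C_0^1(0,\infty)$ and $f'$ vanishes at $0$, this extension lies in $C^1(\mathbb{R})$ and $f'$ is bounded and continuous on $\mathbb{R}$ with $f'\equiv 0$ on $(-\infty,0]$. In particular $f(z_i-(t-s))\,1_{\{z_i-(t-s)>0\}}=f(z_i-(t-s))$ and $f'(z_i-(r-s))\,1_{\{z_i-(r-s)>0\}}=f'(z_i-(r-s))$, so the indicators appearing in (\ref{3.8}) are redundant.

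Next, substituting $f(x-t)=f(x)-\int_0^t f'(x-r)\,dr$ into the term $\langle X_0,f\circ\theta_{-t}\rangle$ and $f(z_i-(t-s))=f(z_i)-\int_0^{t-s}f'(z_i-\rho)\,d\rho$ into the integrand of the Poisson integral in (\ref{3.8}), and applying Tonelli's theorem (the measure $X_0$ is finite and $f'$ is bounded), the right-hand side of (\ref{3.8}) splits into four pieces: $\langle X_0,f\rangle$; the term $-\int_0^t\langle X_0,f'(\cdot-r)\rangle\,dr$; the desired jump term $\int_0^t\int_0^{\langle X_{s-},\alpha\rangle}\int_E\int_0^{p(A_\alpha(X_{s-},y),n)}\sum_{i=1}^n f(z_i)\,M(ds,du,dw,dv)$; and a remainder, which I will call $R$, equal to $-\int_0^t\int_0^{\langle X_{s-},\alpha\rangle}\int_E\int_0^{p(A_\alpha(X_{s-},y),n)}\sum_{i=1}^n\int_0^{t-s}f'(z_i-\rho)\,d\rho\,M(ds,du,dw,dv)$. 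It thus remains to verify the identity $-\int_0^t\langle X_0,f'(\cdot-r)\rangle\,dr+R=-\int_0^t\langle X_{s-},f'\rangle\,ds$.

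For this I would apply Proposition~\ref{prop3.5} to the bounded function $f'$ in place of $f$, with $t$ replaced by $r$, obtaining $\langle X_r,f'\rangle=\langle X_0,f'(\cdot-r)\rangle+\int_0^r\int_0^{\langle X_{s-},\alpha\rangle}\int_E\int_0^{p(A_\alpha(X_{s-},y),n)}\sum_{i=1}^n f'(z_i-(r-s))\,M(ds,du,dw,dv)$, with the indicator again absorbed. Integrating over $r\in[0,t]$, using that $\langle X_{r-},f'\rangle=\langle X_r,f'\rangle$ for Lebesgue-a.e.\ $r$ since $s\mapsto X_s$ has at most countably many jumps, then interchanging the outer integral $\int_0^t dr$ with the random-measure integral over $s\in(0,r]$, and finally substituting $\rho=r-s$ in $\int_s^t f'(z_i-(r-s))\,dr=\int_0^{t-s}f'(z_i-\rho)\,d\rho$, one obtains exactly $\int_0^t\langle X_{s-},f'\rangle\,ds=\int_0^t\langle X_0,f'(\cdot-r)\rangle\,dr-R$. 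Plugging this into the decomposition above gives (\ref{3.11}).

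The one point that demands care is the interchange of the deterministic time integral with the Poisson-measure integral in the last step. I would settle it by recalling that, by Theorem~\ref{th3.4} (see also Proposition~\ref{prop3.3}), the solution has infinite lifetime, so for each fixed $t$ only finitely many jump times fall in $[0,t]$; hence every $M$-integral appearing above is almost surely a finite sum, and the interchange (as well as the linear splitting used earlier) is just Tonelli's theorem for these finite sums rather than a stochastic Fubini argument. Everything else is routine bookkeeping with the convention $f\equiv 0$ on $(-\infty,0]$.
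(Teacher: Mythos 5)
Your proof is correct, but it takes a genuinely different route from the paper's. The paper also starts from (\ref{3.8}), but proceeds by a telescoping argument: it partitions $[0,t]$, applies (\ref{3.8}) on each subinterval (implicitly using the flow property of the pathwise solution restarted at $t_{j-1}$), writes the deterministic increments $\langle X_{t_{j-1}}, f\circ\theta_{t_{j-1}-t_j}\rangle-\langle X_{t_{j-1}}, f\rangle$ as $-\int_{t_{j-1}}^{t_j}\langle X_{t_{j-1}}, f'\circ\theta_{t_{j-1}-s}\rangle\,ds$, and then lets the mesh tend to zero using the right-continuity of $s\mapsto X_s$ and the continuity of $s\mapsto f\circ\theta_s$. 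You instead expand the shifts in (\ref{3.8}) by the fundamental theorem of calculus --- legitimate because the zero extension of $f$ is $C^1$ on $\mathbb{R}$ precisely under the hypotheses $f(0+)=0$ and $f'(0+)=0$ --- and identify the leftover term by applying Proposition \ref{prop3.5} a second time to the bounded Borel function $f'$ and integrating in $r$; the interchange of the time integral with the $M$-integral is elementary because, by Proposition \ref{prop3.3} and Theorem \ref{th3.4}, $m(t)<\infty$ almost surely, so every $M$-integral in sight is a finite sum. Your argument thus avoids both the limiting procedure and the appeal to the flow property on subintervals, at the cost of invoking Proposition \ref{prop3.5} for $f'$, which is harmless since that proposition is stated for all of $B(0,\infty)$. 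Both proofs are sound; yours replaces the analytic limit by a bookkeeping identity and is, if anything, the more self-contained of the two.
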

	
	\begin{proof} For $m \ge 1$ we consider a partition $\Delta_{m}=\{0=t_{0}<t_{1}<\cdots<t_{m}=t\}$ of $[0, t]$. Notice that $\frac{\partial}{\partial t}(f \circ \theta_{t})(x)=f^{\prime}(x+t)$. Since $f(0)=0$, by (\ref{3.8}) we have
		\begin{small}
			\begin{align*}
				\langle X_{t}, f\rangle=&\ \langle X_{0}, f\rangle+\sum_{j=1}^{m}[\langle X_{t_{j}}, f\rangle-\langle X_{t_{j-1}}, f \circ \theta_{t_{j-1}-t_{j}}\rangle]+\sum_{j=1}^{m}[\langle X_{t_{j-1}}, f \circ \theta_{t_{j-1}-t_{j}}\rangle-\langle X_{t_{j-1}}, f\rangle]\\
				=&\ \langle X_{0}, f\rangle - \sum_{j=1}^{m}
				\int_{t_{j-1}}^{t_{j}}\langle X_{t_{j-1}}, f^{\prime} \circ \theta_{t_{j-1}-s}\rangle d s\\
				&\ +\sum_{j=1}^{m}\!
				\int_{t_{j-1}}^{t_{j}}\! \int_{0}^{\langle X_{s-}, \alpha\rangle}\!\!\!\! \int_{E}\! \int_{0}^{p(A_{\alpha}(X_{s-}, y), n)}\!\sum_{i=1}^{n}f(z_i -(t_j -s)) 1_{\{z_i -(t_j -s)>0\}} M(d s, d u, d w, d v).
			\end{align*}
		\end{small}where $\langle X_{t_{j-1}}, f^{\prime} \circ \theta_{t_{j-1}-s}\rangle$ is well-defined since the first derivative of $f$ vanishes at $0$. By letting $|\Delta_{m}|:=\max _{1 \le j \le m}(t_{j}-t_{j-1}) \rightarrow 0$ and using the right continuity of $s \mapsto X_{s}$ and the continuity of $s \mapsto f \circ \theta_{s}$ we obtain (\ref{3.11}). $\hfill\square$ \end{proof}
	
	Let $C^{1}[0, \infty)$ be the set of bounded continuous Borel functions on $[0,\infty)$ with bounded continuous derivatives of the first order. Then we have the following result.

	\begin{proposition}\label{prop3.7} For $f \in C_{0}^{1}(0, \infty)$ with the first derivative vanishing at $0$ and $F \in C^{1}[0, \infty)$, let $F_{f}(\mu)=F(\langle\mu, f\rangle)$ and let
		\begin{small}
			\begin{align*}
				\mathscr{L}_{0} F_{f}(\mu)\!=\!-\langle\mu, f^{\prime}\rangle F^{\prime}\!(\langle\mu, f\rangle)\! -\!\! \sum_{n \in \mathbb{N}} \int_{0}^{\infty}\!\!\! \alpha(y) p(y, n)\!\int_{(0, \infty)^{\infty}}\!\!\Big[F(\langle\mu, f\rangle)\!-\! F\Big(\langle\mu, f\rangle\!+\!\!\sum_{i=1}^{n}f(z_i)\Big)\Big] G^{\infty}(dz)\mu(d y).
			\end{align*}
		\end{small}Then we have
		\begin{align}\label{3.12}
			F_{f}(X_{t})=F_{f}(X_{0})+\int_{0}^{t} \mathscr{L}_{0} F_{f}(X_{s}) d s+\text {mart}.
		\end{align}
	\end{proposition}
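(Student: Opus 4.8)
The plan is to use Proposition~\ref{prop3.6} to view $Y_t:=\langle X_t,f\rangle$ as a real-valued process of finite variation on compacts having only finitely many jumps, and then to apply the elementary (pure-jump) change-of-variables formula to $F(Y_t)$. The hypotheses $f\in C_0^1(0,\infty)$ with $f'(0)=0$ are precisely what is needed to invoke (\ref{3.11}), which displays $Y_t$ as the sum of $\langle X_0,f\rangle$, the absolutely continuous term $-\int_0^t\langle X_{s-},f'\rangle\,ds$, and the pure-jump term
\begin{small}
\begin{align*}
J_t:=\int_0^t\int_0^{\langle X_{s-},\alpha\rangle}\int_E\int_0^{p(A_\alpha(X_{s-},y),n)}\sum_{i=1}^n f(z_i)\,M(ds,du,dw,dv).
\end{align*}
\end{small}By Proposition~\ref{prop3.3} we have $\mathbf{P}(\tau=\infty)=1$, and over $[0,t]$ the points of $M$ that contribute have a.s.\ finite total intensity $\int_0^t\langle X_{s-},\alpha\rangle\,ds<\infty$; hence $J$ has only finitely many jumps on each bounded interval and $Y$ has no continuous martingale part.

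Next I would apply It\^o's formula for $F\in C^1[0,\infty)$: since $Y$ has finite variation no second-order term appears, and writing $Y_{s-}=\langle X_{s-},f\rangle$ and $\Delta Y_s=\sum_{i=1}^n f(z_i)$ at a point $(s,u,w,v)$ of $M$, one obtains
\begin{small}
\begin{align*}
F(Y_t)=&\,F(Y_0)-\int_0^t F'(\langle X_{s-},f\rangle)\langle X_{s-},f'\rangle\,ds\\
&+\int_0^t\int_0^{\langle X_{s-},\alpha\rangle}\int_E\int_0^{p(A_\alpha(X_{s-},y),n)}\Big[F\Big(\langle X_{s-},f\rangle+\sum_{i=1}^nf(z_i)\Big)-F(\langle X_{s-},f\rangle)\Big]M(ds,du,dw,dv),
\end{align*}
\end{small}the last integral merely collecting the jumps $F(Y_s)-F(Y_{s-})$.

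Then I would compensate: with $M=\tilde M+\hat M$ and $\hat M(ds,du,dw,dv)=ds\,du\,K(dw)\,dv$, the $\tilde M$-integral of the bracketed term is the martingale in (\ref{3.12}); its integrand is bounded by $2\|F\|$, and under $\mathbf{E}[X_0(\infty)]<\infty$ one has $\mathbf{E}\int_0^t\langle X_{s-},\alpha\rangle\,ds\le\|\alpha\|\,\mathbf{E}\int_0^t X_{s-}(\infty)\,ds<\infty$ by Proposition~\ref{prop3.4}, so this is a genuine martingale (in general, a local martingale obtained by localizing along the $\tau_k$). In the $\hat M$-integral I would integrate out $v$ (giving the factor $p(A_\alpha(X_{s-},y),n)$) and $u$ (giving the factor $\langle X_{s-},\alpha\rangle$), and then apply Lemma~\ref{lem3.1} in the form $\langle X_{s-},\alpha\rangle\int_0^1 h(A_\alpha(X_{s-},y))\,dy=\int_0^\infty h(y)\alpha(y)\,X_{s-}(dy)$ (both sides vanishing when $\langle X_{s-},\alpha\rangle=0$), with $h(y)=\sum_{n}p(y,n)\int_{(0,\infty)^\infty}[F(\langle X_{s-},f\rangle+\sum_{i=1}^n f(z_i))-F(\langle X_{s-},f\rangle)]\,G^\infty(dz)$. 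This turns the $\hat M$-integral into
\begin{small}
\begin{align*}
\int_0^t\sum_{n\in\mathbb{N}}\int_0^\infty\alpha(y)p(y,n)\int_{(0,\infty)^\infty}\Big[F\Big(\langle X_{s-},f\rangle+\sum_{i=1}^n f(z_i)\Big)-F(\langle X_{s-},f\rangle)\Big]G^\infty(dz)\,X_{s-}(dy)\,ds,
\end{align*}
\end{small}which, combined with the drift term $-\int_0^t F'(\langle X_{s-},f\rangle)\langle X_{s-},f'\rangle\,ds$, is precisely $\int_0^t\mathscr{L}_0F_f(X_{s-})\,ds$. Finally, $X_{s-}$ may be replaced by $X_s$ under the $ds$-integral since they differ on an at most countable set, which yields (\ref{3.12}).

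I expect the only points needing real care to be (i) certifying the martingale term (immediate from the moment bound of Proposition~\ref{prop3.4}, or by localization along the $\tau_k$) and (ii) the bookkeeping that converts the Poisson-integral representation into the $X_{s-}(dy)$-integral via Lemma~\ref{lem3.1}; the remainder is a routine application of the jump change-of-variables formula to the semimartingale identity (\ref{3.11}).
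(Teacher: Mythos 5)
Your proposal is correct and takes essentially the same route as the paper: apply the pure-jump It\^o (change-of-variables) formula to the finite-variation semimartingale decomposition (\ref{3.11}), split the Poisson integral into its compensated part --- the martingale, certified via the moment bound of Proposition \ref{prop3.4} --- and its compensator, which Lemma \ref{lem3.1} converts into the integral against $\alpha(y)X_{s-}(dy)$ appearing in $\mathscr{L}_0F_f$. The only difference is that you make explicit the bookkeeping (finitely many jumps on compacts, integrating out $u$ and $v$, the role of Lemma \ref{lem3.1}) that the paper leaves implicit.
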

	
	\begin{proof} Let $\tilde{M}$ denote the compensated measure of $M$. Since the process $s \mapsto X_{s}$ has at most countably many jumps, by Proposition \ref{prop3.6} and It$\mathrm{\hat{o}}$'s formula, we have
		\begin{small}
			\begin{align*}
				F(\langle X_{t}, f\rangle)= F&(\langle X_{0}, f\rangle)-\int_{0}^{t} F^{\prime}(\langle X_{s}, f\rangle)\langle X_{s}, f^{\prime}\rangle d s \\
				-&\!\int_{0}^{t}\! \int_{0}^{\langle X_{s-}, \alpha\rangle}\!\!\! \int_{E} \!\int_{0}^{p(A_{\alpha}(X_{s-}, y), n)}\!\Big[F(\langle X_{s-}, f\rangle)\!-\!F\Big(\langle X_{s-}, f\rangle\!+\!\sum_{i=1}^{n}f(z_i)\Big)\Big]\! M(d s, d u, d w, d v) \\
				=F&(\langle X_{0}, f\rangle)-\int_{0}^{t} F^{\prime}(\langle X_{s}, f\rangle)\langle X_{s}, f^{\prime}\rangle d s-M_{t}^{F}(f) \\
				-&\!\int_{0}^{t} d s\!\! \int_{0}^{\infty}\!\! \alpha(y) \sum_{n \in \mathbb{N}} p(y, n)\!\!\int_{(0, \infty)^{\infty}}\!\Big[F(\langle X_{s-}, f\rangle)\!-\!F\Big(\langle X_{s-}, f\rangle\!+\!\sum_{i=1}^{n}f(z_i)\Big)\Big]G^{\infty}(dz)X_{s-}(dy)\\
				=F&(\langle X_{0}, f\rangle)+\int_{0}^{t} \mathscr{L}_{0} F_{f}(X_{s}) d s-M_{t}^{F}(f).
			\end{align*}
		\end{small}where
		\begin{small}
			\begin{align*}
				M_{t}^{F}(f)=\!\int_{0}^{t}\! \int_{0}^{\langle X_{s-}, \alpha\rangle}\!\!\! \int_{E} \int_{0}^{p(A_{\alpha}(X_{s-}, y), n)}\!\Big[F(\langle X_{s-}, f\rangle)-F\Big(\langle X_{s-}, f\rangle+\sum_{i=1}^{n}f(z_i)\Big)\Big] \tilde{M}(d s, d u, d w, d v),
			\end{align*}
		\end{small}and $\langle X_{s}, f^{\prime} \rangle$ is well-defined since the first derivative of $f$ vanishes at $0$. By Proposition \ref{prop3.4} one can check that $\{M_{t}^{F}(f): t \ge 0\}$ is a martingale.
		$\hfill\square$
	\end{proof}
	
	\begin{theorem}\label{th3.8}
		The measure-valued process $(X_{t}: t \ge 0)$ defined by the stochastic equation (\ref{3.1}) or (\ref{3.2}) is an $(\alpha,g,G)$-process.
	\end{theorem}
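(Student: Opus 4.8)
The plan is to verify that the process $(X_t:t\ge 0)$ constructed by the stochastic equation has the transition semigroup $(Q_t)_{t\ge 0}$ characterized by the Laplace functional identity (\ref{2.5}), i.e. that for every $f\in B(0,\infty)^+$ and $t\ge 0$ one has $\mathbf{E}[\exp\{-\langle X_t,f\rangle\}\,|\,\mathscr{F}_0]=\exp\{-\langle X_0,u_tf\rangle\}$, where $u_tf$ solves (\ref{2.4}) or equivalently (\ref{2.7'}). Since Theorem~\ref{th3.4} already gives a well-defined measure-valued strong Markov process, it suffices to identify its transition mechanism. I would first reduce to $f\in C_0^1(0,\infty)^+$ with first derivative vanishing at $0$, for which $t\mapsto u_tf$ is smooth and satisfies the differential form (\ref{2.7}); the general case $f\in B(0,\infty)^+$ then follows by the approximation argument already used in the proof of Proposition~\ref{prop2.1} together with dominated convergence.

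For such $f$, the key step is to apply the Dynkin/It\^o formula to the function $s\mapsto G_s(X_s):=\exp\{-\langle X_s,u_{t-s}f\rangle\}$ on $[0,t]$. Using Proposition~\ref{prop3.6} with the test function $u_{t-s}f$ (whose first derivative vanishes at $0$, as can be arranged for the approximating functions), together with It\^o's formula for the jump integral, the finite-variation part of $s\mapsto G_s(X_s)$ picks up two contributions: one from the explicit time-dependence, $-\langle X_s,\frac{\partial}{\partial r}u_rf|_{r=t-s}\rangle$ times $G_s(X_s)$, and one from the generator $\mathscr{L}_0$ applied to $F_f$ with $F(v)=e^{-v}$ and $f$ replaced by $u_{t-s}f$. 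By the differential equation (\ref{2.7}) for $u_{t-s}f$, namely $\frac{\partial}{\partial r}u_rf(x)=-\frac{\partial}{\partial x}u_rf(x)+\alpha(x)[1-g(x,\langle G,e^{-u_rf}\rangle)]$, these two contributions cancel exactly: the transport term $-\frac{\partial}{\partial x}u_rf$ matches the $-\langle X_s,(u_{t-s}f)'\rangle$ term from Proposition~\ref{prop3.6}, and the nonlinear term $\alpha(x)[1-g(x,\langle G,e^{-u_{t-s}f}\rangle)]$ matches the jump part of $\mathscr{L}_0 F_f$ after computing $F(\langle\mu,h\rangle)-F(\langle\mu,h\rangle+\sum_i h(z_i))=e^{-\langle\mu,h\rangle}(1-\prod_i e^{-h(z_i)})$ and integrating against $G^\infty(dz)$ to produce the factor $1-\langle G,e^{-h}\rangle^n$, then summing against $p(y,n)$ to produce $1-g(y,\langle G,e^{-h}\rangle)$. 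Hence $s\mapsto G_s(X_s)$ has zero drift, so it is a local martingale; boundedness (it is bounded by $1$) makes it a true martingale, and equating expectations at $s=0$ and $s=t$ yields (\ref{2.5}).

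I would then note that the renewal-type uniqueness established around (\ref{2.3})--(\ref{2.7'}) via Gronwall's inequality shows $(Q_t)_{t\ge 0}$ defined by (\ref{2.5}) is the unique such semigroup, so the process constructed from (\ref{3.1})--(\ref{3.2}) is by definition an $(\alpha,g,G)$-process. The main obstacle is bookkeeping in the It\^o computation: one must justify applying It\^o's formula along the moving test function $u_{t-s}f$ (rather than a fixed $f$), control the integrability of the jump integrand using the first-moment bounds from Proposition~\ref{prop3.4} and the estimate $u_tf\le\pi_t f$ with $\|\pi_t f\|\le e^{\|\alpha g'(\cdot,1-)\|t}$ from Proposition~\ref{prop3.11}, and handle the fact that the approximating functions $f_n$ from (\ref{2.7'''}) may not have first derivative vanishing at $0$ — this last point is minor since $f_n$ is flat near $0$ by construction, so $f_n'(0)=0$, and one checks $u_{t-s}f_n$ inherits the same property from (\ref{2.7}). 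Once the martingale identity is obtained for the $f_n$, passing $n\to\infty$ by dominated convergence (using $0\le 1-e^{-\langle X_t,f_n\rangle}\le 1$) transfers (\ref{2.5}) to all $f\in B(0,\infty)^+$, completing the identification.
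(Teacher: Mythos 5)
Your proposal is correct and follows essentially the same route as the paper: both establish the martingale problem via Proposition~\ref{prop3.7}, compute $\mathscr{L}_{0}e_{f}$ to recover $1-g(y,\langle G,e^{-f}\rangle)$, and then show $s\mapsto e^{-\langle X_{s},u_{t-s}f\rangle}$ is a martingale by combining the generator with the differential equation (\ref{2.7}), extending to $f\in B(0,\infty)^{+}$ by the approximation of Proposition~\ref{prop2.1}. The only difference is technical: where you invoke a time-dependent It\^o formula for the moving test function $u_{t-s}f$ (and correctly flag that this needs justification), the paper instead uses a telescoping Riemann-sum decomposition over a partition together with the mean-value theorem, which accomplishes the same cancellation without appealing to such a formula.
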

	
	\begin{proof} By Proposition \ref{prop3.7} we can see $(X_{t}: t \ge 0)$ solves the martingale problem (\ref{3.12}). Recall that $(X_{t}: t \ge 0)$ is a c\`{a}dl\`{a}g process. Let $e_{f}(\mu)=e^{-\langle\mu, f\rangle}$ and let $\mathscr{L}_{0}$ be defined as in Proposition \ref{prop3.7}. For $f \in C_{0}^{1}(0, \infty)$ with the first derivative vanishing at $0$, it is elementary to see that
		\begin{align*}
			\mathscr{L}_{0} e_{f}(\mu) &=\langle\mu, f^{\prime}\rangle e^{-\langle\mu, f\rangle}-e^{-\langle\mu, f\rangle} \int_{0}^{\infty}\!\! \alpha(y)\int_{(0, \infty)^{\infty}}\Big[1-\sum_{n \in \mathbb{N}} p(y, n) e^{-\sum_{i=1}^{n}f(z_i)}\Big] G^{\infty}(dz)\mu(d y) \\
			&=\langle\mu, f^{\prime}\rangle e^{-\langle\mu, f\rangle}-e^{-\langle\mu, f\rangle} \int_{0}^{\infty}\!\! \alpha(y)\Big[1-\sum_{n \in \mathbb{N}} p(y, n) \int_{(0, \infty)^{\infty}} e^{-\sum_{i=1}^{n}f(z_i)} G^{\infty}(dz)\Big]\mu(d y) \\
			&=e^{-\langle\mu, f\rangle}\langle\mu, f^{\prime}\rangle-e^{-\langle\mu, f\rangle} \int_{0}^{\infty}\!\! \alpha(y)[1-g(y, \langle G, e^{-f}\rangle)] \mu(d y).
		\end{align*}
		Let $\mathscr{F}_{t}^{X}=\sigma\{X_{s}: 0 \le s \le t\}$. By (\ref{2.7}), (\ref{3.12}) and the mean-value theorem, we have
		\begin{align*}
			e^{-\langle X_{t}, u_{T-t} f\rangle}= e&^{-\langle X_{0}, u_{T} f\rangle}+\sum_{i=0}^{\infty}\big[e^{-\langle X_{t \wedge(i+1) / k}, u_{T-t \wedge i / k} f\rangle}-e^{-\langle X_{t \wedge i / k}, u_{T-t \wedge i / k} f\rangle}\big] \\
			+&\sum_{i=0}^{\infty}\big[e^{-\langle X_{t \wedge(i+1) / k}, u_{T-t \wedge(i+1) / k} f\rangle}-e^{-\langle X_{t \wedge(i+1) / k}, u_{T-t \wedge i / k} f\rangle}\big] \\
			=e&^{-\langle X_{0}, u_{T} f\rangle}+\sum_{i=0}^{\infty} \int_{t \wedge i / k}^{t \wedge(i+1) / k} e^{-\langle X_{s}, u_{T-t \wedge i / k} f\rangle}\Big\{\big\langle X_{s}, \frac{\partial}{\partial x} u_{T-t \wedge i / k} f\rangle\\
			-&\int_{0}^{\infty} \alpha(y)[1- g(y, \langle G, e^{-u_{T-t \wedge i / k}f}\rangle)] X_{s}(d y)\Big\} d s \\
			+&M_{k}(t)+\sum_{i=0}^{\infty} \int_{t \wedge i / k}^{t \wedge(i+1) / k} e^{-\xi_{k}(t)}\Big\{\big\langle X_{t \wedge(i+1) / k}, -\frac{\partial}{\partial x} u_{T-s} f\big\rangle\\
			+&\int_{0}^{\infty} \alpha(y)[1- g(y, \langle G, e^{-u_{T-s}f}\rangle)] X_{t \wedge(i+1) / k}(d y)\Big\} d s,
		\end{align*}
		where $t \mapsto M_{k}(t)$ is an $(\mathscr{F}_{t}^{X})$-martingale and
		\begin{align*}
			\langle X_{t \wedge(i+1) / k}, u_{T-t \wedge i / k} f\rangle \le \xi_{k}(t) \le\langle X_{t \wedge(i+1) / k}, u_{T-t \wedge(i+1) / k} f\rangle
		\end{align*}
		or
		\begin{align*}
			\langle X_{t \wedge(i+1) / k}, u_{T-t \wedge(i+1) / k} f\rangle \le \xi_{k}(t) \le\langle X_{t \wedge(i+1) / k}, u_{T-t \wedge i / k} f\rangle.
		\end{align*}
		By letting $k \rightarrow \infty$ we see that $t \mapsto e^{-\langle X_{t}, u_{T-t} f\rangle}$ is an $(\mathscr{F}_{t}^{X})$-martingale. In particular, for any $f \in C_{0}^{1}(0, \infty)$ with the first derivative vanishing at $0$, we have
		\begin{align}\label{3.81}
			\mathbf{E}\big[e^{-\langle X_{T}, f\rangle}\big|\mathscr{F}_{t}^{X}\big]=e^{-\langle X_{t}, u_{T-t} f\rangle}, \quad T \ge t \ge 0.
		\end{align}
		The extension to $f \in B(0, \infty)^{+}$ also follows by arguments similar to those in the proof of Proposition \ref{prop2.1}.
		$\hfill\square$
	\end{proof}

	It follows from Theorem \ref{th3.8} that the solution of the martingale problem (\ref{3.12}) is an $(\alpha,g,G)$-process. We further easily infer that the uniqueness of the solution to the martingale problem (\ref{3.12}) holds. Then as a corollary of Theorem \ref{th3.8} we get the following:
	
	\begin{corollary}\label{cor3}
		The martingale problem (\ref{3.12}) is well-posed.
	\end{corollary}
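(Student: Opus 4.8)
The plan is to separate well-posedness into existence of a solution and uniqueness in law. Existence is already in hand: by Theorem \ref{th3.4} the stochastic equation (\ref{3.1}) (equivalently (\ref{3.2})) has, for every initial value in $\mathfrak{D}(0,\infty)$, a pathwise unique strong solution with infinite lifetime, and Proposition \ref{prop3.7} shows that the associated measure-valued process satisfies (\ref{3.12}) for all admissible $f$ and $F$. Averaging over the prescribed initial distribution then gives a solution of the martingale problem with that initial law, so only uniqueness remains to be proved.

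For uniqueness, let $(X_t:t\ge0)$ be an arbitrary c\`{a}dl\`{a}g solution of the martingale problem (\ref{3.12}) with a given initial distribution. The idea is to rerun the computation in the proof of Theorem \ref{th3.8} using only the martingale relation (\ref{3.12}), not the stochastic equation. Applying (\ref{3.12}) to the exponential functionals $e_f$ and, for a fixed horizon $T$, to the time-dependent kernels $u_{T-t}f$, and handling the time dependence through the telescoping decomposition over a partition of $[0,t]$ together with the mean-value theorem and the evolution identity (\ref{2.7}) for $u_tf$, one obtains that $t\mapsto e^{-\langle X_t,u_{T-t}f\rangle}$ is an $(\mathscr{F}_t^{X})$-martingale. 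Conditioning then yields
\begin{align*}
\mathbf{E}\big[e^{-\langle X_T,f\rangle}\,\big|\,\mathscr{F}_t^{X}\big]=e^{-\langle X_t,u_{T-t}f\rangle},\qquad T\ge t\ge0,
\end{align*}
first for $f\in C_0^1(0,\infty)$ with vanishing first derivative at $0$, and then, by the monotone and dominated convergence arguments used in the proof of Proposition \ref{prop2.1}, for every $f\in B(0,\infty)^{+}$.

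By (\ref{2.5}) the right-hand side of this identity equals $\int e^{-\langle\nu,f\rangle}Q_{T-t}(X_t,d\nu)$. Since Laplace functionals of this form separate the probability measures on the state space, it follows that $\mathbf{E}[G(X_T)\mid\mathscr{F}_t^{X}]=\int G(\nu)\,Q_{T-t}(X_t,d\nu)$ for every bounded measurable $G$; in other words, $(X_t:t\ge0)$ is a Markov process with transition semigroup $(Q_t)_{t\ge0}$. Consequently all finite-dimensional distributions of $(X_t)$ are determined by the initial law, any two solutions with the same initial distribution have the same law on path space, and, combined with existence, this establishes that the martingale problem (\ref{3.12}) is well-posed.

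The one point requiring care is the same one met in the proof of Theorem \ref{th3.8}: one must justify that (\ref{3.12}) can be applied to the time-dependent test functions $u_sf$ --- checking their regularity in $x$ and the vanishing of the first derivative at $0$, or else approximating them by admissible functions --- and show that the error terms produced by the mean-value theorem in the partition decomposition tend to $0$ as the mesh shrinks. For an arbitrary solution of the martingale problem this last step may in addition require a preliminary first-moment bound for $X_t(\infty)$, which can be extracted from (\ref{3.12}) itself by a Gronwall argument in the spirit of Lemma \ref{lem3.2}. Beyond these routine verifications there is no real obstacle.
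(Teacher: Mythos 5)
Your proposal is correct and follows essentially the same route as the paper: the authors also obtain well-posedness by noting that the proof of Theorem \ref{th3.8} uses only the martingale relation (\ref{3.12}) (via the exponential functionals and the telescoping/mean-value argument) to show that any solution is an $(\alpha,g,G)$-process with transition semigroup $(Q_t)_{t\ge 0}$, so that uniqueness in law follows and existence is supplied by the stochastic-equation construction. Your additional remarks on the regularity of the test functions $u_sf$ and the first-moment bound for an arbitrary solution simply make explicit the verifications the paper leaves implicit.
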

	
	Actually, the characterizations of the $(\alpha,g,G)$-processes in terms of Laplace transforms (\ref{2.5}), stochastic equations (\ref{3.1}) (or (\ref{3.2})) or martingale problems (\ref{3.12}) are equivalent in the sense of distribution. An interesting open problem is starting from a measure-valued process with transition semigroup given by (\ref{2.5}), how one can construct the noise so that it satisfies the stochastic equation (\ref{3.1}) or (\ref{3.2}).

	\section{The branching process with immigration}\label{setion4}
	
	In this section we introduce an remaining-lifetime age-structured branching process with immigration and discuss its ergodic property. Let $\psi$ be a functional on $B(0,\infty)^{+}$ given by
	\begin{align*}
		\psi(f)=\int_{\mathfrak{N}(0,\infty)^{\circ}}(1-e^{-\langle \nu, f\rangle}) L(d \nu),\quad f \in B(0,\infty)^{+},
	\end{align*}
	where $L(d\nu)$ is a finite measure on $\mathfrak{N}(0,\infty)^{\circ}:=\mathfrak{N}(0,\infty) \backslash \mathbf{0}$, and $\mathbf{0}$ denotes the null measure. A Markov process $Y = (Y_t : t \ge 0)$ with state space $\mathfrak{N}(0,\infty)$ is called a \textit{remaining-lifetime age-structured branching process with immigration with parameters $(\alpha,g,G,\psi)$} or simply an \textit{$(\alpha,g,G,\psi)$-process} if it has the transition semigroup $(P_t)_{t \ge 0}$ defined by
	\begin{align}\label{4.1}
		\int_{\mathfrak{N}(0,\infty)} e^{-\langle \nu, f\rangle} P_t(\sigma, d \nu)=\exp \Big\{\!-\langle\sigma, u_t f\rangle-\int_0^t \psi(u_s f) d s\Big\},\quad f \in B(0,\infty)^{+},
	\end{align}
	where $u_{t} f(x)$ is the unique solution to (\ref{2.4}) or (\ref{2.7'}). Such a process is characterized by the properties (i)-(iii) given in Section \ref{section2}, along with the following:
	\begin{description}
		
		\item[\textmd{(iv)}] The immigrants come according to a Poisson random measure on $(0, \infty) \times \mathfrak{N}(0,\infty)^{\circ}$ with intensity $dsL(d\nu)$.
		
	\end{description}

	Suppose that $(\Omega, \mathscr{F}, \mathscr{F}_{t}, \mathbf{P})$ is a filtered probability space satisfying the usual hypotheses. Let $M(dt, du, dw, dv)$ be a Poisson random measure as in Section \ref{section3}, and let $M_{1}(dt, d\nu)$ be an $(\mathscr{F}_t)$-Poisson random measure on $(0, \infty) \times \mathfrak{N}(0,\infty)^{\circ}$ with intensity $dt L(d\nu)$. We assume that the two random measures are independent of each other. For $t \ge 0$ and $x>0$, we consider the stochastic integral equation
	\begin{small}\begin{align}\label{4.2}
			Y_{t}(x)=&\ Y_{0} \circ \theta_{t}(x)-Y_{0} \circ \theta_{t}(0)+\int_{0}^{t}\int_{\mathfrak{N}(0,\infty)^{\circ}}[\nu \circ \theta_{t-s}(x)-\nu \circ \theta_{t-s}(0)]\ M_{1}(ds,d\nu)\nonumber\\
			&
			+\int_{0}^{t} \int_{0}^{\langle Y_{s-}, \alpha\rangle} \int_{E} \int_{0}^{p(A_{\alpha} (Y_{s-},y), n)} \sum_{i=1}^{n} [\zeta_{z_i} \circ \theta_{t-s}(x)-\zeta_{z_i} \circ \theta_{t-s}(0)]\ M(d s, d u, d w, d v).
		\end{align}
	\end{small}Here, the first two terms on the right-hand side are as explained for (\ref{3.1}) and the last term represents the immigration. A pathwise-unique solution to (\ref{4.2}) is constructed as follows. Let $\sigma_0=0$. Given $\sigma_{k-1}\ge 0$, we first define
	\begin{align*}
		\sigma_k=\sigma_{k-1}+\inf \{t>0: M_{1}((\sigma_{k-1}, \sigma_{k-1}+t] \times \mathfrak{N}(0,\infty)^{\circ})>0\}
	\end{align*}
	and
	\begin{align*}
		Y_t(x)=X_{k, t-\sigma_{k-1}}(x),\quad \sigma_{k-1} \le t<\sigma_k,
	\end{align*}
	where $(X_{k, t}(x): t \ge 0)$ is the pathwise-unique solution to the following equation:
	\begin{small}
		\begin{align*}
			X_{t}(x)=&\ Y_{\sigma_{k-1}} \circ \theta_{t}(x)-Y_{\sigma_{k-1}} \circ \theta_{t}(0)\\
			&+\int_{0}^{t} \int_{0}^{\langle X_{s-}, \alpha\rangle} \int_{E} \int_{0}^{p(A_{\alpha} (X_{s-},y), n)} \sum_{i=1}^{n} [\zeta_{z_i} \circ \theta_{t-s}(x)-\zeta_{z_i} \circ \theta_{t-s}(0)]\ M(\sigma_{k-1}+d s, d u, d w, d v).
		\end{align*}
	\end{small}Then we define
	\begin{align*}
		Y_{\sigma_k}(x)=Y_{\sigma_k-}(x)+\int_{\{\sigma_k\}} \int_{\mathfrak{N}(0,\infty)^{\circ}} \nu(x) M_{1}(ds, d\nu) .
	\end{align*}
	It is easy to see that $\lim _{k \rightarrow \infty} \sigma_k=\infty$ and $(Y_t: t \ge 0)$ is the pathwise-unique solution to (\ref{4.2}).
	
	The solution to (\ref{4.2}) determines a measure-valued strong Markov process $(Y_t : t \ge 0)$ with state space $\mathfrak{N}(0,\infty)$. We omit the proofs of some of the following results since the arguments are similar to those for the corresponding results in Section \ref{section3}.
	
	\begin{proposition}\label{prop4.0}
		For any $t \ge 0$ and $\sigma \in \mathfrak{N}(0, \infty)$ we have
		\begin{align}\label{4.0}
			\int_{\mathfrak{N}(0, \infty)}\langle\nu, f\rangle P_{t}(\sigma, d \nu)=\langle\sigma, \pi_{t} f\rangle+\int_{\mathfrak{N}(0,\infty)^{\circ}} L(d \nu) \int_{0}^{t} \langle \nu,\pi_{s}f\rangle ds, \quad f \in B(0, \infty),
		\end{align}
		where $\pi_{t} f(x)$ is the unique solution to (\ref{2.9}).
	\end{proposition}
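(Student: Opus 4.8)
The plan is to imitate the proof of Proposition \ref{prop2.2}: obtain (\ref{4.0}) by differentiating the Laplace-functional identity (\ref{4.1}) at the origin. First I would fix $f\in B(0,\infty)^{+}$ and, for $\theta>0$, replace $f$ by $\theta f$ in (\ref{4.1}), getting
\begin{align*}
	\int_{\mathfrak{N}(0,\infty)}e^{-\theta\langle\nu,f\rangle}P_{t}(\sigma,d\nu)
	=\exp\Big\{\!-\langle\sigma,u_{t}(\theta f)\rangle-\int_{0}^{t}\psi(u_{s}(\theta f))\,ds\Big\}.
\end{align*}
Then I would differentiate both sides in $\theta$ and let $\theta\to0+$. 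On the left the derivative is $-\int_{\mathfrak{N}(0,\infty)}\langle\nu,f\rangle e^{-\theta\langle\nu,f\rangle}P_{t}(\sigma,d\nu)$, which by monotone convergence tends to $-\int_{\mathfrak{N}(0,\infty)}\langle\nu,f\rangle P_{t}(\sigma,d\nu)$. For the right-hand side I would use two facts: $u_{s}(0)\equiv0$ (apply (\ref{2.7'}) with $f\equiv0$ and Gronwall's inequality, recalling $\langle G,1\rangle=1$), so that $\psi(u_{s}(0))=\psi(0)=0$ and the exponential factor tends to $1$; and $\frac{\partial}{\partial\theta}u_{t}(\theta f)(x)\big|_{\theta=0}=\pi_{t}f(x)$, which is exactly how $\pi_{t}$ was identified in the proof of Proposition \ref{prop2.2}. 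Granting the interchange of $\frac{\partial}{\partial\theta}$ with $\int_{0}^{t}ds$ and with the integral defining $\psi$, the $\theta$-derivative of the right side at $\theta=0$ equals $-\langle\sigma,\pi_{t}f\rangle-\int_{0}^{t}\int_{\mathfrak{N}(0,\infty)^{\circ}}\langle\nu,\pi_{s}f\rangle L(d\nu)\,ds$, since $\frac{\partial}{\partial\theta}\psi(u_{s}(\theta f))=\int_{\mathfrak{N}(0,\infty)^{\circ}}\langle\nu,\frac{\partial}{\partial\theta}u_{s}(\theta f)\rangle e^{-\langle\nu,u_{s}(\theta f)\rangle}L(d\nu)$ and $u_{s}(0)=0$. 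Equating the two limits gives (\ref{4.0}) for $f\in B(0,\infty)^{+}$, and the general case $f\in B(0,\infty)$ follows by writing $f=f^{+}-f^{-}$ and using linearity.

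The step I expect to cost the most work is justifying these interchanges of limits and derivatives, i.e.\ dominated/monotone convergence as $\theta\to0+$ and differentiation under the integral signs. My plan there is to establish bounds uniform in $\theta\in[0,1]$: from (\ref{2.7'}) together with $0\le 1-g(x,z)\le1$ for $z\in[0,1]$ one gets $\|u_{t}(\theta f)\|\le\theta\|f\|+\|\alpha\|t$, and applying Gronwall's inequality to the integral equation for $\frac{\partial}{\partial\theta}u_{t}(\theta f)$ displayed in the proof of Proposition \ref{prop2.2} (using $g'(x,z)\le g'(x,1-)$ and $e^{-u}\le1$) one gets $\|\frac{\partial}{\partial\theta}u_{t}(\theta f)\|\le\|f\|e^{\beta t}$. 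Since $L$ is a finite measure, these bounds let me dominate $\frac{\partial}{\partial\theta}\psi(u_{s}(\theta f))$ on compact $s$-intervals and carry out all the interchanges; the $L$-integral on the right of (\ref{4.0}) is finite precisely when the right side is, and otherwise the identity is to be read in $[0,\infty]$.

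As an alternative route, one could avoid Laplace transforms entirely and mimic the proof of Proposition \ref{prop3.5}: pair $f\in C_{0}^{1}(0,\infty)$ with the stochastic equation (\ref{4.2}), take expectations to kill the compensated-noise terms, and solve the resulting renewal-type integral equation for $t\mapsto\mathbf{E}[\langle Y_{t},f\rangle]$ against the equation (\ref{2.9}) characterizing $\pi_{t}$, extending afterwards to $f\in B(0,\infty)$ by a monotone class argument. I expect the Laplace-differentiation argument above to be the shorter of the two.
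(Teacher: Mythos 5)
Your primary argument is correct and is essentially the route the paper intends: the paper omits the proof of Proposition \ref{prop4.0}, pointing to the analogous earlier results, and the natural analogue is the proof of Proposition \ref{prop2.2} --- differentiate the Laplace-functional identity (\ref{4.1}) with $f$ replaced by $\theta f$ and let $\theta\to 0+$, the extra exponent $\int_0^t\psi(u_s(\theta f))\,ds$ contributing exactly the immigration term $\int_{\mathfrak{N}(0,\infty)^{\circ}}L(d\nu)\int_0^t\langle\nu,\pi_s f\rangle\,ds$. Your uniform bounds $\|u_t(\theta f)\|\le\theta\|f\|+\|\alpha\|t$ and $\|\frac{\partial}{\partial\theta}u_t(\theta f)\|\le\|f\|e^{\beta t}$ justify the interchanges at (at least) the level of rigor of the paper's own treatment of (\ref{2.8}), and your closing caveat that the identity is read in $[0,\infty]$ unless $\int L(d\nu)\,\nu((0,\infty))<\infty$ is the right one.
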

	
	\begin{proposition}\label{prop4.1}
		For any $t \ge 0$ and $f \in B(0, \infty)^{+}$ we have
		\begin{align*}
			\langle Y_{t}, f\rangle=&\ \langle Y_{0}, f \circ \theta_{-t}\rangle+\int_{0}^{t}\int_{\mathfrak{N}(0,\infty)^{\circ}}\langle\nu,f\circ \theta_{s-t}\rangle\ M_{1}(ds,d\nu)\\
			&+\int_{0}^{t} \int_{0}^{\langle Y_{s-}, \alpha\rangle}\int_{E} \int_{0}^{p(A_{\alpha}(Y_{s-},y), n)}\sum_{i=1}^{n} f(z_{i}-(t-s))1_{\{z_{i}-(t-s)>0\}}\ M(d s, d u, d w, d v).
		\end{align*}
	\end{proposition}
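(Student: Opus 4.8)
The plan is to follow the same route as in the proof of Proposition~\ref{prop3.5}, now starting from the stochastic equation (\ref{4.2}) for $Y$ in place of (\ref{3.2}) for $X$, and to treat the immigration term separately. First I would establish the identity for $f\in C_0^1(0,\infty)$. Fix an integer $m\ge 1$, set $x_j=jm/2^m$ for $j=0,1,\ldots,2^m$, evaluate (\ref{4.2}) at each $x=x_j$, multiply by $f'(x_j)2^{-m}$ and sum over $j$ to obtain, almost surely, an identity between Riemann sums. Letting $m\to\infty$ and using that $Y_t$, $\nu$ and $\zeta_{z_i}\circ\theta_{t-s}$ are distribution functions of finite measures while $f'$ is bounded and continuous, these sums converge to the corresponding integrals $\int_0^\infty f'(x)(\cdots)\,dx$; this produces the analogue of (\ref{3.9}) together with one additional term coming from the $M_1$-part of (\ref{4.2}).

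Next I would carry out the integrations by parts term by term, as in (\ref{3.10})--(\ref{3.102}): one has $\langle Y_t,f\rangle=-\int_0^\infty f'(x)Y_t(x)\,dx$, the term $\int_0^\infty f'(x)Y_0\circ\theta_t(0)\,dx=-f(0)Y_0(t)=0$, and for the branching jumps $\int_0^\infty f'(x)\sum_{i=1}^n[\zeta_{z_i}\circ\theta_{t-s}(x)-\zeta_{z_i}\circ\theta_{t-s}(0)]\,dx=-\sum_{i=1}^n f(z_i-(t-s))1_{\{z_i-(t-s)>0\}}$. The one genuinely new computation is for the immigration term: since $\nu\circ\theta_{t-s}(x)-\nu\circ\theta_{t-s}(0)=\nu((t-s,\,x+(t-s)])$, integration by parts gives
\[
\int_0^\infty f'(x)\big[\nu\circ\theta_{t-s}(x)-\nu\circ\theta_{t-s}(0)\big]\,dx=-\int_{(t-s,\infty)}f(y-(t-s))\,\nu(dy)=-\langle\nu,f\circ\theta_{s-t}\rangle,
\]
the last equality using the convention $f(x)=0$ for $x\le 0$. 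The analogous computation for $Y_0\circ\theta_t$ yields $-\langle Y_0,f\circ\theta_{-t}\rangle$. Assembling these identities gives the claimed formula for $f\in C_0^1(0,\infty)$.

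Finally, the extension from $C_0^1(0,\infty)$ to $f\in B(0,\infty)^{+}$ is by a monotone class and monotone convergence argument exactly as at the end of the proof of Proposition~\ref{prop3.5}; restricting to nonnegative $f$ is what allows the passage to the limit in the $M$- and $M_1$-integrals without imposing moment conditions on $L$. I expect the main technical point to be justifying the interchange of the limit $m\to\infty$ with the stochastic integrals against $M$ and $M_1$. This can be done pathwise: for almost every $\omega$ the Poisson measures $M$ and $M_1$ have only finitely many atoms over the relevant region in $[0,t]\times\cdots$ (the region being locally bounded by estimates of the type in Proposition~\ref{prop3.4} applied to $Y$), so each stochastic integral is in fact a finite sum and the Riemann-sum convergence applies summand by summand.
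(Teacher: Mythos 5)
Your proposal is correct and follows essentially the route the paper intends: the paper omits this proof precisely because it is the argument of Proposition \ref{prop3.5} applied to equation (\ref{4.2}), with the only new ingredient being the integration by parts for the immigration term, which you carry out correctly (including the use of the convention $f(y)=0$ for $y\le 0$ to identify $-\int_{(t-s,\infty)}f(y-(t-s))\,\nu(dy)$ with $-\langle\nu,f\circ\theta_{s-t}\rangle$). Your justification of the limit interchange via the a.s. finiteness of the atoms of $M$ and $M_1$ over the relevant region is also consistent with the paper's construction of the solution to (\ref{4.2}).
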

	
	\begin{proposition}\label{prop4.2}
		For any $t \ge 0$ and $f \in C_{0}^{1}(0, \infty)^{+}$ with the first derivative vanishing at $0$, we have
		\begin{align}\label{4.3}
			\langle Y_{t}, f\rangle=
			&\ \langle Y_{0}, f\rangle-\int_{0}^{t}\langle Y_{s-}, f^{\prime}\rangle d s+\int_{0}^{t}\int_{\mathfrak{N}(0,\infty)^{\circ}}\langle\nu,f\rangle\ M_{1}(ds,d\nu) \nonumber\\
			&+\int_{0}^{t}\int_{0}^{\langle Y_{s-}, \alpha\rangle}\int_{E}\int_{0}^{p(A_{\alpha}(Y_{s-}, y), n)}\sum_{i=1}^{n}f(z_i) M(d s, d u, d w, d v).
		\end{align}
	\end{proposition}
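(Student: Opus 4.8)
The plan is to adapt the proof of Proposition \ref{prop3.6}, now starting from the representation of $\langle Y_{t},f\rangle$ furnished by Proposition \ref{prop4.1} rather than from (\ref{3.8}). Fix $t\ge 0$ and $f\in C_{0}^{1}(0,\infty)^{+}$ with first derivative vanishing at $0$, and for $m\ge 1$ pick a partition $\Delta_{m}=\{0=t_{0}<t_{1}<\cdots<t_{m}=t\}$ of $[0,t]$. I would begin from the telescoping identity
\begin{align*}
\langle Y_{t},f\rangle=\langle Y_{0},f\rangle+\sum_{j=1}^{m}\big[\langle Y_{t_{j}},f\rangle-\langle Y_{t_{j-1}},f\circ\theta_{t_{j-1}-t_{j}}\rangle\big]+\sum_{j=1}^{m}\big[\langle Y_{t_{j-1}},f\circ\theta_{t_{j-1}-t_{j}}\rangle-\langle Y_{t_{j-1}},f\rangle\big],
\end{align*}
and treat the last sum exactly as in Proposition \ref{prop3.6}: since $\frac{\partial}{\partial s}(f\circ\theta_{t_{j-1}-s})(x)=-f'(x+t_{j-1}-s)$ and $f(0)=0$ together with the vanishing of $f'$ at $0$ make $f$ and $f'$ extend continuously by $0$ to $(-\infty,0]$, this sum equals $-\sum_{j=1}^{m}\int_{t_{j-1}}^{t_{j}}\langle Y_{t_{j-1}},f'\circ\theta_{t_{j-1}-s}\rangle\,ds$, which tends to $-\int_{0}^{t}\langle Y_{s-},f'\rangle\,ds$ as $|\Delta_{m}|\to 0$ by the right continuity of $s\mapsto Y_{s}$ and the continuity of $r\mapsto f'\circ\theta_{r}$ in supremum norm.

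For the middle sum I would apply Proposition \ref{prop4.1} twice on each block: once at time $t_{j}$ with test function $f$, and once at time $t_{j-1}$ with test function $f\circ\theta_{t_{j-1}-t_{j}}$. Using $f\circ\theta_{t_{j-1}-t_{j}}\circ\theta_{-t_{j-1}}=f\circ\theta_{-t_{j}}$ and the implication $z_{i}-(t_{j}-s)>0\Rightarrow z_{i}-(t_{j-1}-s)>0$ (so that the contributions over $[0,t_{j-1}]$ cancel), subtraction gives
\begin{align*}
\langle Y_{t_{j}},f\rangle-\langle Y_{t_{j-1}},f\circ\theta_{t_{j-1}-t_{j}}\rangle
=&\int_{t_{j-1}}^{t_{j}}\int_{\mathfrak{N}(0,\infty)^{\circ}}\langle\nu,f\circ\theta_{s-t_{j}}\rangle\,M_{1}(ds,d\nu)\\
&+\int_{t_{j-1}}^{t_{j}}\int_{0}^{\langle Y_{s-},\alpha\rangle}\int_{E}\int_{0}^{p(A_{\alpha}(Y_{s-},y),n)}\sum_{i=1}^{n}f(z_{i}-(t_{j}-s))1_{\{z_{i}-(t_{j}-s)>0\}}\,M(ds,du,dw,dv).
\end{align*}
Summing over $j$ and letting $|\Delta_{m}|\to 0$, the shift $\theta_{s-t_{j}}$ tends to the identity uniformly in $s$ over the relevant interval and $z_{i}-(t_{j}-s)\to z_{i}$; since $M_{1}$ has a.s.\ only finitely many atoms in $[0,t]$, the first sum converges to $\int_{0}^{t}\int_{\mathfrak{N}(0,\infty)^{\circ}}\langle\nu,f\rangle\,M_{1}(ds,d\nu)$, and the second converges pathwise to $\int_{0}^{t}\int_{0}^{\langle Y_{s-},\alpha\rangle}\int_{E}\int_{0}^{p(A_{\alpha}(Y_{s-},y),n)}\sum_{i=1}^{n}f(z_{i})\,M(ds,du,dw,dv)$. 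Combining the three limits yields (\ref{4.3}).

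The step I expect to be the main obstacle is this last passage to the limit: to apply dominated convergence pathwise to the reproduction Poisson integral I must first know that the intensity $\int_{0}^{t}\langle Y_{s-},\alpha\rangle\,ds\cdot\sum_{n}\int_{0}^{1}p(\cdot,n)n\,dy$ is a.s.\ finite on $[0,t]$, i.e.\ that $\mathbf{E}\big[\sup_{0\le s\le t}Y_{s}(\infty)\big]<\infty$; this is the $(\alpha,g,G,\psi)$-analogue of Proposition \ref{prop3.4}, which is proved exactly as in Section \ref{section3} (and is implicit there). The remaining ingredients—the algebraic cancellation in the middle sum, the uniform-continuity estimates for the shifts, and the handling of the left sum via the vanishing of $f'$ at $0$—are routine adaptations of the proof of Proposition \ref{prop3.6}, and once the moment bound for $(Y_{t})$ is in place the whole argument goes through verbatim.
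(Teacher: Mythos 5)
Your proposal is correct and is essentially the proof the paper intends: the paper omits the argument for Proposition \ref{prop4.2}, stating only that it parallels Proposition \ref{prop3.6}, and your reconstruction (telescoping over a partition, applying Proposition \ref{prop4.1} on each block, cancelling the contributions over $[0,t_{j-1}]$ via the convention $f(y)=0$ for $y\le 0$, and passing to the limit using the a.s.\ finiteness of the atoms of $M_1$ and $M$ on $[0,t]$ guaranteed by the first-moment bound) is exactly that adaptation. The only point worth flagging is the one you already identified: the $(\alpha,g,G,\psi)$-analogue of Proposition \ref{prop3.4} must be in place, and it follows from the Section \ref{section3} estimates plus the finiteness of $L$ exactly as you say.
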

	
	\begin{proposition}\label{prop4.3}
		For $f \in C_{0}^{1}(0, \infty)$ with the first derivative vanishing at $0$ and $F \in C^{1}[0, \infty)$, let $F_{f}(\mu)=F(\langle\mu, f\rangle)$ and let
		\begin{align*}
			\mathscr{L}_{0} F_{f}(\mu)=
			&-\langle\mu, f^{\prime}\rangle F^{\prime}(\langle\mu, f\rangle)+\int_{\mathfrak{N}(0,\infty)^{\circ}}[F(\langle\mu, f\rangle+\langle \nu, f\rangle)-F(\langle\mu, f\rangle)] L(d\nu) \\
			&-\sum_{n \in \mathbb{N}} \int_{0}^{\infty} \alpha(y) p(y, n)\int_{(0, \infty)^{\infty}}\Big[F(\langle\mu, f\rangle)-F\Big(\langle\mu,f\rangle+\sum_{i=1}^{n}f(z_i)\Big)\Big] G^{\infty}(dz)\mu(d y).
		\end{align*}
		Then we have
		\begin{align}\label{4.12}
			F_{f}(Y_{t})=F_{f}(Y_{0})+\int_{0}^{t} \mathscr{L}_{0} F_{f}(Y_{s}) d s+\text {mart}.
		\end{align}
	\end{proposition}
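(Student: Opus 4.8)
The plan is to derive (\ref{4.12}) in the same way Proposition \ref{prop3.7} was obtained from Proposition \ref{prop3.6}, but now starting from the semimartingale decomposition of $\langle Y_t,f\rangle$ in Proposition \ref{prop4.2}. Write $\tilde M$ and $\tilde M_1$ for the compensated versions of $M$ and $M_1$; recall that the compensator of $M_1(ds,d\nu)$ is $ds\,L(d\nu)$ and that of $M(ds,du,dw,dv)$ is $ds\,du\,K(dw)\,dv$. Since $s\mapsto Y_s$ is c\`adl\`ag with at most countably many jumps (so the drift term $-\int_0^t\langle Y_{s-},f'\rangle\,ds$ may be written with $Y_s$ in place of $Y_{s-}$), I apply It\^o's formula for semimartingales with jumps to $F(\langle Y_t,f\rangle)$ using the decomposition (\ref{4.3}). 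The continuous finite-variation part contributes $-\int_0^t F'(\langle Y_s,f\rangle)\langle Y_s,f'\rangle\,ds$, which is the first term of $\mathscr{L}_0F_f$. The jump part splits according to which Poisson measure produced the jump: a jump of $M_1$ of size $\langle\nu,f\rangle$ contributes $F(\langle Y_{s-},f\rangle+\langle\nu,f\rangle)-F(\langle Y_{s-},f\rangle)$, and a jump of $M$ contributes $F(\langle Y_{s-},f\rangle+\sum_{i=1}^n f(z_i))-F(\langle Y_{s-},f\rangle)$.

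Next I compensate each jump sum. For the immigration jumps, replacing the integral against $M_1$ by the integral against $ds\,L(d\nu)$ produces exactly $\int_0^t\int_{\mathfrak{N}(0,\infty)^\circ}[F(\langle Y_s,f\rangle+\langle\nu,f\rangle)-F(\langle Y_s,f\rangle)]\,L(d\nu)\,ds$, which is the second term of $\mathscr{L}_0F_f$, plus a martingale built from $\tilde M_1$. For the branching jumps, replacing the integral against $M$ by $ds\,du\,K(dw)\,dv$ and integrating out $u$ over $(0,\langle Y_{s-},\alpha\rangle)$ and $v$ over $(0,p(A_\alpha(Y_{s-},y),n))$, then using Lemma \ref{lem3.1} to convert $\int_0^1(\cdots)\langle Y_{s-},\alpha\rangle\,dy$ into $\int_0^\infty\alpha(y)(\cdots)\,Y_{s-}(dy)$, and summing over $n$ with $K$ restricted to $\pi(dn)\,G^\infty(dz)$, gives precisely $-\sum_{n\in\mathbb N}\int_0^\infty\alpha(y)p(y,n)\int_{(0,\infty)^\infty}[F(\langle Y_s,f\rangle)-F(\langle Y_s,f\rangle+\sum_{i=1}^n f(z_i))]\,G^\infty(dz)\,Y_s(dy)$, the third term of $\mathscr{L}_0F_f$, again up to a $\tilde M$-martingale. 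Collecting the three drift contributions yields $F_f(Y_t)=F_f(Y_0)+\int_0^t\mathscr{L}_0F_f(Y_s)\,ds+\text{mart}$.

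The main obstacle is the integrability needed to justify that the compensated jump integrals are genuine martingales rather than merely local martingales. For the immigration part this is immediate because $L$ is a finite measure and $F$ together with $F'$ is bounded, so the integrand is bounded; the associated predictable quadratic variation is dominated by $C\,L(\mathfrak{N}(0,\infty)^\circ)\,t<\infty$. For the branching part, since $F\in C^1[0,\infty)$ has bounded derivative, $|F(\langle Y_{s-},f\rangle+\sum_{i=1}^n f(z_i))-F(\langle Y_{s-},f\rangle)|\le \|F'\|\,n\,\|f\|$, and the compensator mass up to time $t$ is controlled by $\sum_n\int_0^t\langle Y_{s-},\alpha\rangle\,ds\int_0^1 p(A_\alpha(Y_{s-},y),n)\,n^2\,dy$, which by Lemma \ref{lem3.1} equals $\int_0^t ds\int_0^\infty\alpha(y)\sum_n p(y,n)n^2\,Y_{s-}(dy)$. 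If one only assumes (\ref{2.1}) this needs the first-moment bound from Proposition \ref{prop3.9}'s analogue, namely $\mathbf E[\sup_{0\le s\le t}Y_s(\infty)]<\infty$ (which follows as in Proposition \ref{prop3.4} together with $\mathbf E[Y_0(\infty)]<\infty$ and finiteness of $L$); a clean localization along the stopping times $\eta_i=\inf\{t:Y_t(\infty)\ge i\}$ then upgrades the local martingale to a martingale after passing $i\to\infty$. (If one additionally assumes $\|g''(\cdot,1-)\|<\infty$, the second-moment estimate of Proposition \ref{prop2.2'}'s analogue makes this step routine.) The remaining routine point is the passage from $Y_{s-}$ to $Y_s$ in all the $ds$-integrals, which is valid because the two processes differ on a Lebesgue-null set of times.
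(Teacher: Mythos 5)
Your proposal is correct and follows essentially the same route as the paper: apply It\^o's formula to the semimartingale decomposition of $\langle Y_t,f\rangle$ from Proposition \ref{prop4.2}, compensate the two Poisson integrals (using Lemma \ref{lem3.1} to rewrite the branching compensator as an integral against $\alpha(y)p(y,n)Y_{s-}(dy)$), and verify the martingale property of the compensated term by a first moment estimate. The only cosmetic remark is that the $n^2$ quantity you write down is an $L^2$-type bound that is neither available under (\ref{2.1}) nor needed --- the $L^1$ criterion $\mathbf{E}\int|H|\,ds\,du\,K(dw)\,dv<\infty$, which your linear-in-$n$ bound together with the first moment estimate already gives, is exactly what the paper uses.
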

	
	\begin{proof}
		Let $\tilde{M}$ and $\tilde{M_{1}}$ denote the compensated measure of $M$ and $M_{1}$, respectively. By (\ref{4.3}) and It$\mathrm{\hat{o}}$'s formula we have
		\begin{small}
			\begin{align*}
				F(\langle Y_{t}, f\rangle)= F&(\langle Y_{0}, f\rangle)-\int_{0}^{t} F^{\prime}(\langle Y_{s}, f\rangle)\langle Y_{s}, f^{\prime}\rangle d s \\
				-&\int_{0}^{t}\int_{\mathfrak{N}(0,\infty)^{\circ}}[F(\langle Y_{s-}, f\rangle)-F(\langle Y_{s-}, f\rangle+\langle \nu, f\rangle)] M_{1}(ds,d\nu)\\
				-&\int_{0}^{t}\! \int_{0}^{\langle Y_{s-}, \alpha\rangle}\!\!\int_{E}\int_{0}^{p(A_{\alpha}(Y_{s-}, y), n)}\Big[F(\langle Y_{s-}, f\rangle)-F\Big(\langle Y_{s-}, f\rangle+\sum_{i=1}^{n}f(z_i)\Big)\Big] M(d s, d u, d w, d v) \\
				= F&(\langle Y_{0}, f\rangle)-\int_{0}^{t} F^{\prime}(\langle Y_{s}, f\rangle)\langle Y_{s}, f^{\prime}\rangle d s-N_{t}^{F}(f) \\
				-&\int_{0}^{t}ds\int_{\mathfrak{N}(0,\infty)^{\circ}}[F(\langle Y_{s-}, f\rangle)-F(\langle Y_{s-}, f\rangle+\langle \nu, f\rangle)] L(d\nu)\\
				-&\int_{0}^{t}\! d s\! \int_{0}^{\infty}\!\! \alpha(y) \sum_{n \in \mathbb{N}} p(y, n)\!\int_{(0, \infty)^{\infty}}\!\Big[F(\langle Y_{s-}, f\rangle)-F\Big(\langle Y_{s-}, f\rangle+\sum_{i=1}^{n}f(z_i)\Big)\Big]G^{\infty}(dz)Y_{s-}(dy)\\
				=F&(\langle Y_{0}, f\rangle)+\int_{0}^{t} \mathscr{L}_{0} F_{f}(Y_{s}) d s-N_{t}^{F}(f).
			\end{align*}
		\end{small}where
		\begin{align*}
			N_{t}^{F}(f)=&\! \int_{0}^{t}\! \int_{0}^{\langle Y_{s-}, \alpha\rangle}\!\!\! \int_{E} \int_{0}^{p(A_{\alpha}(Y_{s-}, y), n)}\!\Big[F(\langle Y_{s-}, f\rangle)-F\Big(\langle Y_{s-}, f\rangle+\sum_{i=1}^{n}f(z_i)\Big)\Big] \tilde{M}(d s, d u, d w, d v)\\
			&+\int_{0}^{t}\int_{\mathfrak{N}(0,\infty)^{\circ}}[F(\langle Y_{s-}, f\rangle)-F(\langle Y_{s-}, f\rangle+\langle \nu, f\rangle)] \tilde{M_{1}}(ds,d\nu),
		\end{align*}and $\langle X_{s}, f^{\prime} \rangle$ is well-defined since the first derivative of $f$ vanishes at $0$. A first moment estimate can check that $\{N_t^F(f): t \ge 0\}$ is a martingale. $\hfill\square$ \end{proof}

	\begin{theorem}\label{th4.1} The measure-valued process $(Y_t: t \ge 0)$ defined by $(\ref{4.2})$ is an $(\alpha,g,G,\psi)$-process. \end{theorem}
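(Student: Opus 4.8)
The plan is to follow the proof of Theorem \ref{th3.8}, starting from the martingale problem (\ref{4.12}) of Proposition \ref{prop4.3}. Write $e_f(\mu)=e^{-\langle\mu,f\rangle}$. For $f\in C_{0}^{1}(0,\infty)^{+}$ whose first derivative vanishes at $0$, specializing (\ref{4.12}) to $F_f=e_f$ and using the identity $\int_{\mathfrak{N}(0,\infty)^{\circ}}[e^{-\langle\mu,f\rangle-\langle\nu,f\rangle}-e^{-\langle\mu,f\rangle}]L(d\nu)=-e^{-\langle\mu,f\rangle}\psi(f)$ together with the computation already carried out in the proof of Theorem \ref{th3.8}, one gets
\begin{align*}
	\mathscr{L}_{0} e_{f}(\mu)=e^{-\langle\mu,f\rangle}\langle\mu,f'\rangle-e^{-\langle\mu,f\rangle}\int_{0}^{\infty}\alpha(y)\big[1-g(y,\langle G,e^{-f}\rangle)\big]\mu(dy)-e^{-\langle\mu,f\rangle}\psi(f).
\end{align*}
Thus passing from the pure branching process to the one with immigration merely adds the deterministic drift $-e^{-\langle\mu,f\rangle}\psi(f)$.

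Next, fix $T\ge 0$ and such an $f$, let $u_{t}f$ be the solution of (\ref{2.7'}), and put
\begin{align*}
	G_{t}=\exp\Big\{\!-\langle Y_{t},u_{T-t}f\rangle-\int_{0}^{T-t}\psi(u_{s}f)\,ds\Big\},\qquad 0\le t\le T.
\end{align*}
I would show that $t\mapsto G_{t}$ is an $(\mathscr{F}_{t}^{Y})$-martingale by the time-discretization and mean-value argument used for Theorem \ref{th3.8}: decompose $G_{t}-G_{0}$ along a grid $\{i/k\}$, apply (\ref{4.12}) with $e_{u_{T-t\wedge i/k}f}$ on each subinterval, and apply the mean-value theorem to control the variation in the time parameter of $u_{T-t}f$ and of $\int_{0}^{T-t}\psi(u_{s}f)\,ds$. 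By (\ref{2.7}) we have $\frac{\partial}{\partial t}u_{t}f(x)=-\frac{\partial}{\partial x}u_{t}f(x)+\alpha(x)[1-g(x,\langle G,e^{-u_{t}f}\rangle)]$, so the transport and branching parts of the drift cancel exactly as in Theorem \ref{th3.8}, leaving only $-G_{s}\psi(u_{T-s}f)$; this residual is in turn cancelled by $\frac{d}{dt}\big(\!-\!\int_{0}^{T-t}\psi(u_{s}f)\,ds\big)=\psi(u_{T-t}f)$, the contribution of the deterministic factor of $G_{t}$. Letting $k\to\infty$ removes the drift, and the surviving stochastic integrals against the compensated measures of $M$ and $M_{1}$ form a martingale by a first-moment estimate, exactly as in Proposition \ref{prop4.3}.

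Since $u_{0}f=f$, the martingale property of $G$ yields, for $T\ge t\ge 0$,
\begin{align*}
	\mathbf{E}\big[e^{-\langle Y_{T},f\rangle}\,\big|\,\mathscr{F}_{t}^{Y}\big]=\exp\Big\{\!-\langle Y_{t},u_{T-t}f\rangle-\int_{0}^{T-t}\psi(u_{s}f)\,ds\Big\},
\end{align*}
which is the right-hand side of (\ref{4.1}) with $(\sigma,t)$ replaced by $(Y_{t},T-t)$, i.e. equals $\int_{\mathfrak{N}(0,\infty)}e^{-\langle\nu,f\rangle}P_{T-t}(Y_{t},d\nu)$. Since Laplace functionals determine the law of a random measure and $(Y_{t})$ is already known to be strong Markov, this identifies its transition semigroup with $(P_{t})_{t\ge0}$, first for $f\in C_{0}^{1}(0,\infty)^{+}$ with first derivative vanishing at $0$ and then for all $f\in B(0,\infty)^{+}$ by the mollification-of-step-functions approximation used in Proposition \ref{prop2.1}; the same approximation passes to the limit in $\int_{0}^{T-t}\psi(u_{s}f)\,ds$ by dominated convergence, using $0\le u_{s}f\le\|f\|e^{\beta s}$ and the finiteness of $L$. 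The main obstacle I expect is the same one met in Theorem \ref{th3.8}: making the discretization rigorous needs enough regularity of $(t,x)\mapsto u_{t}f(x)$ for (\ref{2.7}) and for $e_{u_{T-t}f}$ to be an admissible test functional, which is provided by Proposition \ref{prop2.1} and its surrounding estimates. The immigration ingredient itself is mild, since $\psi$ is a bounded functional built from the finite measure $L$, so the extra drift is deterministic and continuous in time and the additional compensated-Poisson integral is controlled by the first-moment bounds (Proposition \ref{prop4.0} and the Section \ref{section3} analogues).
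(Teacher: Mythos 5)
Your proposal is correct and follows essentially the same route as the paper: the paper's proof of Theorem \ref{th4.1} consists precisely of invoking Proposition \ref{prop4.3} and then asserting, "by a modification of the proof of Theorem \ref{th3.8}," that $t \mapsto \exp\{-\langle Y_t, u_{T-t} f\rangle - \int_0^{T-t}\psi(u_s f)\,ds\}$ is a martingale, which is exactly the functional $G_t$ you construct. Your write-up in fact supplies more detail than the paper does, correctly identifying the cancellation between the immigration drift $-G_s\psi(u_{T-s}f)$ and the derivative $+\psi(u_{T-t}f)$ of the deterministic exponent.
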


	\begin{proof} By Proposition \ref{prop4.3} we can see $(Y_{t}: t \ge 0)$ solves the martingale problem (\ref{4.12}). Let $\mathscr{F}_t=\sigma\{Y_s: 0 \le s \le t\}$. By a modification of the proof of Theorem \ref{th3.8}, we can see that
		\begin{align*}
			t \mapsto \exp \Big\{\!-\langle Y_t, u_{T-t} f\rangle-\int_0^{T-t} \psi(u_s f) d s\Big\}
		\end{align*}
		is an $(\mathscr{F}_t)$-martingale. Then $(Y_t: t \ge 0)$ is a Markov process with transition semigroup $(P_t)_{t \ge 0}$ defined by (\ref{4.1}), which completes the proof. $\hfill\square$
	\end{proof}	
	\begin{corollary}\label{cor4}
		The martingale problem (\ref{4.12}) is well-posed.
	\end{corollary}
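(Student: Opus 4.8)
The plan is to obtain the well-posedness of the martingale problem (\ref{4.12}) from Theorem \ref{th4.1}, in exactly the same way that Corollary \ref{cor3} follows from Theorem \ref{th3.8}. Here well-posedness means the existence together with the uniqueness in law of a c\`{a}dl\`{a}g solution, for every prescribed law of the initial state. Existence is already in hand: by Proposition \ref{prop4.3} the measure-valued process $(Y_t:t\ge0)$ constructed from the stochastic equation (\ref{4.2}) solves (\ref{4.12}), and by Theorem \ref{th4.1} it is an $(\alpha,g,G,\psi)$-process, that is, a Markov process with transition semigroup $(P_t)_{t\ge0}$ given by (\ref{4.1}). So the only point to settle is that \emph{any} c\`{a}dl\`{a}g solution $(\tilde Y_t:t\ge0)$ of (\ref{4.12}), defined on some filtered probability space with filtration $(\mathscr{G}_t)$ and with $\tilde Y_0$ distributed as $Y_0$, has the finite-dimensional distributions prescribed by (\ref{4.1}).

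To settle this I would repeat, with the obvious modifications, the argument used to prove Theorem \ref{th3.8}. First take the special test function $F_f(\mu)=e_f(\mu):=e^{-\langle\mu,f\rangle}$, i.e.\ $F(r)=e^{-r}\in C^1[0,\infty)$, with $f\in C_0^1(0,\infty)^{+}$ whose first derivative vanishes at $0$; a direct computation from the expression for $\mathscr{L}_0$ in Proposition \ref{prop4.3}, together with the definition of $\psi$, gives
\begin{align*}
\mathscr{L}_0 e_f(\mu)=e^{-\langle\mu,f\rangle}\langle\mu,f'\rangle-e^{-\langle\mu,f\rangle}\int_0^\infty\alpha(y)\big[1-g(y,\langle G,e^{-f}\rangle)\big]\mu(dy)-e^{-\langle\mu,f\rangle}\psi(f),
\end{align*}
the only term not already present in the proof of Theorem \ref{th3.8} being the last one, which comes from the immigration part of $\mathscr{L}_0$. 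For fixed $T>0$ I would then substitute $u_{T-t}f$ for $f$ and run the time-discretization and mean-value argument of Theorem \ref{th3.8}, using (\ref{2.7}) for $\partial_t u_tf$: the spatial-gradient and $\alpha$-contributions cancel exactly as there, and the new contribution $-e^{-\langle\mu,u_{T-t}f\rangle}\psi(u_{T-t}f)$ is cancelled by the corresponding contribution to the time-derivative, since $\frac{d}{dt}\big(-\int_0^{T-t}\psi(u_sf)\,ds\big)=\psi(u_{T-t}f)$. The conclusion is that
\begin{align*}
t\longmapsto \exp\Big\{-\langle\tilde Y_t,u_{T-t}f\rangle-\int_0^{T-t}\psi(u_sf)\,ds\Big\}
\end{align*}
is a $(\mathscr{G}_t)$-martingale, first for such smooth $f$ and then for every $f\in B(0,\infty)^{+}$, by the same monotone and dominated convergence approximation used in Proposition \ref{prop2.1} and at the end of the proof of Theorem \ref{th3.8}.

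Taking conditional expectations in this identity gives $\mathbf{E}\big[e^{-\langle\tilde Y_T,f\rangle}\mid\mathscr{G}_t\big]=\exp\{-\langle\tilde Y_t,u_{T-t}f\rangle-\int_0^{T-t}\psi(u_sf)\,ds\}$ for all $T\ge t\ge0$ and all $f\in B(0,\infty)^{+}$; since Laplace functionals determine probability measures on $\mathfrak{N}(0,\infty)$, this shows that $(\tilde Y_t)$ is a Markov process with transition semigroup $(P_t)_{t\ge0}$ of (\ref{4.1}), and hence its finite-dimensional distributions are uniquely determined by the law of $\tilde Y_0$. Together with the existence provided by Theorem \ref{th4.1}, this proves that the martingale problem (\ref{4.12}) is well-posed. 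The step I expect to be the main obstacle is precisely the passage from the martingale problem to the exponential martingale: one must verify that $u_{T-s}f$ belongs to the admissible class of test functions and organize the time-discretization exactly as in the proof of Theorem \ref{th3.8}, keeping the increment that freezes the time index apart from the one that shifts it; the computation of $\mathscr{L}_0 e_f$ and the cancellation of the $\psi$-term are routine.
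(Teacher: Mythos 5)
Your proposal is correct and follows exactly the route the paper intends: the paper states Corollary~\ref{cor4} without a separate proof, as the immediate analogue of Corollary~\ref{cor3}, relying on the fact that the argument in Theorem~\ref{th4.1} (the computation of $\mathscr{L}_0 e_f$ with the extra $-e^{-\langle\mu,f\rangle}\psi(f)$ term and the time-discretization showing $t\mapsto\exp\{-\langle Y_t,u_{T-t}f\rangle-\int_0^{T-t}\psi(u_sf)\,ds\}$ is a martingale) applies to \emph{any} solution of the martingale problem, so that its transition semigroup must be $(P_t)_{t\ge0}$ of (\ref{4.1}). You have spelled out precisely that argument, including the cancellation of the $\psi$-term against $\frac{d}{dt}\big(-\int_0^{T-t}\psi(u_sf)\,ds\big)$ and the caveat about admissibility of $u_{T-s}f$ as a test function, which the paper also leaves implicit.
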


	We now discuss the ergodicity of the $(\alpha,g,G,\psi)$-process. To this end, we need to give an estimate on the first moment of $X$. By letting $f(x)=1_{(0,\infty)}(x)$ in (\ref{2.9}) and then integrating both sides with respect to $G(dx)$, we have
	\begin{align*}
		\langle G,\pi_{t} 1 \rangle
		&=
		\int_{0}^{\infty} 1_{(0,\infty)}(x-t) G(dx) + \int_{0}^{\infty} G(dx) \int_{0}^{t} \alpha(x-s) g'(x-s,1-) \langle G,\pi_{t-s}1 \rangle ds\\
		&=
		1-G(t) + \int_{0}^{t} \langle G,\pi_{t-s}1 \rangle ds \int_{s}^{\infty} \alpha(x-s) g'(x-s,1-) G(dx).
	\end{align*}
	
	We say a nonnegative function $f$ on $[0,\infty)$ is \textit{directly Riemann integrable over $[0,\infty)$ if for any $a>0$,
		\begin{align*}
			\sum_{n=1}^{\infty} a M_n(a)<\infty\quad \text{and}\quad \lim_{a\rightarrow 0}\sum_{n=1}^{\infty} a M_n(a)=\lim_{a\rightarrow 0}\sum_{n=1}^{\infty} a m_n(a),
		\end{align*}
		where $M_n(a)=\sup\{f(x)|(n-1)a\le x\le na\}$ and $m_n(a)=\inf\{f(x)|(n-1)a\le x\le na\}$.} For the convenience of statement of the results, we formulate the following condition:

	\begin{condition}\label{cond1} (i)~ The $(\alpha,g,G,\psi)$-process is \textit{subcritical}, which means that
		\begin{align*}
			\rho_1:= \int_{0}^{\infty} ds \int_{s}^{\infty} \alpha(x-s) g'(x-s,1-) G(dx)< 1.
		\end{align*}
		
		(ii)~ There exists a constant $\alpha_{1}\in (0,\infty)$ such that the function $e^{\alpha_{1} t}[1-G(t)]$ is directly Riemann integrable over $[0,\infty)$ and
		\begin{align*}
			\int_{0}^{\infty} e^{\alpha_{1} s} ds \int_{s}^{\infty} \alpha(x-s) g'(x-s,1-) G(dx) = 1.
		\end{align*}
	\end{condition}

	\begin{example} Consider the case where $\alpha(x)\equiv \alpha>0$ and $g'(x,1-)\equiv m>0$ for $x\in (0,\infty)$. Let $G$ be the exponential distribution with parameter $\lambda>\alpha m$. Then Condition \ref{cond1} holds with $\alpha_{1}= \lambda-\alpha m$. In this case, the nonnegative function $e^{\alpha_{1} t}[1-G(t)]= e^{(\alpha_{1}-\lambda) t}$ is non-increasing and integrable over $[0,\infty)$ in the ordinary sense, then $e^{\alpha_{1} t}[1-G(t)]$ is directly Riemann integrable over $[0,\infty)$ by a general result; see, e.g., Athreya and Ney \cite[p.146]{Athreya72}.\end{example}

	The next proposition follows by the general result on defective renewal equation; see, e.g., Jagers \cite[Theorem 5.2.9]{Jagers75}.

	\begin{proposition}\label{prop4.2'} Suppose that Condition \ref{cond1} holds. Then
		\begin{align}\label{4.4'}
			\lim_{t\rightarrow \infty} e^{\alpha_1 t} \langle G,\pi_t 1 \rangle
			=
			\frac{1}{b}\int_{0}^{\infty}e^{\alpha_1 s}[1-G(s)] ds=:a_1 <\infty,
		\end{align}
		where
		\begin{align*}
			b = {\int_{0}^{\infty} s e^{\alpha_1 s} ds \int_{s}^{\infty} \alpha(x-s) g'(x-s,1-) G(dx)}.
		\end{align*}
		(Here we understand that $a_1 =0$ if $b=+\infty$.) \end{proposition}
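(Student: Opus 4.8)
The plan is to read the identity established just above Condition~\ref{cond1} as a \emph{defective renewal equation} and then to renormalize it by the factor $e^{\alpha_1 t}$ so that it becomes proper, after which the Key Renewal Theorem in the form of Jagers \cite[Theorem 5.2.9]{Jagers75} applies directly. Write $h(t)=\langle G,\pi_t 1\rangle$, $z(t)=1-G(t)$, and
\[
\ell(s)=\int_{s}^{\infty}\alpha(x-s)g'(x-s,1-)G(dx),
\]
so that the displayed identity reads $h(t)=z(t)+\int_0^t h(t-s)\ell(s)\,ds$. From the bound $\|\pi_t 1\|\le e^{\beta t}$ obtained in the proof of Proposition~\ref{prop3.11} (or from Proposition~\ref{prop2.3}), the function $h$ is nonnegative, measurable and locally bounded, hence it is the unique locally bounded solution of this equation. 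By Condition~\ref{cond1}(i) the total mass of $\ell$ is $\rho_1<1$, so the equation is defective; Condition~\ref{cond1}(ii) moreover gives $\int_0^\infty e^{\alpha_1 s}\ell(s)\,ds=1$, which in particular excludes $\ell\equiv 0$.

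First I would multiply through by $e^{\alpha_1 t}$, using $e^{\alpha_1 t}=e^{\alpha_1(t-s)}e^{\alpha_1 s}$ inside the convolution, and set $\tilde h(t)=e^{\alpha_1 t}h(t)$, $\tilde z(t)=e^{\alpha_1 t}[1-G(t)]$, $\tilde\ell(s)=e^{\alpha_1 s}\ell(s)$. This yields the renewal equation $\tilde h(t)=\tilde z(t)+\int_0^t \tilde h(t-s)\tilde\ell(s)\,ds$, where now $\tilde\ell(s)\,ds$ is a probability measure on $[0,\infty)$ by Condition~\ref{cond1}(ii); since $\tilde\ell$ has a density with respect to Lebesgue measure, this distribution is non-arithmetic (indeed spread out). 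Consequently $\tilde h=U\ast\tilde z$ with $U=\sum_{n\ge 0}\tilde\ell^{\ast n}$ the associated renewal measure.

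Next I would apply the Key Renewal Theorem. Condition~\ref{cond1}(ii) supplies precisely the hypothesis needed on the inhomogeneous term, namely that $\tilde z(t)=e^{\alpha_1 t}[1-G(t)]$ is directly Riemann integrable over $[0,\infty)$ (which also forces $\int_0^\infty \tilde z<\infty$); combined with the non-arithmetic renewal measure, the theorem gives
\[
\lim_{t\to\infty}\tilde h(t)=\frac{\int_0^\infty e^{\alpha_1 s}[1-G(s)]\,ds}{\int_0^\infty s\,\tilde\ell(s)\,ds}.
\]
Finally I would identify the mean of $\tilde\ell$ by Fubini:
\[
\int_0^\infty s\,\tilde\ell(s)\,ds=\int_0^\infty s e^{\alpha_1 s}\,ds\int_{s}^{\infty}\alpha(x-s)g'(x-s,1-)G(dx)=b,
\]
which yields $\lim_{t\to\infty}e^{\alpha_1 t}\langle G,\pi_t 1\rangle=b^{-1}\int_0^\infty e^{\alpha_1 s}[1-G(s)]\,ds=a_1$; when $b=+\infty$ the right-hand side degenerates to $0$, consistent with the stated convention.

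The main obstacle I anticipate is not the renewal-theoretic machinery, which is standard once the equation is in the right form, but rather the bookkeeping: verifying that $h$ is measurable and locally bounded so that it is genuinely the unique solution represented by $U\ast\tilde z$, checking that Condition~\ref{cond1}(ii) delivers exactly the two hypotheses (direct Riemann integrability of $\tilde z$ and the normalization making $\tilde\ell$ a probability density) in the precise form required by the cited version of the theorem, and handling the degenerate cases ($\ell$ not identically zero, $b$ possibly infinite) cleanly.
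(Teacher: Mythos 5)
Your proposal is correct and follows essentially the same route as the paper, which simply invokes the general result on defective renewal equations (Jagers, Theorem 5.2.9) applied to the identity $\langle G,\pi_t 1\rangle = 1-G(t)+\int_0^t \langle G,\pi_{t-s}1\rangle\,\ell(s)\,ds$ derived just before Condition \ref{cond1}. You have merely spelled out the standard exponential tilting and Key Renewal Theorem argument that underlies that citation, and all the details (defectiveness from Condition \ref{cond1}(i), normalization and direct Riemann integrability from Condition \ref{cond1}(ii), identification of $b$ as the mean of the tilted kernel) check out.
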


	A sufficient condition for the ergodicity of the $(\alpha,g,G,\psi)$-process is given in the next theorem.

	\begin{theorem}\label{th4.2} Suppose that Condition \ref{cond1} holds. Then $P_t(\sigma, \cdot)$ converges as $t \rightarrow \infty$ to a probability measure $\eta$ on $\mathfrak{N}(0,\infty)$ for every $\sigma \in \mathfrak{N}(0,\infty)$ if
		\begin{align}\label{4.4}
			\int_{\mathfrak{N}(0,\infty)} L(d \nu) \int_{0}^{\infty} e^{\alpha_{1} x} \nu(dx) <\infty,
		\end{align}
		where $\alpha_{1}$ given as in Condition \ref{cond1}. In this case, the Laplace transform of $\eta$ is given by
		\begin{align}\label{4.5}
			\int_{\mathfrak{N}(0,\infty)} e^{-\langle \nu, f\rangle} \eta(d \nu)=\exp \Big\{\!-\int_0^{\infty} \psi(u_s f) d s\Big\},\quad f\in B(0,\infty)^{+}.
		\end{align}
	\end{theorem}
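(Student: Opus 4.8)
The plan is to pass to the limit $t\to\infty$ in the identity (\ref{4.1}) and to recognize the limiting exponent as the Laplace functional of a probability measure on $\mathfrak{N}(0,\infty)$. Throughout, write $m(y)=\alpha(y)g'(y,1-)$, so that $\|m\|=\beta<\infty$. The first step is to show that, for every $f\in B(0,\infty)^{+}$, $u_tf(x)\to0$ as $t\to\infty$ for each $x>0$, and hence $\langle\sigma,u_tf\rangle\to0$ for every $\sigma\in\mathfrak{N}(0,\infty)$, since $\sigma$ is a finite sum of unit masses. By Proposition \ref{prop2.3} we have $u_tf\le\pi_tf$, and by Proposition \ref{prop4.2'} there is $c_0<\infty$ with $\langle G,\pi_r1\rangle\le c_0e^{-\alpha_1 r}$ for all $r\ge0$. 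Since $\pi_r$ is a positive linear kernel, $\langle G,\pi_rf\rangle\le\|f\|\langle G,\pi_r1\rangle\le\|f\|c_0e^{-\alpha_1 r}$; feeding this into the renewal equation (\ref{2.9}) and using that $m(x-s)=0$ for $s\ge x$ and $f(x-t)=0$ for $t\ge x$ by our conventions, one gets $\pi_tf(x)=\int_0^x m(x-s)\langle G,\pi_{t-s}f\rangle\,ds\le\beta\|f\|c_0\,x\,e^{\alpha_1 x}e^{-\alpha_1 t}$ for $t\ge x$, which tends to $0$.

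The second step, where the moment hypothesis (\ref{4.4}) is used, is to show that $\int_0^\infty\psi(u_sf)\,ds<\infty$ for every $f\in B(0,\infty)^{+}$. From $1-e^{-a}\le a$ and $u_sf\le\pi_sf$ we get $\psi(u_sf)\le\int_{\mathfrak{N}(0,\infty)^{\circ}}\langle\nu,\pi_sf\rangle\,L(d\nu)$, so by Tonelli's theorem it suffices to bound $\int_0^\infty\pi_sf(x)\,ds$ appropriately in $x$. Splitting the $s$-integral at $s=x$ and again using (\ref{2.9}) with $\langle G,\pi_rf\rangle\le\|f\|c_0e^{-\alpha_1 r}$ and $\|m\|=\beta$, a short computation yields a bound of the form $\int_0^\infty\pi_sf(x)\,ds\le C_f(1+x)$, with $C_f$ depending only on $\|f\|,\beta,\alpha_1,c_0$. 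Since $\sup_{x\ge0}(1+x)e^{-\alpha_1 x}<\infty$, it follows that $\int_0^\infty\psi(u_sf)\,ds\le C_f'\int_{\mathfrak{N}(0,\infty)^{\circ}}L(d\nu)\int_0^\infty e^{\alpha_1 x}\nu(dx)<\infty$ by (\ref{4.4}); running the same estimate with $f$ replaced by $\theta f$ shows moreover that $\int_0^\infty\psi(u_s(\theta f))\,ds\to0$ as $\theta\downarrow0$.

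Combining the two steps and letting $t\to\infty$ in (\ref{4.1})—with monotone convergence for $\int_0^t\psi(u_sf)\,ds$, whose integrand is nonnegative—gives, for every $f\in B(0,\infty)^{+}$,
\[
\lim_{t\to\infty}\int_{\mathfrak{N}(0,\infty)}e^{-\langle\nu,f\rangle}P_t(\sigma,d\nu)=\exp\Big\{-\int_0^\infty\psi(u_sf)\,ds\Big\}=:\Phi(f).
\]
Restricted to $f\in C_0(0,\infty)^{+}$, the map $\nu\mapsto e^{-\langle\nu,f\rangle}$ is bounded and continuous on $\mathfrak{M}(0,\infty)$, and $\Phi(\theta f)\to1$ as $\theta\downarrow0$ by the last estimate of the previous paragraph. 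Hence the continuity theorem for Laplace functionals of (integer-valued) random measures (see, e.g., Li \cite{Li22}) shows that $P_t(\sigma,\cdot)$ converges weakly, as $t\to\infty$, to a probability measure $\eta$ on $\mathfrak{N}(0,\infty)$ with Laplace functional $\Phi$; this gives (\ref{4.5}) first for $f\in C_0(0,\infty)^{+}$ and then for all $f\in B(0,\infty)^{+}$ by monotone approximation as in the proof of Proposition \ref{prop2.1}. One may alternatively exhibit $\eta$ directly as the law of the superposition $\sum_k X^{(k)}_{s_k}$, where $\{(s_k,\nu_k)\}$ is a Poisson configuration on $(0,\infty)\times\mathfrak{N}(0,\infty)^{\circ}$ with intensity $ds\,L(d\nu)$ and, conditionally on it, the $X^{(k)}_{s_k}$ are independent copies of the $(\alpha,g,G)$-process at time $s_k$ started from $\nu_k$: the superposition is a.s.\ a finite measure because its mean total mass equals $\int_0^\infty ds\int L(d\nu)\langle\nu,\pi_s1\rangle<\infty$, and the exponential formula together with (\ref{2.2}) recovers $\Phi$ as its Laplace functional.

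I expect the main obstacle to be the second step: producing a bound on $\int_0^\infty\pi_sf(x)\,ds$ that is at once uniform enough in $s$ and mild enough in $x$ to be integrable against $L(d\nu)\,\nu(dx)$ under (\ref{4.4}). This hinges on the sharp exponential decay of $\langle G,\pi_t1\rangle$ furnished by Proposition \ref{prop4.2'} (equivalently, by the defective-renewal input behind Condition \ref{cond1}), and the work is in carrying that decay through the renewal equation (\ref{2.9}). Once $\Phi$ is known to be continuous at $0$, the passage from convergence of Laplace functionals to weak convergence, and the identification (\ref{4.5}) of the limit, are routine.
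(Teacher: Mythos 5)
Your proposal is correct and follows essentially the same route as the paper: it combines $u_tf\le\pi_tf$ (Proposition \ref{prop2.3}) with the exponential decay of $\langle G,\pi_t1\rangle$ from Proposition \ref{prop4.2'} to kill the term $\langle\sigma,u_tf\rangle$ and to bound $\int_0^\infty\psi(u_sf)\,ds$ via $1-e^{-a}\le a$ and the renewal equation (\ref{2.9}), exactly as the paper does before invoking the continuity theorem for Laplace functionals. The only point to tidy up is that Proposition \ref{prop4.2'} gives only the limit of $e^{\alpha_1 t}\langle G,\pi_t1\rangle$, so the uniform bound $\langle G,\pi_r1\rangle\le c_0e^{-\alpha_1 r}$ for \emph{all} $r\ge0$ needs the local boundedness of $\pi_r1$ on compacts (the paper secures this by splitting at a finite time $T$), which is immediate but should be stated.
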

	
	\begin{proof} We note that under the condition \ref{cond1}, $\lim_{t\rightarrow \infty} \langle G,\pi_{t} 1\rangle = a_{1}e^{-\alpha_{1}t}$ with $a_{1}$ given as in Proposition \ref{prop4.2'}. Then there exists $T>0$ and $C_{1}\in(0,\infty)$ such that for $f\in B(0,\infty)^{+}$,
		\begin{align*}
			\langle G,\pi_{t} f\rangle \le C_{1}\|f\|e^{-\alpha_{1}t},\quad t>T.
		\end{align*}
		For any $t\in [0,T]$ and $f\in B(0,\infty)^{+}$ we have $\|\pi_{t} f\|\le e^{\beta T}\|f\|$ by Proposition \ref{prop3.11}, it follows that for any $f\in B(0,\infty)^{+}$,
		\begin{align*}
			\langle G,\pi_{t} f\rangle \le C_{2}\|f\|e^{-\alpha_{1}t},\quad t\in [0,T],
		\end{align*}
		where $C_{2}=e^{(\beta +\alpha_{1})T}\in(0,\infty)$.
		Then by (\ref{2.9}) for any $x>0$ and $f\in B(0,\infty)^{+}$ we have
		\begin{align}\label{4.6}
			\pi_{t}f(x)
			\le f(x-t)+\beta C_T \|f\| e^{-\alpha_{1} t} \int_{0}^{t\wedge x} e^{\alpha_{1} s} ds,\quad t\ge 0,
		\end{align}
		where $C_T=\max \{C_1,C_{2}\}\in (0,\infty)$.
		Suppose that (\ref{4.4}) holds. For any $f\in B(0,\infty)^{+}$ we see from Propositions \ref{prop2.2} and \ref{prop4.2'} that $\mathbf{P}_{\sigma}[\langle X_{t},f\rangle]\rightarrow 0$ as $t\rightarrow\infty$. Take any $a>\|f\|$, by (\ref{4.6}) and Proposition \ref{prop2.3} it is elementary to see that
		\begin{small}
			\begin{align*}
				\int_0^{\infty}\!\! \psi(u_s f) d s
				&=\int_0^{\infty} d s \int_{\mathfrak{N}(0,\infty)^{\circ}}(1-e^{-\langle \nu, u_s f\rangle}) L(d \nu) \\
				&\le\int_0^{\infty} d s \int_{\mathfrak{N}(0,\infty)^{\circ}}(1-e^{-\langle \nu, \pi_s f\rangle}) L(d \nu)\\
				&\le \int_{\mathfrak{N}(0,\infty)^{\circ}} L(d \nu) \int_0^{\infty} (1-e^{-\langle \nu, \pi_s f\rangle}) d s\\
				&\le \int_{\mathfrak{N}(0,\infty)^{\circ}} L(d \nu) \int_0^{\infty} \big(1-e^{-\int_{0}^{\infty} f(x-s) \nu(dx)-\beta C_{T}\|f\|\int_{0}^{\infty} \nu(dx) \int_{0}^{x}e^{-\alpha_{1}(s-r)} dr}\big) d s\\
				&= \int_{\mathfrak{N}(0,\infty)^{\circ}} L(d \nu) \int_0^{\infty} \big(1-e^{-\int_{0}^{\infty} f(x-s) \nu(dx)-\beta C_{T}\|f\|\alpha_{1}^{-1}e^{-\alpha_{1} s}\int_{0}^{\infty} (e^{\alpha_{1} x}-1) \nu(dx)}\big) d s\\
				&\le \int_{\mathfrak{N}(0,\infty)^{\circ}} L(d \nu) \int_0^{\infty} \Big(\int_{0}^{\infty} f(x-s) \nu(dx)-\beta C_{T}\|f\|\alpha_{1}^{-1}e^{-\alpha_{1} s}\int_{0}^{\infty} (e^{\alpha_{1} x}-1) \nu(dx)\Big) d s\\
				&= \|f\|\int_{\mathfrak{N}(0,\infty)^{\circ}}\! L(d \nu) \int_0^{\infty} \nu((s,\infty)) ds +
				\beta C_{T}\|f\|\alpha_{1}^{-2}\! \int_{\mathfrak{N}(0,\infty)^{\circ}} L(d \nu)\int_{0}^{\infty} (e^{\alpha_{1} x}-1) \nu(dx)\\
				&<\infty .
			\end{align*}
		\end{small}Then, as $t \rightarrow \infty$,
		\begin{align*}
			\int_0^t \psi(u_s f) d s&=\int_0^t d s \int_{\mathfrak{N}(0,\infty)^{\circ}}(1-\mathrm{e}^{-\langle \nu, u_s f\rangle}) L(d \nu) \\
			& \rightarrow \int_0^{\infty} d s \int_{\mathfrak{N}(0,\infty)^{\circ}}(1-\mathrm{e}^{-\langle \nu, u_s f\rangle}) L(d \nu)=\int_0^{\infty} \psi(u_{\mathrm{s}} f) d s<\infty .
		\end{align*}
		From (\ref{4.1}) we infer that $P_t(\sigma, \cdot)$ converges as $t \rightarrow \infty$ to a probability measure $\eta$ defined by (\ref{4.5}); see, e.g., Li \cite[Theorem 1.20]{Li22}.
		$\hfill\square$
	\end{proof}

	\section{The occupation time processes}\label{setion5}

	Let $X=(\Omega, \mathscr{F}, \mathscr{F}_{r,t}, X_{t}, \mathbf{P}_{r,\sigma})$ be a c\`{a}dl\`{a}g realization of the $(\alpha,g,G)$-process started from time $r\ge 0$. In view of (\ref{2.5}) and (\ref{2.7'}), for any $t\ge r \ge 0$ and $f\in B(0,\infty)^{+}$ we have
	\begin{align}\label{5.19}
		\mathbf{P}_{r,\sigma} \exp \{-\langle X_{t},f\rangle\}=\exp \{-\langle\sigma, v_{r}\rangle\},
	\end{align}
	where $(r,x)\mapsto v_{r}(x):=u_{t-r}f(x)$ is the unique bounded positive solution on $[0,t]\times(0,\infty)$ of
	\begin{align}\label{5.20}
		v_{r}(x)=f(x-t+r)+\int_{r}^t \alpha(x-s+r) [1-g(x-s+r,\langle G, e^{-v_s}\rangle)] d s..
	\end{align}

	\begin{proposition}\label{prop5.14} Suppose that $\{s_1<\cdots<s_n\} \subset[0, \infty)$ and $\{f_1, \ldots, f_n\} \subset B(0,\infty)^{+}$. Then we have
		\begin{align}\label{5.21}
			\mathbf{P}_{r,\sigma} \exp \Big\{\!-\sum_{j=1}^n \langle X_{s_j},f_j\rangle 1_{\{r \le s_j\}}\Big\}=\exp \{-\langle\sigma, v_{r}\rangle\},\quad 0 \le r \le s_n,
		\end{align}
		where $(r,x) \mapsto v_r(x)$ is the unique bounded positive solution on $[0, s_n] \times (0,\infty)$ of
		\begin{align}\label{5.22}
			v_{r}(x) = \sum\limits_{j=1}^n f_j(x-s_j+r) 1_{\{r \le s_j\}} +\int_{r}^{s_{n}} \alpha(x-s+r) [1-g(x-s+r,\langle G, e^{-v_s}\rangle)] ds.
		\end{align}
	\end{proposition}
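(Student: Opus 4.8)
The statement is the multi-time analogue of the single-time Laplace functional formula \eqref{5.19}–\eqref{5.20}, and the plan is to prove it by backward induction on the number of time points $n$, peeling off the largest time $s_n$ and using the Markov property of $X$ together with the single-time identity \eqref{5.19} as the engine at each step. The case $n=1$ is exactly \eqref{5.19}–\eqref{5.20} (with $t=s_1$ there and $f=f_1$), so there is nothing to prove. For the inductive step, fix $0\le r\le s_n$ and consider two regimes.

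\textbf{Step 1 (the regime $s_{n-1}<r\le s_n$).} Here only the last summand survives, $\sum_{j=1}^n\langle X_{s_j},f_j\rangle 1_{\{r\le s_j\}}=\langle X_{s_n},f_n\rangle$, so \eqref{5.21} reduces to \eqref{5.19} with $t=s_n$, $f=f_n$; correspondingly \eqref{5.22} reduces to \eqref{5.20}. So on this interval $v_r = u_{s_n-r}f_n$ and the claim holds, which also supplies the ``initial'' data $v_{s_{n-1}}$ for the next step.

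\textbf{Step 2 (the regime $r\le s_{n-1}$).} Condition on $\mathscr{F}_{r,s_{n-1}}$ and use the Markov property at time $s_{n-1}$: the inner conditional expectation is
\[
\mathbf{P}_{s_{n-1},X_{s_{n-1}}}\exp\Big\{\!-\langle X_{s_n},f_n\rangle\Big\}=\exp\{-\langle X_{s_{n-1}},\,u_{s_n-s_{n-1}}f_n\rangle\}
\]
by \eqref{5.19}. Hence
\[
\mathbf{P}_{r,\sigma}\exp\Big\{\!-\sum_{j=1}^n\langle X_{s_j},f_j\rangle 1_{\{r\le s_j\}}\Big\}
=\mathbf{P}_{r,\sigma}\exp\Big\{\!-\sum_{j=1}^{n-1}\langle X_{s_j},\tilde f_j\rangle 1_{\{r\le s_j\}}\Big\},
\]
where $\tilde f_j=f_j$ for $j\le n-2$ and $\tilde f_{n-1}=f_{n-1}+u_{s_n-s_{n-1}}f_n\in B(0,\infty)^+$ (positivity is clear). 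Now apply the induction hypothesis for $n-1$ time points $s_1<\cdots<s_{n-1}$ with data $\{\tilde f_j\}$: the right side equals $\exp\{-\langle\sigma,\tilde v_r\rangle\}$, where $\tilde v_r$ solves \eqref{5.22} on $[0,s_{n-1}]$ with terminal horizon $s_{n-1}$, terminal/absorbing term $\sum_{j=1}^{n-1}\tilde f_j(x-s_j+r)1_{\{r\le s_j\}}$, and ``boundary value'' $\tilde v_{s_{n-1}}=\tilde f_{n-1}(\cdot)=f_{n-1}+u_{s_n-s_{n-1}}f_n$.

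\textbf{Step 3 (gluing).} It remains to check that the function $v_r$ defined on $[0,s_n]$ by $v_r=u_{s_n-r}f_n$ for $r\in(s_{n-1},s_n]$ and $v_r=\tilde v_r$ for $r\in[0,s_{n-1}]$ is precisely the solution of \eqref{5.22} on the whole interval $[0,s_n]$. On $(s_{n-1},s_n]$ this is Step 1. On $[0,s_{n-1}]$ one rewrites the integral in \eqref{5.22} as $\int_r^{s_{n-1}}+\int_{s_{n-1}}^{s_n}$; the second piece, evaluated with $v_s=u_{s_n-s}f_n$ from Step 1, is exactly $u_{s_n-s_{n-1}}f_n(x-s_{n-1}+r)-f_n(x-s_n+r)1_{\{r\le s_n\}}$ by \eqref{5.20} shifted spatially by $r-s_{n-1}$, so plugging in converts \eqref{5.22} over $[r,s_n]$ into the equation \eqref{5.22} over $[r,s_{n-1}]$ with terminal term $\sum_{j=1}^{n-1}\tilde f_j(x-s_j+r)1_{\{r\le s_j\}}$ — i.e. exactly the equation $\tilde v$ satisfies. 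Uniqueness of the bounded positive solution of \eqref{5.22} follows, as for \eqref{2.4}/\eqref{2.7'}, from Gronwall's inequality applied to the difference of two solutions (the nonlinearity $z\mapsto g(x,\langle G,e^{-z}\rangle)$ is Lipschitz in $z$ uniformly in $x$ by \eqref{2.1}), which also legitimises the gluing. Finally the extension from $f_j\in C_0^1(0,\infty)^+$ to general $f_j\in B(0,\infty)^+$ is routine: one first treats step functions and then uses monotone convergence together with \eqref{5.21}, exactly as in the last paragraph of the proof of Proposition \ref{prop2.1}.

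\textbf{Main obstacle.} The only genuinely nontrivial point is Step 3, the verification that the two locally defined functions splice into a single solution of \eqref{5.22}; this is a bookkeeping computation with the spatial shifts in the argument $x-s+r$, and care is needed to confirm that the shift inside $u_{s_n-s}f_n$ matches the shift prescribed by \eqref{5.22}. Everything else is a direct combination of the Markov property and the already-established single-time identity \eqref{5.19}–\eqref{5.20}, so I expect the write-up to be short once the indexing is pinned down.
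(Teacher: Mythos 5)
Your proof is correct and follows essentially the same route as the paper: induction on $n$, the Markov property to reduce to fewer time points, splicing the locally defined solutions of (\ref{5.22}) across the conditioning time, and Gronwall's inequality for uniqueness. The only (immaterial) difference is the direction of the peeling — you condition at $s_{n-1}$ and absorb $u_{s_n-s_{n-1}}f_n$ into $f_{n-1}$, applying the single-time identity (\ref{5.19}) to the innermost expectation and the induction hypothesis to the outer one, whereas the paper conditions at $s_1$, applies the induction hypothesis to the inner expectation over $\{s_2,\dots,s_n\}$ and the base case on $[r,s_1]$; also note that your final approximation step from $C_0^1$ to $B(0,\infty)^+$ is unnecessary, since (\ref{5.19})--(\ref{5.20}) already hold for all $f\in B(0,\infty)^{+}$.
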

	
	\begin{proof} We shall give the proof by induction in $n \ge 1$. For $n=1$ the result follows from (\ref{5.19}) and (\ref{5.20}). Now supposing (\ref{5.21}) and (\ref{5.22}) are satisfied when $n$ is replaced by $n-1$, we prove they are also true for $n$. It is clearly sufficient to consider the case with $0 \le r \le s_1<\cdots<s_n$. By the Markov property of $X$ we have
		\begin{align*}
			\mathbf{P}_{r, \sigma} \exp \Big\{\!-\sum_{j=1}^n \langle X_{s_j},f_j\rangle\Big\}
			&=\mathbf{P}_{r,\sigma}\Big[\mathbf{P}_{r,\sigma}\Big[\exp\Big\{\!-\langle X_{s_1},f_1\rangle-\sum_{j=2}^{n}\langle X_{s_j},f_j\rangle\Big\}\Big|\mathscr{F}_{r,s_1}\Big]\Big]\\
			&=\mathbf{P}_{r,\sigma}\Big[\exp\{-\langle X_{s_1},f_1\rangle\}\mathbf{P}_{r,\sigma}\Big[\exp\Big\{\!-\sum_{j=2}^{n}\langle X_{s_j},f_j\rangle\Big\}\Big|\mathscr{F}_{r,s_1}\Big]\Big]\\
			&=\mathbf{P}_{r,\sigma}\Big[\exp\{-\langle X_{s_1},f_1\rangle\}\mathbf{P}_{s_1,X_{s_1}}\Big[\exp\Big\{\!-\sum_{j=2}^{n}\langle X_{s_j},f_j\rangle\Big\}\Big]\Big]\\
			&=\mathbf{P}_{r,\sigma} \exp\{-\langle X_{s_1},f_1+\tilde{v}_{s_1}\rangle\},
		\end{align*}
		where $(r, x) \mapsto \tilde{v}_r(x)$ is a bounded positive Borel function on $[0, s_n] \times (0,\infty)$ satisfying
		\begin{align}\label{5.23}
			\tilde{v}_{r}(x) = \sum\limits_{j=2}^n f_j(x-s_j+r) 1_{\{r \le s_j\}} +\int_{s_1}^{s_{n}} \alpha(x-s+r) [1-g(x-s+r,\langle G, e^{-\tilde{v}_s}\rangle)] ds.
		\end{align}
		Then the result for $n=1$ implies that
		\begin{align*}
			\mathbf{P}_{r, \sigma} \exp \Big\{\!-\sum_{j=1}^n \langle X_{s_j},f_j\rangle\Big\}=\exp \{-\langle\sigma, v_{r}\rangle\},
		\end{align*}
		with $(r, x) \mapsto v_r(x)$ being a bounded positive Borel function on $[0, s_1] \times (0,\infty)$ satisfying
		\begin{small}
			\begin{align}\label{5.24}
				v_{r}(x)\! =\! [f_1\!(x-s_1+r)+\tilde{v}_{s_1}\!(x-s_1+r)] 1_{\{r \le s_1\}}\!+\!\int_{r}^{s_{1}} \!\!\!\alpha(x-s+r) [1-g(x-s+r,\langle G, e^{-v_s}\rangle)] ds.
			\end{align}
		\end{small}Setting $v_r=\tilde{v}_r$ for $s_1<r \le s_n$, from (\ref{5.23}) and (\ref{5.24}) one checks that $(r, x) \mapsto$ $v_r(x)$ is a bounded positive solution on $[0, s_n] \times (0,\infty)$ of (\ref{5.22}). The uniqueness of the solution of (\ref{5.22}) is a standard application of Gronwall's inequality. $\hfill\square$ \end{proof}

	\begin{proposition}\label{prop5.15} Suppose that $t\ge 0$ and $\lambda(d s)$ is a finite Borel measure on $[0, t]$. Let $(s, x) \mapsto f_s(x)$ be a bounded positive Borel function on $[0, t] \times (0,\infty)$. Then we have
		\begin{align}\label{5.25}
			\mathbf{P}_{r, \sigma} \exp \Big\{\!-\int_{[r, t]} \langle X_{s},f_s\rangle \lambda(d s)\Big\}=\exp \{-\langle\sigma, v_{r}\rangle\},\quad 0\le r\le t,
		\end{align}
		where $(r, x) \mapsto v_r(x)$ is the unique bounded positive solution on $[0, t] \times (0,\infty)$ of
		\begin{align}\label{5.26}
			v_{r}(x)= \int_{[r,t]} f_s(x-s+r) \lambda(ds)+\int_{[r,t]} \alpha(x-s+r) [1-g(x-s+r,\langle G, e^{-v_s}\rangle)] ds.
		\end{align}
	\end{proposition}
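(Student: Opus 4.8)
The plan is to reduce the identity to Proposition \ref{prop5.14} by a Riemann--sum approximation of the time integral and then to pass to the limit on both sides, using throughout that $\beta=\|\alpha g'(\cdot,1-)\|<\infty$ together with the elementary bounds $0\le 1-g(y,z)\le g'(y,1-)(1-z)$ and $|g(y,z_1)-g(y,z_2)|\le g'(y,1-)|z_1-z_2|$ for $z,z_1,z_2\in[0,1]$. First I would settle the existence and uniqueness claimed for (\ref{5.26}). Uniqueness is the usual Gronwall argument: two bounded positive solutions $v,\tilde v$ obey $\|v_r-\tilde v_r\|\le\beta\int_r^t\|v_s-\tilde v_s\|\,ds$ (using $|\langle G,e^{-\tilde v_s}\rangle-\langle G,e^{-v_s}\rangle|\le\|v_s-\tilde v_s\|$), hence $v=\tilde v$. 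Existence follows from the monotone Picard iteration $v^{(0)}\equiv 0$, $v^{(k+1)}=\Phi(v^{(k)})$ with $\Phi$ the right--hand side of (\ref{5.26}): since $v\mapsto 1-g(\cdot,\langle G,e^{-v}\rangle)$ is nondecreasing, $\{v^{(k)}\}$ is nondecreasing and, by the same Gronwall estimate, bounded by $\|f\|\lambda([0,t])e^{\beta t}$, so its limit solves (\ref{5.26}). The same iteration yields the \emph{comparison principle}: $f\le\tilde f$ pointwise implies $v_r\le\tilde v_r$ pointwise.

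Next, when $\lambda$ is supported on finitely many points $s_1<\dots<s_n\le t$ we write $\lambda(ds)=\sum_j c_j\delta_{s_j}(ds)$, so the left side of (\ref{5.25}) becomes $\mathbf{P}_{r,\sigma}\exp\{-\sum_j\langle X_{s_j},c_jf_{s_j}\rangle 1_{\{r\le s_j\}}\}$ and (\ref{5.26}) becomes (\ref{5.22}) with $f_j:=c_jf_{s_j}\in B(0,\infty)^{+}$, except that the branching integral in (\ref{5.26}) runs to $t$ rather than to $s_n$; but for $r\in(s_n,t]$ the forcing in (\ref{5.26}) vanishes, so $v_r\equiv 0$ by Gronwall, whence $\langle G,e^{-v_s}\rangle\equiv1$ and $1-g(\cdot,1)\equiv0$ on $(s_n,t]$, and (\ref{5.26}) restricted to $[0,s_n]$ reduces exactly to (\ref{5.22}). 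Thus Proposition \ref{prop5.14} already gives (\ref{5.25})--(\ref{5.26}) in this case. For a general finite $\lambda$ and, as a first step, $f$ with $s\mapsto f_s$ right-continuous, fix a refining sequence of grids $r<r^{(m)}_1<\dots<r^{(m)}_{k_m}=t$ of mesh tending to $0$ (the possible atom of $\lambda$ at $r$ is harmless since $X_r=\sigma$ under $\mathbf{P}_{r,\sigma}$, so that factor is explicit on both sides) and put $\bar f^{(m)}_i:=\int_{(r^{(m)}_{i-1},r^{(m)}_i]}f_s(\cdot)\,\lambda(ds)$. Applying the finitely-supported case to $\sum_i\delta_{r^{(m)}_i}$ and $\{\bar f^{(m)}_i\}$ gives $\mathbf{P}_{r,\sigma}\exp\{-\sum_i\langle X_{r^{(m)}_i},\bar f^{(m)}_i\rangle\}=\exp\{-\langle\sigma,v^{(m)}_r\rangle\}$, with $\{v^{(m)}\}$ uniformly bounded (Gronwall with $\|f\|\lambda([0,t])$). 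On the probabilistic side $\sum_i\langle X_{r^{(m)}_i},\bar f^{(m)}_i\rangle=\int_{(r,t]}\langle X_{\psi_m(s)},f_s\rangle\,\lambda(ds)$, where $\psi_m(s)$ is the right end-point of the grid cell containing $s$, so $\psi_m(s)\downarrow s$; by right-continuity of the c\`{a}dl\`{a}g process $X$ one has $X_{\psi_m(s)}\to X_s$ for every $s$, and the integrand is dominated by $\|f\|\sup_{0\le s\le t}X_s(\infty)$, which is finite a.s.\ by Proposition \ref{prop3.4}; hence by dominated convergence the left side tends to $\mathbf{P}_{r,\sigma}\exp\{-\int_{[r,t]}\langle X_s,f_s\rangle\,\lambda(ds)\}$. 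On the equation side the forcing term of the $m$-th equation is $\int_{[\rho,t]}f_s(x-\psi_m(s)+\rho)\,\lambda(ds)$, which converges to the forcing $\int_{[\rho,t]}f_s(x-s+\rho)\,\lambda(ds)$ of (\ref{5.26}) by right-continuity of $f$ and dominated convergence; a Gronwall comparison of $v^{(m)}$ with the solution $v$ of (\ref{5.26}), controlled by the sup-norm of the difference of the forcing terms, then gives $v^{(m)}_\rho\to v_\rho$ uniformly, so the right side tends to $\exp\{-\langle\sigma,v_r\rangle\}$. This proves (\ref{5.25})--(\ref{5.26}) for right-continuous $f$.

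Finally I would extend to an arbitrary bounded positive Borel $f$ by a monotone class argument. The family of $f$ for which (\ref{5.25})--(\ref{5.26}) holds contains all (right-)continuous functions and is closed under uniformly bounded increasing and decreasing pointwise limits: if $f^{(k)}\uparrow f$, then on the probabilistic side $\int_{[r,t]}\langle X_s,f^{(k)}_s\rangle\lambda(ds)\uparrow\int_{[r,t]}\langle X_s,f_s\rangle\lambda(ds)$ a.s., so the left sides converge by dominated convergence; by the comparison principle the solutions $v^{(k)}$ increase and remain bounded by the Gronwall bound with $\|f\|$, and their limit solves (\ref{5.26}) for $f$ by monotone convergence in the integral equation (again using monotonicity of $v\mapsto 1-g(\cdot,\langle G,e^{-v}\rangle)$), hence equals $v$ by uniqueness, so the right sides converge to $\exp\{-\langle\sigma,v_r\rangle\}$; the decreasing case is symmetric. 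Since the bounded continuous functions generate the Borel $\sigma$-algebra of $[0,t]\times(0,\infty)$, this family is all of $B([0,t]\times(0,\infty))^{+}$, which finishes the proof.

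I expect the crux to be the coupled limit passage in the approximation step: the discretization must be arranged so that it converges on the probabilistic side using \emph{only} the right-continuity of $X$ (the left limits of $X$ genuinely differ at jump times), while on the integral-equation side one needs the discretized forcing terms to converge in sup-norm, which is why a little regularity of $f$ in the time variable is used first and why the general Borel case is routed through the monotone class step together with the nonlinear comparison principle for (\ref{5.26}).
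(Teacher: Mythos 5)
Your proof follows the same overall route as the paper's: discretize the time integral, invoke Proposition~\ref{prop5.14} for the resulting finite sum, pass to the limit in both the Laplace functional and the integral equation, prove uniqueness for (\ref{5.26}) by Gronwall, and extend from regular $f$ to all of $B([0,t]\times(0,\infty))^{+}$ by a monotone-class argument. The genuine refinements on your side are (a) the explicit existence proof for (\ref{5.26}) by monotone Picard iteration together with the comparison principle (the paper instead obtains existence as the limit of the discretized solutions), and (b) the choice of right end-points $\psi_m(s)\downarrow s$ with cell-averaged weights $\bar f^{(m)}_i$, which makes the probabilistic limit a clean consequence of the right-continuity of $X$ and Proposition~\ref{prop3.4} (the paper evaluates at left end-points of the dyadic cells and leans on uniform continuity of $f$). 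Two details need repair. First, the convergence of the discretized forcing $\int f_s(x-\psi_m(s)+\rho)\,\lambda(ds)\to\int f_s(x-s+\rho)\,\lambda(ds)$ is governed by the \emph{spatial} argument $x-\psi_m(s)+\rho\uparrow x-s+\rho$, not by the time variable, so right-continuity of $s\mapsto f_s$ is not the relevant hypothesis; you should seed the argument with jointly continuous (or, as the paper does, uniformly continuous) $f$, for which the forcing converges pointwise and boundedly but in general \emph{not} in sup-norm on the noncompact set $(0,\infty)$. The Gronwall comparison must therefore be run in two steps --- first for $\rho\mapsto\langle G,|v^{(m)}_\rho-v_\rho|\rangle$, then fed back into the equation --- to obtain pointwise convergence $v^{(m)}_\rho(x)\to v_\rho(x)$, which is all that $\langle\sigma,v^{(m)}_r\rangle\to\langle\sigma,v_r\rangle$ requires. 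Second, closure of your class under monotone limits alone does not obviously exhaust $B([0,t]\times(0,\infty))^{+}$ starting from the continuous functions (the class is defined by a nonlinear identity, so it is not a vector space and the usual Dynkin-system argument is unavailable); the paper instead verifies closure under arbitrary bounded pointwise convergence and appeals to a functional monotone-class theorem from \cite{Li22}. Your Gronwall stability estimate yields exactly this stronger closure (the forcing terms converge pointwise and boundedly whenever $f^{(k)}\to f$ boundedly pointwise), so the fix costs nothing.
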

	
	\begin{proof} \textit{Step 1}. We first assume $(s, x) \mapsto f_s(x)$ is uniformly continuous on $[0, t] \times (0,\infty)$. For any integer $n \ge 1$ let $\epsilon_n(k)=k / 2^n$. From Proposition \ref{prop5.14} we see that
		\begin{small}
			\begin{align}\label{5.27}
				P_{r, \sigma} \exp \Big\{\!-\sum_{k=0}^{2^n} \langle X_{\epsilon_n(k) t},f_{\epsilon_n(k) t}\rangle \lambda\big((\epsilon_n(k) t, \epsilon_n(k+1) t] \cap [0, t]\big) 1_{\{r\le \epsilon_n(k) t\}}\Big\}=\exp\{-\langle \sigma,v_{n}(r)\rangle\},
			\end{align}
		\end{small}where $(r, x) \mapsto v_n(r, x)$ is a bounded positive solution on $[0, t] \times (0,\infty)$ to
		\begin{align}\label{5.28}
			v_{n}(r,x)=
			& \sum\limits_{k=0}^{2^n} f_{\epsilon_n(k) t}(x-\epsilon_n(k) t+r) \lambda\big((\epsilon_n(k) t, \epsilon_n(k+1) t] \cap [0, t]\big) 1_{\{r\le \epsilon_n(k) t\}}\nonumber\\
			&+\int_{r}^{\epsilon_n(k) t} \alpha(x-s+r) [1-g(x-s+r,\langle G, e^{-v_{n}(s,\cdot)}\rangle)] ds.
		\end{align}
		By letting $n\rightarrow\infty$ in (\ref{5.27}) and using the uniform continuity of $(s, x) \mapsto f_s(x)$ we have the limit $v_r(x)=\lim _{n \rightarrow \infty} v_n(r, x)$ exists and (\ref{5.25}) holds. It is not hard to show that $\{v_n\}$ is uniformly bounded on $[0, t] \times (0,\infty)$. Then we get (\ref{5.26}) by letting $n \rightarrow \infty$ in (\ref{5.28}).
		
		\textit{Step 2}. We use $B([0, t] \times (0,\infty))^{+}$ to denote the set of bounded positive Borel functions $(s,x)\mapsto f_{s}(x)$ on $[0, t] \times (0,\infty)$. Let $B_1 \subset B([0, t] \times (0,\infty))^{+}$ be the set of functions $(s, x) \mapsto f_s(x)$ for which there exist bounded positive solutions $(r, x) \mapsto v_r(x)$ of (\ref{5.26}) such that (\ref{5.25}) holds. It is easy to show that $B_1$ is closed under bounded pointwise convergence. The result of the first step shows that $B_1$ contains all uniformly continuous functions in $B([0, t] \times (0,\infty))^{+}$, so we have $B_1=B([0, t] \times (0,\infty))^{+}$ by Li \cite[Proposition 1.3]{Li22}.
		
		\textit{Step 3}. To show the uniqueness of the solution of (\ref{5.26}), suppose that $(r, x) \mapsto$ $\tilde{v}_r(x)$ is another bounded positive Borel function on $[0, t] \times (0,\infty)$ satisfying this equation. It is easy to find a constant $K_1,K_2 \ge 0$ such that
		\begin{align*}
			\|\tilde{v}_r - v_r\| \le K_1 \int_{r}^{t}\|e^{-v_s}-e^{-\tilde{v}_s}\| ds \le K_2 \int_{r}^{t}\|\tilde{v}_s - v_s\| ds.
		\end{align*}
		We may rewrite the above inequality into
		\begin{align*}
			\|\tilde{v}_{t-r}-v_{t-r}\| \le K_2 \int_{0}^{r}\|\tilde{v}_{t-s} - v_{t-s}\| ds,\quad 0 \le r \le t.
		\end{align*}
		so Gronwall's inequality implies $\|\tilde{v}_{t-r}-v_{t-r}\|=0$ for every $0 \le r \le t$. $\hfill\square$ \end{proof}
	
	\begin{proposition}\label{prop5.16} Let $t \ge 0$ be given. Let $h \in B(0,\infty)^{+}$and let $(s, x) \mapsto f_s(x)$ be a bounded positive Borel function on $[0, t] \times (0,\infty)$. Then for $0 \le r \le t$ we have
		\begin{align}\label{5.29}
			\mathbf{P}_{r, \sigma} \exp \Big\{\!-\langle X_{t},h\rangle-\int_{r}^{t} \langle X_s,f_s\rangle ds\Big\}=\exp \{-\langle\sigma, v_{r}\rangle\},
		\end{align}
		where $(r, x) \mapsto v_r(x)$ is the unique bounded positive solution on $[0, t] \times (0,\infty)$ of
		\begin{small}
			\begin{align}\label{5.30}
				v_{r}(x)= h(x-t+r)+\int_{r}^{t} f_s(x-s+r) ds+\int_{r}^{t} \alpha(x-s+r) [1-g(x-s+r,\langle G, e^{-v_s}\rangle)] ds.
			\end{align}
		\end{small}
	\end{proposition}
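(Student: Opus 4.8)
The plan is to obtain Proposition \ref{prop5.16} as a direct specialization of Proposition \ref{prop5.15}, choosing the finite Borel measure $\lambda(ds)$ on $[0,t]$ to carry both an absolutely continuous part and an atom at the terminal time. Concretely, I would set $\lambda(ds)=1_{[0,t]}(s)\,ds+\delta_{t}(ds)$, which is finite since $t<\infty$, and define the bounded positive Borel function $(s,x)\mapsto\hat f_{s}(x)$ on $[0,t]\times(0,\infty)$ by $\hat f_{s}=f_{s}$ for $s\in[0,t)$ and $\hat f_{t}=h$. The value of $f$ at the single point $s=t$ is irrelevant to the Lebesgue integral $\int_{r}^{t}\langle X_{s},f_{s}\rangle\,ds$, so this redefinition only affects the atom.

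With these choices the occupation functional appearing in \eqref{5.25} becomes
\[
\int_{[r,t]}\langle X_{s},\hat f_{s}\rangle\,\lambda(ds)=\int_{r}^{t}\langle X_{s},f_{s}\rangle\,ds+\langle X_{t},h\rangle ,
\]
since the absolutely continuous part ignores the point $t$ while the atom contributes $\langle X_{t},\hat f_{t}\rangle=\langle X_{t},h\rangle$; hence the left-hand side of \eqref{5.25} coincides with that of \eqref{5.29}. On the other side, equation \eqref{5.26} for this $\lambda$ and $\hat f$ reads
\[
v_{r}(x)=\int_{r}^{t}\hat f_{s}(x-s+r)\,ds+\hat f_{t}(x-t+r)+\int_{r}^{t}\alpha(x-s+r)\big[1-g(x-s+r,\langle G,e^{-v_{s}}\rangle)\big]\,ds,
\]
because the non-local branching term in \eqref{5.26} is always integrated against Lebesgue measure and not against $\lambda$. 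Substituting $\hat f_{s}=f_{s}$ for $s<t$ and $\hat f_{t}=h$ turns this into \eqref{5.30}. The uniqueness of the bounded positive solution of \eqref{5.30} is inherited from the uniqueness assertion in Proposition \ref{prop5.15}, or can be reproved verbatim by the Gronwall argument in Step 3 of its proof.

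The only point requiring care is the bookkeeping around the atom: one must verify that the single-point modification of $f$ at $s=t$ does not alter the absolutely continuous integral, that the non-local term in \eqref{5.26} remains against $ds$ rather than $\lambda$, and that the interval conventions $\int_{a}^{b}=\int_{(a,b]}$ used throughout are consistent with identifying $\int_{[r,t]}\langle X_{s},f_{s}\rangle\,ds$ with $\int_{r}^{t}\langle X_{s},f_{s}\rangle\,ds$. None of these is a substantive difficulty, so I do not anticipate a real obstacle; the proposition is essentially Proposition \ref{prop5.15} applied to a measure with a terminal atom.
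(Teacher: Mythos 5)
Your proposal is correct and is exactly the paper's own argument: the paper also derives Proposition \ref{prop5.16} by applying Proposition \ref{prop5.15} to $\lambda(ds)=ds+\delta_t(ds)$ and the function $1_{\{s<t\}}f_s(x)+1_{\{s=t\}}h(x)$. Your additional bookkeeping remarks about the atom and the Lebesgue-measure form of the nonlocal term are accurate but not needed beyond what the paper states.
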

	
	\begin{proof} This follows by an application of Proposition \ref{prop5.15} to the measure $\lambda(ds)=ds+\delta_t(ds)$ and the function $f_s(x)=1_{\{s<t\}} f_s(x)+1_{\{s=t\}} h(x)$. $\hfill\square$ \end{proof}
	
	\begin{corollary}\label{cor5.17} Let $X=(\Omega, \mathscr{F}, \mathscr{F}_{t}, X_t, \mathbf{P}_{\sigma})$ be a c\`{a}dl\`{a}g realization of the $(\alpha,g,G)$-process started from time zero. Then for $t\ge 0$ and $f,h\in B(0,\infty)^{+}$ we have
		\begin{align}\label{5.31}
			\mathbf{P}_{\sigma} \exp \Big\{\!-\langle X_t,h\rangle-\int_0^t \langle X_s,f\rangle ds\Big\}=\exp \{-\langle \sigma,v_{t}\rangle \},
		\end{align}
		where $(t, x) \mapsto v_t(x)$ is the unique locally bounded positive solution of
		\begin{align}\label{5.32}
			v_{t}(x)= h(x-t)+\int_{0}^{t} f(x-s) ds+\int_{0}^{t} \alpha(x-s) [1-g(x-s,\langle G, e^{-v_{t-s}}\rangle)] ds.
		\end{align}
	\end{corollary}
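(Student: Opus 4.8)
The plan is to obtain this statement as a direct specialization of Proposition \ref{prop5.16}, combined with a time-reversal change of variables that turns the backward integral equation (\ref{5.30}) into the forward form (\ref{5.32}).

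First I would apply Proposition \ref{prop5.16} with initial time $r=0$, an arbitrary but fixed terminal time $t\ge 0$, and the time-homogeneous choice $f_s(x)\equiv f(x)$, which is admissible since $f\in B(0,\infty)^{+}$. Writing $\mathbf{P}_{\sigma}=\mathbf{P}_{0,\sigma}$, the identity (\ref{5.29}) then reads $\mathbf{P}_{\sigma}\exp\{-\langle X_t,h\rangle-\int_0^t\langle X_s,f\rangle\,ds\}=\exp\{-\langle\sigma,v_0\rangle\}$, where $(r,x)\mapsto v_r(x)$ is the unique bounded positive solution on $[0,t]\times(0,\infty)$ of
\begin{align*}
	v_{r}(x)= h(x-t+r)+\int_{r}^{t} f(x-s+r)\,ds+\int_{r}^{t} \alpha(x-s+r)\,[1-g(x-s+r,\langle G,e^{-v_s}\rangle)]\,ds.
\end{align*}
It therefore suffices to identify $v_0$ with the value at time $t$ of a solution of (\ref{5.32}).

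Next I would substitute $u=s-r$ in each of the two integrals above, so that $x-s+r=x-u$, and then set $V_{\tau}(x):=v_{t-\tau}(x)$ for $\tau\in[0,t]$. Since $s=u+r$ gives $v_s=v_{u+r}=v_{t-(\tau-u)}=V_{\tau-u}$ when $r=t-\tau$, the displayed equation becomes
\begin{align*}
	V_{\tau}(x)= h(x-\tau)+\int_{0}^{\tau} f(x-u)\,du+\int_{0}^{\tau} \alpha(x-u)\,[1-g(x-u,\langle G,e^{-V_{\tau-u}}\rangle)]\,du,
\end{align*}
which is exactly (\ref{5.32}) with $t$ replaced by $\tau$ and $v$ replaced by $V$. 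Taking $\tau=t$ yields $V_t=v_0$, so $(t,x)\mapsto v_t(x):=V_t(x)$ solves (\ref{5.32}) and $\exp\{-\langle\sigma,v_0\rangle\}=\exp\{-\langle\sigma,v_t\rangle\}$, which is (\ref{5.31}). Local boundedness of $(t,x)\mapsto v_t(x)$ is then immediate from (\ref{5.32}) together with the bound $0\le 1-g(x,z)\le 1$ for $z\in[0,1]$, giving $\|v_t\|\le\|h\|+t\|f\|+t\|\alpha\|$.

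The only point needing a little care is uniqueness of the locally bounded positive solution of (\ref{5.32}), which I would treat exactly as in Proposition \ref{prop5.15}: given two such solutions, the elementary estimates $|g(x,z_2)-g(x,z_1)|\le\|g'(\cdot,1-)\|\,|z_2-z_1|$ on $[0,1]$ and $|e^{-a}-e^{-b}|\le|a-b|$ reduce the difference $\|v_t^{(1)}-v_t^{(2)}\|$ to an integral inequality in $t$ on any compact interval, and Gronwall's inequality forces it to vanish. I do not expect a genuine obstacle here; the whole argument is bookkeeping around the time shift, and the one thing to get right is that the reversal $V_{\tau}=v_{t-\tau}$ sends the term $v_s$ inside the integrand to $V_{\tau-u}$.
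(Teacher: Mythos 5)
Your proposal is correct and follows exactly the route the paper intends: the corollary is the specialization of Proposition \ref{prop5.16} to $r=0$ and time-homogeneous $f_s\equiv f$, with the time reversal $V_\tau=v_{t-\tau}$ converting the backward equation (\ref{5.30}) into the forward form (\ref{5.32}). The change of variables, the local bound $\|v_t\|\le\|h\|+t\|f\|+t\|\alpha\|$, and the Gronwall uniqueness argument are all carried out correctly.
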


	\begin{corollary}\label{cor5.18} Let $X=(\Omega, \mathscr{F}, \mathscr{F}_{t}, X_t, \mathbf{P}_{\sigma})$ be a c\`{a}dl\`{a}g realization of the $(\alpha,g,G)$-process started from time zero. Then for $t\ge 0$ and $f\in B(0,\infty)^{+}$ we have
		\begin{align}\label{5.31'}
			\mathbf{P}_{\sigma} \exp \Big\{\!-\int_0^t \langle X_s,f\rangle ds\Big\}=\exp \{-\langle \sigma,v_{t}f\rangle \},
		\end{align}
		where $v_t f(x)$ is the unique locally bounded positive solution of
		\begin{align}\label{5.32'}
			v_{t}f(x)= \int_{0}^{t} f(x-s) ds+\int_{0}^{t} \alpha(x-s) [1-g(x-s,\langle G, e^{-v_{t-s}f}\rangle)] ds.
		\end{align}
	\end{corollary}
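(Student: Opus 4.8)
The plan is to read Corollary \ref{cor5.18} off from Corollary \ref{cor5.17} as the degenerate case $h\equiv 0$. First I would take $h(x)=0$ for all $x>0$ in (\ref{5.31}); the left-hand side then collapses to $\mathbf{P}_{\sigma}\exp\{-\int_0^t\langle X_s,f\rangle ds\}$, which is precisely the left-hand side of (\ref{5.31'}). On the right-hand side, setting $h\equiv 0$ in (\ref{5.32}) kills the term $h(x-t)$, so the locally bounded positive solution $(t,x)\mapsto v_t(x)$ of (\ref{5.32}) now satisfies exactly (\ref{5.32'}); renaming this solution $v_tf$ yields (\ref{5.31'}). No new computation is needed, since Corollary \ref{cor5.17} is already established for every $h\in B(0,\infty)^{+}$, and the constant function $0$ belongs to $B(0,\infty)^{+}$.

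The only thing still to be checked is that (\ref{5.32'}) has a unique locally bounded positive solution, so that $v_tf$ is well defined. This follows by the Gronwall argument already used in Step 3 of the proof of Proposition \ref{prop5.15}: if $\tilde v_tf$ is another locally bounded positive solution of (\ref{5.32'}), then on any bounded time interval one gets $\|\tilde v_tf-v_tf\|\le K\int_0^t\|\tilde v_sf-v_sf\|\,ds$ with a constant $K$ depending only on $\|\alpha\|$ and $\|g'(\cdot,1-)\|$ (using that $|g(x,a)-g(x,b)|\le\|g'(\cdot,1-)\|\,|a-b|$ and $|e^{-u}-e^{-v}|\le|u-v|$), and hence $\tilde v_tf=v_tf$.

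Alternatively, one could bypass Corollary \ref{cor5.17} and invoke Proposition \ref{prop5.15} directly with $r=0$, the finite Borel measure $\lambda(ds)=ds$ on $[0,t]$, and the time-independent integrand $f_s\equiv f$; after the usual change of variable $v_r(x)\leftrightarrow v_{t-r}f(x)$ relating (\ref{5.26}) to (\ref{5.32'}), this gives (\ref{5.31'})--(\ref{5.32'}) at once. Either way there is no genuine obstacle here: the corollary is a routine specialization of results proved earlier in the section, and the work has essentially all been done in Propositions \ref{prop5.14}--\ref{prop5.16}.
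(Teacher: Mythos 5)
Your proposal is correct and matches the paper's intent: Corollary \ref{cor5.18} is stated without proof precisely because it is the specialization $h\equiv 0$ of Corollary \ref{cor5.17} (equivalently, Proposition \ref{prop5.15} with $\lambda(ds)=ds$ and $f_s\equiv f$, after the time-reversal $v_r\leftrightarrow v_{t-r}f$), and the uniqueness follows by the same Gronwall argument as in Step 3 of the proof of Proposition \ref{prop5.15}. Nothing further is needed.
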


	By arguments similar to those in the proofs of Propositions \ref{prop2.2} and \ref{prop2.2'}, we can obtain the following:

	\begin{proposition}\label{prop5.19} For any $t \ge 0$ and $\sigma \in \mathfrak{N}(0, \infty)$ we have
		\begin{align}\label{5.19'}
			\mathbf{P}_{\sigma}\Big[\int_{0}^{t}\langle X_s,f\rangle ds\Big]=\langle\sigma, \Pi_{t} f\rangle, \quad f\in B(0,\infty),
		\end{align}
		where $(t,x)\mapsto \Pi_{t}f(x)$ is the unique locally bounded solution of
		\begin{align}\label{5.19''}
			\Pi_{t} f(x)=\int_{0}^{t} f(x-s) ds+\int_{0}^{t} \alpha(x-s) g'(x-s,1-) \langle G,\Pi_{t-s}f\rangle ds,
		\end{align}
		which defines a semigroup of bounded kernels $(\Pi_{t})_{t \ge 0}$ on $(0,\infty)$.
	\end{proposition}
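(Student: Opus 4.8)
The plan is to follow the differentiation argument used in the proof of Proposition~\ref{prop2.2}, now applied to the occupation-time Laplace functional supplied by Corollary~\ref{cor5.18}. First I would record the analytic facts about (\ref{5.19''}): exactly as for (\ref{2.9}), existence and uniqueness of the locally bounded solution $(t,x)\mapsto\Pi_t f(x)$ follow from the general result Li \cite[Lemma~2.17]{Li22}, and applying Gronwall's inequality to $\|\Pi_t f\|\le t\|f\|+\|\alpha g'(\cdot,1-)\|\int_0^t\|\Pi_{t-s}f\|\,ds$ shows that $t\mapsto\|\Pi_t f\|$ is locally bounded. One also checks by Fubini's theorem that $\Pi_t f=\int_0^t\pi_s f\,ds$, which in particular exhibits $\Pi_t$ as a bounded kernel on $(0,\infty)$.

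Next, fix $f\in B(0,\infty)^{+}$ and substitute $\theta f$ for $f$ in Corollary~\ref{cor5.18}:
\begin{align*}
\mathbf{P}_{\sigma}\exp\Big\{\!-\theta\!\int_0^t\langle X_s,f\rangle\,ds\Big\}=\exp\{-\langle\sigma,v_t(\theta f)\rangle\},\qquad\theta\ge 0.
\end{align*}
Putting $f=0$ in (\ref{5.32'}) and invoking uniqueness gives $v_t(0)\equiv 0$. By Proposition~\ref{prop2.2} and Tonelli's theorem, $\mathbf{P}_{\sigma}\big[\int_0^t\langle X_s,f\rangle\,ds\big]=\int_0^t\langle\sigma,\pi_s f\rangle\,ds<\infty$; hence, using $1-e^{-x}\le x$ to dominate the difference quotients, dominated convergence lets me differentiate the left-hand side above at $\theta=0$ and obtain $-\mathbf{P}_{\sigma}\big[\int_0^t\langle X_s,f\rangle\,ds\big]$, while differentiating the right-hand side and using $v_t(0)=0$ yields $-\big\langle\sigma,\frac{\partial}{\partial\theta}v_t(\theta f)\big|_{\theta=0}\big\rangle$. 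Thus it remains to show that $h_t(x):=\frac{\partial}{\partial\theta}v_t(\theta f)(x)\big|_{\theta=0}$ exists, is locally bounded in $t$, and solves (\ref{5.19''}).

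To this end I would differentiate (\ref{5.32'}) in $\theta$; by the chain rule applied to $g\big(x-s,\langle G,e^{-v_{t-s}(\theta f)}\rangle\big)$ one gets
\begin{align*}
\frac{\partial}{\partial\theta}v_t(\theta f)(x)=\int_0^t f(x-s)\,ds+\int_0^t\alpha(x-s)\,g'\big(x-s,\langle G,e^{-v_{t-s}(\theta f)}\rangle\big)\big\langle G,\tfrac{\partial}{\partial\theta}v_{t-s}(\theta f)\cdot e^{-v_{t-s}(\theta f)}\big\rangle\,ds.
\end{align*}
Letting $\theta\to 0$ and using $v_{t-s}(0)=0$ together with $\langle G,1\rangle=1$, so that $g'\big(x-s,\langle G,1\rangle\big)=g'(x-s,1-)$, turns this identity into exactly (\ref{5.19''}) for $(t,x)\mapsto h_t(x)$; by the uniqueness recorded above, $h_t=\Pi_t f$, which proves (\ref{5.19'}) for $f\in B(0,\infty)^{+}$. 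The extension to $f\in B(0,\infty)$ is immediate by linearity.

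The main obstacle is the rigorous justification that $\theta\mapsto v_t(\theta f)(x)$ is differentiable at $0$ with the claimed derivative, i.e.\ that one may interchange $\frac{\partial}{\partial\theta}$ with the $s$-integral in (\ref{5.32'}), with $\langle G,\cdot\rangle$, and with $\mathbf{P}_{\sigma}$. As in the proofs of Propositions~\ref{prop2.2} and \ref{prop2.2'}, this is carried out by first bounding $v_t(\theta f)$ and the candidate derivative uniformly on compact $(t,\theta)$-sets via Gronwall's inequality (which uses only $\|\alpha g'(\cdot,1-)\|<\infty$), and then showing that the difference quotients $\theta^{-1}\big[v_t(\theta f)-v_t(0)\big]$ converge to $h_t$ uniformly on compact $t$-sets; these uniform estimates legitimize each of the interchanges above.
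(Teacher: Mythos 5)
Your proposal is correct and follows essentially the same route as the paper, which proves Proposition \ref{prop5.19} by the remark that $\Pi_t f(x)=\frac{\partial}{\partial\theta}v_t(\theta f)(x)\big|_{\theta=0}$ together with the arguments of Propositions \ref{prop2.2} and \ref{prop2.2'} applied to the occupation-time Laplace functional of Corollary \ref{cor5.18}. Your additional remarks on uniqueness via Gronwall, the identity $\Pi_t f=\int_0^t\pi_s f\,ds$, and the justification of differentiating under the integral simply make explicit what the paper leaves implicit.
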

	
	\begin{proposition}\label{prop5.20} Suppose that $\|g''(\cdot,1-)\|<\infty$. Then for any $t \ge 0$ and $\sigma \in \mathfrak{N}(0,\infty)$ we have
		\begin{align}\label{5.20'}
			\mathbf{P}_{\sigma}\Big[\Big(\int_{0}^{t}\langle X_s,f\rangle ds\Big)^2\Big]=\langle\sigma, \Pi_{t} f\rangle^{2}+\langle\sigma, \Gamma_{t} f\rangle, \quad f \in B(0,\infty),
		\end{align}
		where $(\Pi_{t})_{t \ge 0}$ is defined by (\ref{5.19''}) and $(t,x)\mapsto \Gamma_{t}f(x)$ is the unique locally bounded solution of
		\begin{align}\label{5.20''}
			\Gamma_{t} f(x)
			=&\int_{0}^{t} \alpha(x-s) \big[g''(x-s,1-) \langle G,\Pi_{t-s}f\rangle^{2} + g'(x-s,1-) \langle G,(\Pi_{t-s}f)^{2}\rangle\big] ds \nonumber\\
			&+\int_{0}^{t} \alpha(x-s) g'(x-s,1-) \langle G,\Gamma_{t-s}f\rangle ds.
		\end{align}
	\end{proposition}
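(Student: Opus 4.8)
The plan is to follow the proofs of Propositions \ref{prop2.2} and \ref{prop2.2'}, differentiating the Laplace transform twice at the origin. First I would take $f\in B(0,\infty)^{+}$ and apply Corollary \ref{cor5.18} with $f$ replaced by $\theta f$, $\theta\ge 0$, to get
\begin{align*}
	\mathbf{P}_{\sigma} \exp \Big\{\!-\theta\int_0^t \langle X_s,f\rangle ds\Big\}=\exp \{-\langle \sigma,v_{t}(\theta f)\rangle \},
\end{align*}
where $(t,x)\mapsto v_t(\theta f)(x)$ is the unique locally bounded positive solution of (\ref{5.32'}) with $f$ replaced by $\theta f$. As in the proof of Proposition \ref{prop5.19}, $\Pi_t f(x)=\frac{\partial}{\partial\theta}v_t(\theta f)(x)\big|_{\theta=0}$ and it solves (\ref{5.19''}). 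The new object is
\begin{align*}
	\Gamma_t f(x)=-\frac{\partial^2}{\partial\theta^2}v_t(\theta f)(x)\Big|_{\theta=0},
\end{align*}
and the task is to identify the equation it satisfies. The hypothesis $\|g''(\cdot,1-)\|<\infty$ is what makes this second $\theta$-derivative meaningful and, through Proposition \ref{prop2.2'} together with $\big(\int_0^t\langle X_s,f\rangle ds\big)^2\le t\int_0^t\langle X_s,f\rangle^2 ds$, yields the a priori bound $\mathbf{P}_{\sigma}\big[\big(\int_0^t\langle X_s,f\rangle ds\big)^2\big]<\infty$, so that differentiation under $\mathbf{P}_{\sigma}$ is licit.

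Next I would differentiate (\ref{5.32'}) (with $\theta f$ in place of $f$) twice in $\theta$. With $w_s=\frac{\partial}{\partial\theta}v_s(\theta f)$, the chain rule and $\frac{\partial}{\partial\theta}\langle G,e^{-v_{t-s}(\theta f)}\rangle=-\langle G,w_{t-s}e^{-v_{t-s}(\theta f)}\rangle$ give
\begin{align*}
	w_t(x)=\int_0^t f(x-s)ds+\int_0^t \alpha(x-s)g'(x-s,\langle G,e^{-v_{t-s}(\theta f)}\rangle)\langle G,w_{t-s}e^{-v_{t-s}(\theta f)}\rangle ds,
\end{align*}
and one further differentiation, using $\frac{\partial}{\partial\theta}g'(x-s,\langle G,e^{-v_{t-s}(\theta f)}\rangle)=-g''(x-s,\langle G,e^{-v_{t-s}(\theta f)}\rangle)\langle G,w_{t-s}e^{-v_{t-s}(\theta f)}\rangle$, produces an expression for $\frac{\partial}{\partial\theta}w_t(x)$. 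Setting $\theta=0$, where $v_{t-s}(0)=0$, $\langle G,e^{-v_{t-s}(0)}\rangle=1$, $w_{t-s}|_{\theta=0}=\Pi_{t-s}f$ and $\frac{\partial}{\partial\theta}w_{t-s}|_{\theta=0}=-\Gamma_{t-s}f$, and multiplying through by $-1$, I expect to recover precisely (\ref{5.20''}); the first differentiation simultaneously re-derives (\ref{5.19''}), which serves as a consistency check. Local boundedness and uniqueness of the solution of (\ref{5.20''}) then follow from a general result (e.g., Li \cite[Lemma 2.17]{Li22}) and Gronwall's inequality, using that $s\mapsto\Pi_{t-s}f$ is locally bounded by Proposition \ref{prop5.19}.

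Finally, I would differentiate the Laplace identity twice in $\theta$. Since
\begin{align*}
	\frac{\partial^2}{\partial\theta^2}\exp\{-\langle\sigma,v_t(\theta f)\rangle\}=\Big[\big\langle\sigma,\tfrac{\partial}{\partial\theta}v_t(\theta f)\big\rangle^2-\big\langle\sigma,\tfrac{\partial^2}{\partial\theta^2}v_t(\theta f)\big\rangle\Big]\exp\{-\langle\sigma,v_t(\theta f)\rangle\},
\end{align*}
evaluation at $\theta=0$ gives $\mathbf{P}_{\sigma}\big[\big(\int_0^t\langle X_s,f\rangle ds\big)^2\big]=\langle\sigma,\Pi_t f\rangle^2+\langle\sigma,\Gamma_t f\rangle$, which is (\ref{5.20'}) for $f\in B(0,\infty)^{+}$. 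The extension to $f\in B(0,\infty)$ I would obtain by writing $f=f^{+}-f^{-}$, expanding the square, and recovering the cross term by polarization; all terms are finite by the moment bound, and $\Pi_t$ is linear in $f$ while $\Gamma_t$ defined by (\ref{5.20''}) is the matching quadratic form, so the identity persists.

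I expect the main obstacle to be the rigorous justification of the two differentiations — that $\theta\mapsto v_t(\theta f)(x)$ is twice continuously differentiable with the stated derivatives, and that differentiation commutes with $\mathbf{P}_{\sigma}$. For the latter I would dominate the difference quotients uniformly for $\theta\in[0,1]$ using the a priori second-moment bound and apply dominated convergence; for the former I would argue as in the proofs of Propositions \ref{prop2.2} and \ref{prop2.2'}, writing the difference-quotient analogue of the equation for $w_t$ and passing to the limit with the Gronwall uniqueness estimate, or else invoking analyticity in $\theta$ of the Laplace functional near $0$. The remaining computations are routine.
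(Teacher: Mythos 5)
Your proposal is correct and follows exactly the route the paper indicates: the paper omits the proof, remarking only that it proceeds as in Propositions \ref{prop2.2} and \ref{prop2.2'} with $\Pi_{t} f(x)=\frac{\partial}{\partial \theta} v_{t}(\theta f)(x)\big|_{\theta=0}$ and $\Gamma_{t}f(x)=-\frac{\partial^2}{\partial \theta^2} v_{t}(\theta f)(x)\big|_{\theta=0}$, which is precisely your construction, and your sign bookkeeping when differentiating (\ref{5.32'}) twice does recover (\ref{5.19''}) and (\ref{5.20''}). The only difference is that you spell out the justification of differentiation and the extension to signed $f$ in more detail than the paper does.
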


	In fact, as in the proofs of Propositions \ref{prop2.2} and \ref{prop2.2'}, we have, for $f\in B(0,\infty)^+$,
	\begin{align*}
		\Pi_{t} f(x)=\frac{\partial}{\partial \theta} v_{t}(\theta f)(x)\Big|_{\theta=0},
		\quad
		\Gamma_{t}f(x)=-\frac{\partial^2}{\partial \theta^2} v_{t}(\theta f)(x)\Big|_{\theta=0},
	\end{align*}
	where $\theta>0$ and $v_t f(x)$ is defined by (\ref{5.32'}).

	\begin{proposition}\label{prop5.21} For any $t\ge 0$ and $f\in B(0,\infty)$ we have
		\begin{align}\label{5.21'}
			\Pi_t f(x)=\int_{0}^{t} \pi_s f(x) ds.
		\end{align}
		where $\pi_{t} f(x)$ is the unique locally bounded solution to (\ref{2.9}).
	\end{proposition}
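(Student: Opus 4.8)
The plan is to show that the locally bounded function $(t,x)\mapsto\int_0^t\pi_sf(x)\,ds$ solves the integral equation (\ref{5.19''}) and then to invoke uniqueness of its locally bounded solution. Write $\tilde{\Pi}_tf(x):=\int_0^t\pi_sf(x)\,ds$.

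First I would record that $\tilde{\Pi}_tf$ is well defined and locally bounded in $t$: since $\pi_t$ is a positive kernel with $\|\pi_tf\|\le e^{\beta t}\|f\|$ (see the proof of Proposition \ref{prop3.11}), one has $|\tilde{\Pi}_tf(x)|\le\int_0^te^{\beta s}\|f\|\,ds=\beta^{-1}(e^{\beta t}-1)\|f\|$, and joint measurability in $(t,x)$ is clear. Next I would insert the defining equation (\ref{2.9}) for $\pi_sf(x)$ into $\tilde{\Pi}_tf(x)$ to obtain
\begin{align*}
\tilde{\Pi}_tf(x)=\int_0^tf(x-s)\,ds+\int_0^t\!\!\int_0^s\alpha(x-r)g'(x-r,1-)\langle G,\pi_{s-r}f\rangle\,dr\,ds.
\end{align*}
On the triangle $\{0\le r\le s\le t\}$ the integrand of the double integral is bounded by $\beta e^{\beta t}\|f\|$ (using that $G$ is a probability measure), so Fubini's theorem permits interchanging the $r$- and $s$-integrations; substituting $s'=s-r$ in the resulting inner integral and interchanging the $s'$-integral with $\langle G,\cdot\rangle$, the double integral becomes
\begin{align*}
\int_0^t\alpha(x-r)g'(x-r,1-)\Big\langle G,\int_0^{t-r}\pi_{s'}f\,ds'\Big\rangle dr=\int_0^t\alpha(x-r)g'(x-r,1-)\langle G,\tilde{\Pi}_{t-r}f\rangle\,dr.
\end{align*}
Renaming $r$ as $s$ shows that $\tilde{\Pi}_tf(x)$ satisfies (\ref{5.19''}). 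Since (\ref{5.19''}) has a unique locally bounded solution, namely $\Pi_tf(x)$ (Proposition \ref{prop5.19}, whose existence and uniqueness rest on Li \cite[Lemma 2.17]{Li22}), we conclude $\Pi_tf(x)=\tilde{\Pi}_tf(x)$, which is (\ref{5.21'}).

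I expect no real obstacle; the only points requiring a word of care are the local boundedness of $\tilde{\Pi}_tf$ (needed before the uniqueness theorem applies) and the interchanges of integration, both automatic on compact time intervals. For sign-changing $f$ one may, if preferred, first carry out the computation for $f\in B(0,\infty)^{+}$ using Tonelli's theorem and then extend to all $f\in B(0,\infty)$ by the linearity of $\pi_s$, $\Pi_t$ and $\langle G,\cdot\rangle$. Alternatively there is a short probabilistic proof: applying Propositions \ref{prop2.2} and \ref{prop5.19} with $\sigma=\delta_x$ gives $\pi_sf(x)=\mathbf{P}_{\delta_x}[\langle X_s,f\rangle]$ and $\Pi_tf(x)=\mathbf{P}_{\delta_x}[\int_0^t\langle X_s,f\rangle\,ds]$, while the first-moment bound $\mathbf{P}_{\delta_x}[X_s(\infty)]\le e^{\beta s}$ from Proposition \ref{prop3.4} makes $(\omega,s)\mapsto\langle X_s,f\rangle$ integrable on $\Omega\times[0,t]$, so Fubini yields $\Pi_tf(x)=\int_0^t\mathbf{P}_{\delta_x}[\langle X_s,f\rangle]\,ds=\int_0^t\pi_sf(x)\,ds$.
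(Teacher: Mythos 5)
Your argument is correct and coincides with the paper's own proof: the paper likewise integrates (\ref{2.9}) over $(0,t]$, applies Fubini's theorem and a change of variables to show that $\int_0^t\pi_sf(x)\,ds$ satisfies (\ref{5.19''}), and then concludes by the uniqueness of the locally bounded solution. Your additional remarks on local boundedness and the alternative probabilistic route via Propositions \ref{prop2.2} and \ref{prop5.19} are sound but not needed beyond what the paper records.
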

	
	\begin{proof} By integrating both sides of the equation (\ref{2.9}) over $(0, t]$ we have
		\begin{align*}
			\int_0^t \pi_s f(x) d s
			&=\int_0^t f(x-s) d s+\int_0^t d s \int_0^s \alpha(x-r) g'(x-r,1-)\langle G, \pi_{s-r} f\rangle d r\\
			&=\int_0^t f(x-s) d s+\int_0^t \alpha(x-r) g'(x-r,1-) d r \int_r^t \langle G, \pi_{s-r} f\rangle d s\\
			&=\int_0^t f(x-s) d s+\int_0^t \alpha(x-r) g'(x-r,1-) d r \int_{0}^{\infty} G(dx) \int_0^{t-r} \pi_{s} f(x) d s,
		\end{align*}
		where the last equality follows by Fubini's theorem. Then the uniqueness of the solution to the equation (\ref{5.19''}) we obtain (\ref{5.21'}). $\hfill\square$ \end{proof}

	Now let us consider the $(\alpha,g,G,\psi)$-process. It is a natural generalization of the case of the process without immigration. Let $Y=(\Omega, \mathscr{F}, \mathscr{F}_{r,t}, Y_{t}, \mathbf{P}_{r, \sigma})$ be a c\`{a}dl\`{a}g realization of the process started from time $r\ge 0$. In view of  (\ref{2.7'}) and (\ref{4.1}), for any $t\ge r \ge 0$ and $f\in B(0,\infty)^{+}$ we have
	\begin{align}\label{5.24'}
		\mathbf{P}_{r, \sigma} \exp \{-\langle Y_t,f\rangle\}=\exp \Big\{\!-\langle\sigma, v_{r}\rangle-\int_{r}^t \psi(v_s) d s\Big\},
	\end{align}
	where $(r,x)\mapsto v_{r}(x):=u_{t-r}f(x)$ is the unique bounded positive solution on $[0,t]\times(0,\infty)$ of (\ref{5.20}). We also omit the proofs of the following results since the arguments are similar to those for the corresponding results of the process without immigration.

	\begin{proposition}\label{prop5.14'} Suppose that $\{s_1<\cdots<s_n\} \subset[0, \infty)$ and $\{f_1, \ldots, f_n\} \subset B(0,\infty)^{+}$. Then we have
		\begin{align}\label{5.21''}
			\mathbf{P}_{r, \sigma} \exp \Big\{\!-\sum_{j=1}^n \langle Y_{s_j},f_j\rangle 1_{\{r \le s_j\}}\Big\}=\exp \Big\{\!-\langle\sigma, v_{r}\rangle-\int_{r}^{s_n} \psi(v_s) d s\Big\},\quad 0 \le r \le s_n,
		\end{align}
		where $(r, x) \mapsto v_r(x)$ is the unique bounded positive solution on $[0, s_n] \times (0,\infty)$ of (\ref{5.22}). \end{proposition}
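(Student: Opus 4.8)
The plan is to argue by induction on $n\ge 1$, following the proof of Proposition \ref{prop5.14} almost verbatim and merely carrying the immigration term $\int\psi(v_s)\,ds$ along through the computation. For $n=1$ the assertion is exactly (\ref{5.24'}) together with equation (\ref{5.20}). Assuming (\ref{5.21''}) with $n$ replaced by $n-1$, it suffices, as in Proposition \ref{prop5.14}, to treat the case $0\le r\le s_1<\cdots<s_n$. First I would condition on $\mathscr{F}_{r,s_1}$ and invoke the Markov property of $Y$ to write
\begin{align*}
\mathbf{P}_{r,\sigma}\exp\Big\{-\sum_{j=1}^n\langle Y_{s_j},f_j\rangle\Big\}
=\mathbf{P}_{r,\sigma}\Big[e^{-\langle Y_{s_1},f_1\rangle}\,\mathbf{P}_{s_1,Y_{s_1}}\exp\Big\{-\sum_{j=2}^n\langle Y_{s_j},f_j\rangle\Big\}\Big].
\end{align*}
By the induction hypothesis the inner expectation equals $\exp\{-\langle Y_{s_1},\tilde v_{s_1}\rangle-\int_{s_1}^{s_n}\psi(\tilde v_s)\,ds\}$, where $(r,x)\mapsto\tilde v_r(x)$ is the unique bounded positive solution on $[s_1,s_n]\times(0,\infty)$ of (\ref{5.23}). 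The key observation — the only real difference from Proposition \ref{prop5.14} — is that the factor $\exp\{-\int_{s_1}^{s_n}\psi(\tilde v_s)\,ds\}$ is deterministic, so it can be pulled out of the outer expectation $\mathbf{P}_{r,\sigma}$.

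Next I would apply the already-proved case $n=1$ to the single time $s_1$ and the function $f_1+\tilde v_{s_1}\in B(0,\infty)^{+}$, obtaining
\begin{align*}
\mathbf{P}_{r,\sigma}\exp\{-\langle Y_{s_1},f_1+\tilde v_{s_1}\rangle\}
=\exp\Big\{-\langle\sigma,v_r\rangle-\int_r^{s_1}\psi(v_s)\,ds\Big\},
\end{align*}
where $(r,x)\mapsto v_r(x)$ is the unique bounded positive solution on $[0,s_1]\times(0,\infty)$ of equation (\ref{5.24}) with terminal data $f_1+\tilde v_{s_1}$. Setting $v_r:=\tilde v_r$ for $s_1<r\le s_n$, one checks from (\ref{5.23}) and (\ref{5.24}), exactly as in Proposition \ref{prop5.14}, that $(r,x)\mapsto v_r(x)$ is a bounded positive solution of (\ref{5.22}) on $[0,s_n]\times(0,\infty)$; and, since $v_s=\tilde v_s$ for $s_1<s\le s_n$, the two immigration integrals splice together,
\begin{align*}
\int_r^{s_1}\psi(v_s)\,ds+\int_{s_1}^{s_n}\psi(\tilde v_s)\,ds=\int_r^{s_n}\psi(v_s)\,ds .
\end{align*}
Combining the three displays yields (\ref{5.21''}) for $n$. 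The uniqueness of the bounded positive solution of (\ref{5.22}) is the same Gronwall estimate as in Proposition \ref{prop5.14}, unaffected by the presence of immigration, since $\psi$ enters only the exponent and not the equation defining $v_r$.

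The hard part will be the bookkeeping at the junction $r=s_1$: one must verify simultaneously that the piecewise-defined $v$ really does satisfy (\ref{5.22}) across $s_1$ and that the deterministic factor extracted in the first step combines with the factor produced by the $n=1$ step into $\exp\{-\int_r^{s_n}\psi(v_s)\,ds\}$. Finiteness of these integrals is not an issue: since $L$ is a finite measure, $\psi(f)\le L(\mathfrak{N}(0,\infty)^{\circ})<\infty$ for every $f\in B(0,\infty)^{+}$, so $\int_r^{s_n}\psi(v_s)\,ds\le(s_n-r)\,L(\mathfrak{N}(0,\infty)^{\circ})<\infty$.
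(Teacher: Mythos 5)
Your proposal is correct and is exactly the argument the paper intends: the paper omits this proof, remarking only that it is "similar to those for the corresponding results of the process without immigration," i.e.\ the induction of Proposition \ref{prop5.14} with the deterministic factor $\exp\{-\int\psi(v_s)\,ds\}$ pulled out of the conditional expectation and spliced across $s_1$, which is precisely what you do. Your added observations — that $\psi$ does not enter the equation defining $v_r$ (so uniqueness via Gronwall is untouched) and that finiteness of $\int_r^{s_n}\psi(v_s)\,ds$ follows from $L$ being a finite measure — are both correct.
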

	
	\begin{proposition}\label{prop5.15'} Suppose that $t\ge 0$ and $\lambda(d s)$ is a finite Borel measure on $[0, t]$. Let $(s, x) \mapsto f_s(x)$ be a bounded positive Borel function on $[0, t] \times (0,\infty)$. Then we have
		\begin{align}\label{5.25''}
			\mathbf{P}_{r, \sigma} \exp \Big\{\!-\int_{[r, t]} \langle Y_{s},f_s\rangle \lambda(d s)\Big\}=\exp \Big\{\!-\langle\sigma, v_{r}\rangle-\int_{r}^t \psi(v_s) d s\Big\},\quad 0\le r\le t,
		\end{align}
		where $(r, x) \mapsto v_r(x)$ is the unique bounded positive solution on $[0, t] \times (0,\infty)$ of (\ref{5.26}). \end{proposition}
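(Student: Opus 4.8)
\medskip\noindent The plan is to run the three-step scheme of the proof of Proposition~\ref{prop5.15}, now using Proposition~\ref{prop5.14'} as the basic input and carrying the immigration functional $\int_{r}^{t}\psi(v_s)\,ds$ along at each stage. The only feature of the immigration that enters is that $L$ is a \emph{finite} measure on $\mathfrak{N}(0,\infty)^{\circ}$: this gives $\psi(g)\le L(\mathfrak{N}(0,\infty)^{\circ})<\infty$ for every $g\in B(0,\infty)^{+}$, so that the right-hand side of~(\ref{5.25''}) always makes sense, and, if $g_n\to g$ pointwise with $\sup_n\|g_n\|<\infty$, then $\psi(g_n)\to\psi(g)$ by two applications of dominated convergence --- first inside $\langle\nu,\cdot\rangle$ for each fixed $\nu\in\mathfrak{N}(0,\infty)^{\circ}$, which is a finite measure, and then in the integral against $L$. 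I shall invoke this remark repeatedly.

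\medskip\noindent\emph{Step 1 (uniformly continuous $f$).}\quad First I would assume $(s,x)\mapsto f_s(x)$ is uniformly continuous on $[0,t]\times(0,\infty)$ and fix the dyadic mesh $\epsilon_n(k)=k/2^n$. Applying Proposition~\ref{prop5.14'} with time points $s_j=\epsilon_n(j)t$ and weight functions $f_{\epsilon_n(j)t}\,\lambda\big((\epsilon_n(j)t,\epsilon_n(j+1)t]\cap[0,t]\big)$ yields, in parallel with~(\ref{5.27}),
\begin{small}
\begin{align*}
\mathbf{P}_{r,\sigma}\exp\Big\{\!-\sum_{k=0}^{2^n}\langle Y_{\epsilon_n(k)t},f_{\epsilon_n(k)t}\rangle\,\lambda\big((\epsilon_n(k)t,\epsilon_n(k+1)t]\cap[0,t]\big)1_{\{r\le\epsilon_n(k)t\}}\Big\}=\exp\Big\{\!-\langle\sigma,v_n(r,\cdot)\rangle-\int_{r}^{t}\psi(v_n(s,\cdot))\,ds\Big\},
\end{align*}
\end{small}where $(r,x)\mapsto v_n(r,x)$ solves the discretized equation~(\ref{5.28}); from that equation together with $0\le 1-g\le 1$ one checks that $\{v_n\}$ is uniformly bounded on $[0,t]\times(0,\infty)$, exactly as in the proof of Proposition~\ref{prop5.15}. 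Uniform continuity then forces $v_r(x):=\lim_n v_n(r,x)$ to exist, be bounded, and solve~(\ref{5.26}); the left-hand sides are Riemann--Stieltjes approximations to $\mathbf{P}_{r,\sigma}\exp\{-\int_{[r,t]}\langle Y_s,f_s\rangle\,\lambda(ds)\}$ and converge to it by bounded convergence, using that $s\mapsto Y_s$ is right-continuous. On the right, $v_n(s,\cdot)\to v_s$ boundedly and pointwise, hence $\psi(v_n(s,\cdot))\to\psi(v_s)$ by the remark above and then $\int_{r}^{t}\psi(v_n(s,\cdot))\,ds\to\int_{r}^{t}\psi(v_s)\,ds$ by one more dominated convergence. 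This proves~(\ref{5.25''}) for uniformly continuous $f$.

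\medskip\noindent\emph{Steps 2 and 3 (extension and uniqueness).}\quad Let $B_1$ be the set of bounded positive Borel functions $(s,x)\mapsto f_s(x)$ on $[0,t]\times(0,\infty)$ for which a bounded positive solution of~(\ref{5.26}) exists with~(\ref{5.25''}) valid. As in the proof of Proposition~\ref{prop5.15}, $B_1$ is closed under bounded pointwise convergence: the associated solutions stay uniformly bounded, converge to a solution of~(\ref{5.26}), and the Laplace functionals pass to the limit on both sides --- the immigration factor once again via the sequential continuity of $\psi$. Since Step~1 places every uniformly continuous function in $B_1$, Li~\cite[Proposition~1.3]{Li22} gives $B_1=B([0,t]\times(0,\infty))^{+}$. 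Finally, uniqueness for~(\ref{5.26}) is the Gronwall argument of Proposition~\ref{prop5.15}: two bounded positive solutions $v,\tilde v$ satisfy $\|\tilde v_r-v_r\|\le K\int_{r}^{t}\|\tilde v_s-v_s\|\,ds$ with $K$ depending only on $\|\alpha\|$ and the Lipschitz constant of $z\mapsto g(\cdot,z)$ on $[0,1]$, whence $v=\tilde v$ after reversing time. The one point that is genuinely new relative to Proposition~\ref{prop5.15}, and where I would be most careful, is the interchange of the limits --- in $n$ in Step~1, and the bounded pointwise limit in Step~2 --- with the immigration functional $\int_{r}^{t}\psi(v_s)\,ds$; this is legitimate precisely because $L$ is finite, and everything else is a transcription of the non-immigration argument.
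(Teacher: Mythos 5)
Your proposal is correct and follows exactly the route the paper intends: the paper omits this proof, stating only that it is obtained from the argument for Proposition~\ref{prop5.15} by the same three-step scheme with Proposition~\ref{prop5.14'} as input, which is precisely what you carry out. Your explicit justification of the passage to the limit in the immigration term $\int_r^t\psi(v_s)\,ds$ via the finiteness of $L$ is the one genuinely new point, and you handle it correctly.
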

	
	\begin{proposition}\label{prop5.16'} Let $t \ge 0$ be given. Let $h \in B(0,\infty)^{+}$and let $(s, x) \mapsto f_s(x)$ be a bounded positive Borel function on $[0, t] \times (0,\infty)$. Then for $0 \le r \le t$ we have
		\begin{align}\label{5.29''}
			\mathbf{P}_{r, \sigma} \exp \Big\{\!-\langle Y_{t},h\rangle-\int_{r}^{t} \langle Y_s,f_s\rangle ds\Big\}=\exp \Big\{\!-\langle\sigma, v_{r}\rangle-\int_{r}^t \psi(v_s) d s\Big\},
		\end{align}
		where $(r, x) \mapsto v_r(x)$ is the unique bounded positive solution on $[0, t] \times (0,\infty)$ of (\ref{5.30}). \end{proposition}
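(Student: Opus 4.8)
The plan is to obtain Proposition \ref{prop5.16'} as a direct specialization of Proposition \ref{prop5.15'}, exactly mirroring the way Proposition \ref{prop5.16} is derived from Proposition \ref{prop5.15}; the presence of the immigration term $\psi$ causes no extra difficulty, since it enters both (\ref{5.25''}) and (\ref{5.30}) through the same Lebesgue integral $\int_r^t \psi(v_s)\,ds$.

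First I would set $\lambda(ds)=ds+\delta_t(ds)$, which is a finite Borel measure on $[0,t]$, and define the bounded positive Borel function $(s,x)\mapsto\tilde f_s(x)$ on $[0,t]\times(0,\infty)$ by $\tilde f_s(x)=1_{\{s<t\}}f_s(x)+1_{\{s=t\}}h(x)$. Applying Proposition \ref{prop5.15'} with this choice of $\lambda$ and $\tilde f$, the functional on the left of (\ref{5.25''}) becomes
\[
\int_{[r,t]}\langle Y_s,\tilde f_s\rangle\,\lambda(ds)=\int_r^t\langle Y_s,f_s\rangle\,ds+\langle Y_t,h\rangle,
\]
because $\{t\}$ is Lebesgue-null; this is precisely the exponent on the left of (\ref{5.29''}). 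Hence Proposition \ref{prop5.15'} yields that $\mathbf{P}_{r,\sigma}\exp\{-\langle Y_t,h\rangle-\int_r^t\langle Y_s,f_s\rangle\,ds\}$ equals $\exp\{-\langle\sigma,v_r\rangle-\int_r^t\psi(v_s)\,ds\}$, where $(r,x)\mapsto v_r(x)$ is the unique bounded positive solution on $[0,t]\times(0,\infty)$ of equation (\ref{5.26}) with $f_s$ replaced by $\tilde f_s$ and $\lambda$ as above.

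The last step is to check that this equation is literally (\ref{5.30}). In (\ref{5.26}) the nonlinear branching term is integrated against $ds$ only, hence it is untouched by the atom at $t$; the source term splits as $\int_{[r,t]}\tilde f_s(x-s+r)\,\lambda(ds)=\int_r^t f_s(x-s+r)\,ds+h(x-t+r)$, the Lebesgue part contributing the integral of $f$ and the atom $\delta_t$ contributing $h(x-t+r)$. Substituting gives (\ref{5.30}) verbatim, and its uniqueness among bounded positive solutions is exactly the uniqueness clause of Proposition \ref{prop5.15'}. I do not expect any genuine obstacle; the one point to handle carefully is isolating the atom at $t$ so that it feeds only the source term and not the branching (or immigration) integrals, which is what makes (\ref{5.26}) collapse cleanly to (\ref{5.30}).
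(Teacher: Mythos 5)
Your proof is correct and matches the paper's approach exactly: the paper omits the proof of Proposition \ref{prop5.16'} as being analogous to that of Proposition \ref{prop5.16}, which is precisely the specialization of Proposition \ref{prop5.15} (here \ref{prop5.15'}) to $\lambda(ds)=ds+\delta_t(ds)$ and $\tilde f_s(x)=1_{\{s<t\}}f_s(x)+1_{\{s=t\}}h(x)$ that you carry out. Your verification that the atom at $t$ feeds only the source term and that the $\psi$-integral is unaffected is exactly the right bookkeeping.
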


	\begin{corollary}\label{cor5.17'} Let $Y=(\Omega, \mathscr{F}, \mathscr{F}_{t}, Y_t, \mathbf{P}_{\sigma})$ be a c\`{a}dl\`{a}g realization of the $(\alpha,g,G,\psi)$-process started from time zero. Then for $t\ge 0$ and $f,h\in B(0,\infty)^{+}$ we have
		\begin{align}\label{5.31''}
			\mathbf{P}_{\sigma} \exp \Big\{-\langle Y_t,h\rangle- \int_0^t \langle Y_s,f\rangle ds\Big\}
			=
			\exp \Big\{\!-\langle \sigma,v_{t}\rangle -\int_{0}^{t}\psi(v_{s})ds\Big\},
		\end{align}
		where $(t, x) \mapsto v_t(x)$ is the unique locally bounded positive solution of (\ref{5.32}).
	\end{corollary}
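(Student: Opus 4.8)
The plan is to obtain Corollary~\ref{cor5.17'} from Proposition~\ref{prop5.16'} by specializing the data and then passing from the ``backward'' time parametrization used in (\ref{5.30}) to the ``forward'' parametrization of (\ref{5.32}), exactly as Corollary~\ref{cor5.17} is deduced from Proposition~\ref{prop5.16}; the only additional point is to carry the $\psi$-integral through the reparametrization of time.

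First I would apply Proposition~\ref{prop5.16'} with starting time $r=0$, terminal time $t$, the given $h$, and the $s$-independent choice $f_s\equiv f$. Since $\mathbf{P}_\sigma=\mathbf{P}_{0,\sigma}$, formula (\ref{5.29''}) reads
\begin{align*}
\mathbf{P}_\sigma\exp\Big\{-\langle Y_t,h\rangle-\int_0^t\langle Y_s,f\rangle\,ds\Big\}=\exp\Big\{-\langle\sigma,V_0\rangle-\int_0^t\psi(V_s)\,ds\Big\},
\end{align*}
where $(\beta,x)\mapsto V_\beta(x)$ is the unique bounded positive solution on $[0,t]\times(0,\infty)$ of (\ref{5.30}) with $f_s\equiv f$, i.e.
\begin{align*}
V_\beta(x)=h(x-t+\beta)+\int_\beta^t f(x-s+\beta)\,ds+\int_\beta^t\alpha(x-s+\beta)\big[1-g(x-s+\beta,\langle G,e^{-V_s}\rangle)\big]\,ds.
\end{align*}

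Next I would set $w_r(x):=V_{t-r}(x)$ for $0\le r\le t$. Substituting $s=u+t-r$ in the two integrals above transforms the equation for $V$ into
\begin{align*}
w_r(x)=h(x-r)+\int_0^r f(x-u)\,du+\int_0^r\alpha(x-u)\big[1-g(x-u,\langle G,e^{-w_{r-u}}\rangle)\big]\,du,\qquad 0\le r\le t,
\end{align*}
which is precisely equation (\ref{5.32}) with $t$ replaced by $r$; letting $t$ vary (and invoking the uniqueness below) we obtain a locally bounded positive solution $v_\cdot:=w_\cdot$ of (\ref{5.32}) on $[0,\infty)$. Moreover, the substitution $u=t-s$ gives $\int_0^t\psi(V_s)\,ds=\int_0^t\psi(w_{t-s})\,ds=\int_0^t\psi(w_u)\,du=\int_0^t\psi(v_u)\,du$, and $V_0=w_t=v_t$. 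Inserting these two identities into the first display yields (\ref{5.31''}).

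Finally I would record the uniqueness of the locally bounded positive solution of (\ref{5.32}): restricted to each interval $[0,T]$ this is the uniqueness assertion of Proposition~\ref{prop5.15'} carried through the same reparametrization, or equivalently a direct Gronwall estimate as in Step~3 of the proof of Proposition~\ref{prop5.15}, using that $v\mapsto e^{-v}$ is Lipschitz on bounded ranges and that $\alpha$ and $g'(\cdot,1-)$ are bounded. I do not anticipate any genuine obstacle; the only step demanding care is checking that the single substitution $s=u+t-r$ transforms all three terms of (\ref{5.30}) correctly and is consistent with the change of variable in the $\psi$-term, which is routine bookkeeping.
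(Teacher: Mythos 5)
Your proposal is correct and follows essentially the same route the paper intends: the paper omits this proof precisely because Corollary~\ref{cor5.17'} is the specialization of Proposition~\ref{prop5.16'} to $r=0$ and $f_s\equiv f$, followed by the time reversal $v_r:=V_{t-r}$ that converts the backward equation (\ref{5.30}) into (\ref{5.32}), exactly as Corollary~\ref{cor5.17} is obtained from Proposition~\ref{prop5.16}. Your bookkeeping of the substitution (including the $\psi$-integral and the consistency in $t$ via uniqueness) checks out.
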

	
	\begin{corollary}\label{cor5.18'} Let $Y=(\Omega, \mathscr{F}, \mathscr{F}_{t}, Y_t, \mathbf{P}_{\sigma})$ be a c\`{a}dl\`{a}g realization of the $(\alpha,g,G,\psi)$-process started from time zero. Then for $t\ge 0$ and $f\in B(0,\infty)^{+}$ we have
		\begin{align}\label{5.31''.}
			\mathbf{P}_{\sigma} \exp\Big\{- \int_0^t \langle Y_s,f\rangle ds\Big\}
			=
			\exp \Big\{-\langle \sigma,v_{t}f\rangle -\int_{0}^{t}\psi(v_{s}f)ds\Big\},
		\end{align}
		where $v_t f(x)$ is the unique locally bounded positive solution of (\ref{5.32'}).
	\end{corollary}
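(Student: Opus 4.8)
The plan is to read this statement off from Corollary~\ref{cor5.17'} by specializing the terminal datum there to $h\equiv 0$. Since $0\in B(0,\infty)^{+}$, Corollary~\ref{cor5.17'} applies with this choice and gives
\begin{align*}
	\mathbf{P}_{\sigma}\exp\Big\{-\int_0^t\langle Y_s,f\rangle\,ds\Big\}
	=\exp\Big\{-\langle\sigma,v_t\rangle-\int_0^t\psi(v_s)\,ds\Big\},
\end{align*}
where $(t,x)\mapsto v_t(x)$ is the unique locally bounded positive solution of equation (\ref{5.32}) in which $h$ is replaced by $0$. It then suffices to observe that equation (\ref{5.32}) with $h\equiv 0$ is literally equation (\ref{5.32'}); its unique locally bounded positive solution is $v_tf$, a fact already recorded for the process without immigration in Corollary~\ref{cor5.18}. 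Hence $v_t=v_tf$, and the displayed identity is precisely (\ref{5.31''.}).

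In short, there is essentially nothing to prove beyond the two routine remarks that $h=0$ is an admissible boundary value in Corollary~\ref{cor5.17'} and that setting $h=0$ collapses (\ref{5.32}) onto the time-homogeneous equation (\ref{5.32'}). If one prefers an argument that does not quote Corollary~\ref{cor5.17'}, the same conclusion follows, exactly as indicated for the process without immigration, by applying Proposition~\ref{prop5.16'} with $r=0$, the constant-in-$s$ function $f_s\equiv f$, and $h\equiv 0$, and then carrying out the re-indexing $v_r\leftrightarrow v_{t-r}$ that passes from Proposition~\ref{prop5.16} to Corollary~\ref{cor5.17}; equation (\ref{5.30}) then reduces to (\ref{5.32'}).

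The only point deserving a word of care is the well-posedness assertion implicit in the statement, namely that (\ref{5.32'}) has a \emph{unique} locally bounded positive solution. Both existence and uniqueness are exactly those established in Corollary~\ref{cor5.18}; uniqueness, in particular, is the standard Gronwall estimate used in Step~3 of the proof of Proposition~\ref{prop5.15}, localized to each interval $[0,T]$: if $\tilde v$ is another such solution, then on $[0,T]$ one has $\|\tilde v_{t-r}f-v_{t-r}f\|\le K\int_0^r\|\tilde v_{t-s}f-v_{t-s}f\|\,ds$ for a suitable constant $K\ge 0$, whence $\tilde v\equiv v$. No genuine obstacle arises.
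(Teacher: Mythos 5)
Your proposal is correct and matches the paper's intended route: the paper omits the proof of Corollary~\ref{cor5.18'} as being analogous to the non-immigration case, and the derivation there is precisely the specialization $h\equiv 0$ of Corollary~\ref{cor5.17'} (equivalently, of Proposition~\ref{prop5.16'} after the re-indexing $v_r\leftrightarrow v_{t-r}$), under which (\ref{5.32}) collapses to (\ref{5.32'}). Your remarks on admissibility of $h=0$ and on uniqueness via Gronwall are accurate and complete.
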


	\begin{proposition}\label{prop5.19'} For any $t \ge 0$ and $\sigma \in \mathfrak{N}(0, \infty)$ we have
		\begin{align}\label{5.19'.}
			\mathbf{P}_{\sigma}\Big[\int_0^t \langle Y_s,f\rangle ds\Big]= \langle\sigma, \Pi_{t} f\rangle+\int_{\mathfrak{N}(0,\infty)^{\circ}} L(d \nu) \int_0^t \langle \nu,\Pi_s f \rangle ds, \quad f \in B(0, \infty),
		\end{align}
		where $\Pi_{t}f(x)$ is the unique solution to (\ref{5.19''}). \end{proposition}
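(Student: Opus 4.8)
The plan is to follow the scaling-and-differentiation scheme used in the proofs of Propositions \ref{prop2.2}, \ref{prop2.2'} and \ref{prop5.19}. Fix $f\in B(0,\infty)^{+}$ and, for $\theta>0$, replace $f$ by $\theta f$ in Corollary \ref{cor5.18'}:
\[
\mathbf{P}_{\sigma}\exp\Big\{\!-\theta\int_0^t\langle Y_s,f\rangle\,ds\Big\}
=\exp\Big\{\!-\langle\sigma,v_t(\theta f)\rangle-\int_0^t\psi(v_s(\theta f))\,ds\Big\}.
\]
Since $g(x,1)\equiv 1$, the null function solves (\ref{5.32'}) when $f\equiv 0$, hence $v_s(0)\equiv 0$ and the right-hand exponent vanishes at $\theta=0$. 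Differentiating both sides at $\theta=0$: the left-hand side produces $-\mathbf{P}_{\sigma}\big[\int_0^t\langle Y_s,f\rangle\,ds\big]$; the term $\langle\sigma,v_t(\theta f)\rangle$ produces $-\langle\sigma,\Pi_t f\rangle$ by the identity $\Pi_t f=\frac{\partial}{\partial\theta}v_t(\theta f)\big|_{\theta=0}$ already established for $f\in B(0,\infty)^{+}$ (by the same differentiation of (\ref{5.32'}) as for (\ref{5.19''})); and, writing $\psi(v_s(\theta f))=\int_{\mathfrak{N}(0,\infty)^{\circ}}(1-e^{-\langle\nu,v_s(\theta f)\rangle})L(d\nu)$ and using $v_s(0)\equiv 0$ once more, the immigration term produces
\[
\frac{\partial}{\partial\theta}\Big|_{\theta=0}\int_0^t\psi(v_s(\theta f))\,ds
=\int_0^t\!ds\int_{\mathfrak{N}(0,\infty)^{\circ}}\langle\nu,\Pi_s f\rangle\,L(d\nu)
=\int_{\mathfrak{N}(0,\infty)^{\circ}}\!L(d\nu)\int_0^t\langle\nu,\Pi_s f\rangle\,ds
\]
by Tonelli's theorem. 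Equating the two derivatives gives (\ref{5.19'.}) for $f\in B(0,\infty)^{+}$, and then for $f\in B(0,\infty)$ by writing $f=f^{+}-f^{-}$ and using linearity of both sides in $f$.

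The step I expect to take the most care is justifying the interchange of $\frac{\partial}{\partial\theta}$ with $\mathbf{P}_{\sigma}$ and with the $L(d\nu)$- and $ds$-integrals, particularly since no integrability condition has been imposed on $L$ and the asserted identity is to hold in $[0,\infty]$. The clean way to handle this, which I would adopt, is to replace pointwise differentiation by a monotone limit: since $\theta^{-1}(1-e^{-\theta z})\uparrow z$ as $\theta\downarrow 0$, monotone convergence gives $\theta^{-1}\big(1-\mathbf{P}_{\sigma}\exp\{-\theta\int_0^t\langle Y_s,f\rangle\,ds\}\big)\uparrow\mathbf{P}_{\sigma}\big[\int_0^t\langle Y_s,f\rangle\,ds\big]$; and, using that $f\mapsto v_s f$ is nondecreasing and concave (standard for such log-Laplace functionals -- e.g.\ by H\"older's inequality applied to (\ref{5.31'}), so that $\theta^{-1}v_s(\theta f)\uparrow\Pi_s f$), one checks that $\theta\mapsto\theta^{-1}\langle\sigma,v_t(\theta f)\rangle$ and $\theta\mapsto\theta^{-1}(1-e^{-\langle\nu,v_s(\theta f)\rangle})$ are nonincreasing in $\theta>0$ with limits $\langle\sigma,\Pi_t f\rangle$ and $\langle\nu,\Pi_s f\rangle$; further applications of monotone convergence in $\nu$ and $s$ then identify the limit of the normalized right-hand side. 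This keeps the argument valid with values in $[0,\infty]$.

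Alternatively -- and probably most economically -- the scaling argument can be bypassed entirely. By Proposition \ref{prop4.0} and Tonelli's theorem,
\[
\mathbf{P}_{\sigma}\Big[\int_0^t\langle Y_s,f\rangle\,ds\Big]
=\int_0^t\langle\sigma,\pi_s f\rangle\,ds
+\int_0^t\!ds\int_{\mathfrak{N}(0,\infty)^{\circ}}\!L(d\nu)\int_0^s\langle\nu,\pi_r f\rangle\,dr,
\]
and then (\ref{5.21'}) of Proposition \ref{prop5.21}, i.e.\ $\Pi_t f=\int_0^t\pi_s f\,ds$, applied to the first summand and -- after one more use of Tonelli together with the elementary identity $\langle\nu,\int_0^s\pi_r f\,dr\rangle=\int_0^s\langle\nu,\pi_r f\rangle\,dr$ -- inside the $L(d\nu)$-integral, rewrites the right-hand side as $\langle\sigma,\Pi_t f\rangle+\int_{\mathfrak{N}(0,\infty)^{\circ}}L(d\nu)\int_0^t\langle\nu,\Pi_s f\rangle\,ds$. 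No fresh existence statement is needed, since the locally bounded solution $\Pi_t f$ of (\ref{5.19''}) is furnished by Proposition \ref{prop5.19}.
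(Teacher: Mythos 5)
Your primary argument is precisely the one the paper intends: the proof of Proposition \ref{prop5.19'} is omitted there with the remark that it parallels the immigration-free case, i.e.\ Proposition \ref{prop5.19}, which in turn is obtained by differentiating the occupation-time Laplace functional (here Corollary \ref{cor5.18'}) in $\theta$ at $\theta=0$, exactly as in Propositions \ref{prop2.2} and \ref{prop2.2'}; your monotone-convergence justification of the interchanges (via concavity of $\theta\mapsto v_s(\theta f)$ and $\theta^{-1}(1-e^{-\theta z})\uparrow z$) is a sound and in fact more careful rendering of that scheme than the paper spells out. Your alternative route --- integrating the first-moment formula (\ref{4.0}) of Proposition \ref{prop4.0} over $s\in[0,t]$, applying Fubini--Tonelli, and invoking the identity $\Pi_t f=\int_0^t\pi_s f\,ds$ of Proposition \ref{prop5.21} --- is genuinely different and arguably preferable: it bypasses all differentiation and interchange-of-limit issues and reduces the claim to two results already established, with the Fubini steps immediate for $f\ge 0$ (Tonelli, values in $[0,\infty]$) and for bounded signed $f$ whenever the right-hand side is finite, which is the same implicit proviso under which the paper states the result for $f\in B(0,\infty)$. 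Both routes are correct; no gap.
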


	\begin{proposition}\label{prop5.20'} Suppose that $\|g''(\cdot,1-)\|<\infty$. Then for any $t \ge 0$, $f \in B(0,\infty)$ and $\sigma \in \mathfrak{N}(0, \infty)$ we have
		\begin{align}\label{5.20'.}
			\mathbf{P}_{\sigma}\bigg[\Big(\int_0^t \langle Y_s,f\rangle ds\Big)^2\bigg]=
			&\Big[\langle\sigma, \Pi_{t} f\rangle+\int_{\mathfrak{N}(0,\infty)^{\circ}} L(d \nu) \int_0^t \langle \nu,\Pi_s f \rangle ds\Big]^2 + \langle\sigma, \Gamma_{t} f\rangle \nonumber\\
			&+\int_{\mathfrak{N}(0,\infty)^{\circ}} L(d \nu) \int_0^t \langle \nu,\Gamma_s f \rangle+\langle \nu,\Pi_s f \rangle^2 ds,
		\end{align}
		where $\Pi_{t}f(x)$ and $\Gamma_{t}f(x)$ are the unique solution to (\ref{5.19''}) and (\ref{5.20''}), respectively. \end{proposition}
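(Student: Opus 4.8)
The plan is to mimic the proofs of Propositions \ref{prop2.2} and \ref{prop2.2'}, replacing the transition Laplace functional (\ref{2.5}) by the occupation-time Laplace functional of the $(\alpha,g,G,\psi)$-process given in Corollary \ref{cor5.18'}. Fix $t\ge 0$, $\sigma\in\mathfrak{N}(0,\infty)$ and $f\in B(0,\infty)^{+}$. For $\theta>0$ let $v_{s}(\theta f)$ denote the unique locally bounded positive solution of (\ref{5.32'}) with $f$ replaced by $\theta f$, and put
\begin{align*}
	\Psi_{\theta}:=\langle\sigma,v_{t}(\theta f)\rangle+\int_{0}^{t}\psi(v_{s}(\theta f))\,ds,
\end{align*}
so that Corollary \ref{cor5.18'} reads $\mathbf{P}_{\sigma}\exp\{-\theta\int_{0}^{t}\langle Y_{s},f\rangle\,ds\}=e^{-\Psi_{\theta}}$. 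As in the proofs of Propositions \ref{prop2.2} and \ref{prop2.2'} (see the discussion following Proposition \ref{prop5.20}), differentiating (\ref{5.32'}) in $\theta$ and using Gronwall's inequality together with the hypothesis $\|g''(\cdot,1-)\|<\infty$ shows that $\theta\mapsto v_{s}(\theta f)(x)$ is twice continuously differentiable, with $\theta$-derivatives bounded uniformly in $(s,x)$ on compact $\theta$-intervals, and that $\Pi_{s}f=\partial_{\theta}v_{s}(\theta f)|_{\theta=0}$ and $\Gamma_{s}f=-\partial_{\theta}^{2}v_{s}(\theta f)|_{\theta=0}$ are the unique locally bounded solutions of (\ref{5.19''}) and (\ref{5.20''}).

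The next step is to differentiate the identity $\mathbf{P}_{\sigma}\exp\{-\theta\int_{0}^{t}\langle Y_{s},f\rangle\,ds\}=e^{-\Psi_{\theta}}$ twice in $\theta$. For $\theta>0$ the exponential weight makes the relevant expectations finite and legitimizes moving $\partial_{\theta}$ inside $\mathbf{P}_{\sigma}$, giving
\begin{align*}
	\mathbf{P}_{\sigma}\Big[\Big(\int_{0}^{t}\langle Y_{s},f\rangle\,ds\Big)^{2}\exp\Big\{\!-\theta\!\int_{0}^{t}\langle Y_{s},f\rangle\,ds\Big\}\Big]=\big[(\Psi_{\theta}')^{2}-\Psi_{\theta}''\big]e^{-\Psi_{\theta}},\quad\theta>0,
\end{align*}
the primes standing for $\partial_{\theta}$. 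To evaluate the right-hand side at $\theta=0$, note that $v_{s}(0)\equiv 0$, so $\Psi_{0}=0$; writing $\psi(v_{s}(\theta f))=\int_{\mathfrak{N}(0,\infty)^{\circ}}(1-e^{-\langle\nu,v_{s}(\theta f)\rangle})\,L(d\nu)$ and using the elementary bound $v_{s}(\theta f)\le\theta\Pi_{s}f$ (a Jensen-type consequence of Corollary \ref{cor5.18'} and Proposition \ref{prop5.19'}), a dominated-convergence argument on the difference quotients yields $\partial_{\theta}\psi(v_{s}(\theta f))|_{0}=\int_{\mathfrak{N}(0,\infty)^{\circ}}\langle\nu,\Pi_{s}f\rangle\,L(d\nu)$ and, via a Taylor expansion of $1-e^{-\langle\nu,v_{s}(\theta f)\rangle}$, $\partial_{\theta}^{2}\psi(v_{s}(\theta f))|_{0}=-\int_{\mathfrak{N}(0,\infty)^{\circ}}[\langle\nu,\Gamma_{s}f\rangle+\langle\nu,\Pi_{s}f\rangle^{2}]\,L(d\nu)$. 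Combining these with $\partial_{\theta}v_{t}(\theta f)|_{0}=\Pi_{t}f$ and $\partial_{\theta}^{2}v_{t}(\theta f)|_{0}=-\Gamma_{t}f$ gives
\begin{align*}
	\Psi_{0}'=\langle\sigma,\Pi_{t}f\rangle+\int_{0}^{t}\!\!\int_{\mathfrak{N}(0,\infty)^{\circ}}\!\langle\nu,\Pi_{s}f\rangle\,L(d\nu)\,ds
\end{align*}
and $\Psi_{0}''=-\langle\sigma,\Gamma_{t}f\rangle-\int_{0}^{t}\int_{\mathfrak{N}(0,\infty)^{\circ}}[\langle\nu,\Gamma_{s}f\rangle+\langle\nu,\Pi_{s}f\rangle^{2}]\,L(d\nu)\,ds$.

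Finally, letting $\theta\downarrow 0$, the left-hand side increases monotonically to $\mathbf{P}_{\sigma}[(\int_{0}^{t}\langle Y_{s},f\rangle\,ds)^{2}]$ by monotone convergence, while $[(\Psi_{\theta}')^{2}-\Psi_{\theta}'']e^{-\Psi_{\theta}}\to(\Psi_{0}')^{2}-\Psi_{0}''$ by the continuity established above (if this limit is $+\infty$, the identity holds trivially in $[0,+\infty]$). Inserting the formulas for $\Psi_{0}'$ and $\Psi_{0}''$ and applying Fubini's theorem gives (\ref{5.20'.}) for $f\in B(0,\infty)^{+}$. The extension to general $f\in B(0,\infty)$ is immediate by linearity, since (\ref{5.19''}) is linear and (\ref{5.20''}) bilinear in $f$ while the left-hand side of (\ref{5.20'.}) is quadratic in $f$, exactly as in Proposition \ref{prop2.2'}. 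The main obstacle is purely technical: establishing the $\theta$-smoothness of $v_{s}(\theta f)$ with controlled derivatives and rigorously justifying the two interchanges — of $\partial_{\theta}$ with $\mathbf{P}_{\sigma}$ and with the $L(d\nu)$-integral; once these are secured the identity drops out of a routine chain-rule computation.
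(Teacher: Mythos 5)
Your proposal is correct and follows essentially the same route as the paper: the authors omit the proof, stating that it is obtained exactly as Propositions \ref{prop2.2} and \ref{prop2.2'} (and their occupation-time analogues, Propositions \ref{prop5.19} and \ref{prop5.20}), namely by differentiating the Laplace functional of Corollary \ref{cor5.18'} twice in $\theta$ and evaluating at $\theta=0$, using $\Pi_t f=\partial_\theta v_t(\theta f)\big|_{\theta=0}$ and $\Gamma_t f=-\partial_\theta^2 v_t(\theta f)\big|_{\theta=0}$. The only new ingredient relative to the no-immigration case is the term $\int_0^t\psi(v_s(\theta f))\,ds$, whose first and second $\theta$-derivatives at $0$ you compute correctly.
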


	It is easy to see that Corollary~\ref{cor5.17'} gives a characterization of the joint distribution of random vector $(Y_t,Z_t)$, where
	\begin{align*}
		Z_t= \int_0^t Y_sds, \quad t\ge 0.
	\end{align*}
	Following Iscoe \cite{Iscoe86b}, we called $(Z_t: t \ge 0)$ the \textit{occupation time process} of the $(\alpha,g,G,\psi)$-process.

	\section{Limit Theorems}\label{setion6}

	In this section we give some limit theorems for the occupation time process $(Z_t: t \ge 0)$ under Condition \ref{cond1}. To this end, we need to give some estimates on the first and second moment of $(Y_t: t \ge 0)$ as follows.
	
	\begin{proposition}\label{prop5.22} Suppose that Condition \ref{cond1} holds. Then for any $x\in (0,\infty)$ and $f \in B(0, \infty)^{+}$, we have
		\begin{align*}
			\Pi_t f(x) \nearrow \Pi_{\infty} f(x)=\int_{0}^{\infty}\pi_{s}f(x) ds, \quad \text{as}\ t\rightarrow\infty,
		\end{align*}
		where $\pi_{t} f(x)$ is the unique solution to (\ref{2.9}) and
		\begin{align}\label{5.22'}
			\Pi_{\infty} f(x)=\int_0^x f(x-s) d s+\int_0^x \alpha(x-s) g'(x-s,1-) d s \int_0^{\infty}\langle G,\pi_r f\rangle d r.
		\end{align}
		Moreover, for any $f \in B(0, \infty)^{+}$ there exists $C_3 (f) \in (0,\infty)$ such that
		\begin{align}\label{5.22''}
			\Pi_{\infty}f(x)\le C_3 (f) e^{\alpha_{1}x},\quad x\in (0,\infty),
		\end{align}
		where $\alpha_{1}$ given as in Condition \ref{cond1}. \end{proposition}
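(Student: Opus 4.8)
The plan is to deduce the result from the identity $\Pi_t f(x)=\int_0^t\pi_s f(x)\,ds$ of Proposition~\ref{prop5.21} together with the exponential decay of $\langle G,\pi_t f\rangle$ already isolated in the proof of Theorem~\ref{th4.2}. Since $f\in B(0,\infty)^+$ and each $\pi_s$ is a positive kernel (for instance by the Feynman--Kac representation in Proposition~\ref{prop3.11}), the integrand $\pi_s f(x)$ is nonnegative, so by Proposition~\ref{prop5.21} the map $t\mapsto\Pi_t f(x)=\int_0^t\pi_s f(x)\,ds$ is nondecreasing; hence it converges, as $t\to\infty$, to $\Pi_\infty f(x):=\int_0^\infty\pi_s f(x)\,ds\in[0,\infty]$ by monotone convergence. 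It then remains to identify this limit with the right-hand side of (\ref{5.22'}), to check it is finite, and to establish the bound (\ref{5.22''}).

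For the explicit formula I would substitute the renewal equation (\ref{2.9}) into $\int_0^\infty\pi_s f(x)\,ds$ and apply Tonelli's theorem (legitimate since all terms are nonnegative); using that $f$ and $\alpha$ vanish off $(0,\infty)$ this gives
\begin{align*}
	\int_0^\infty\pi_s f(x)\,ds=\int_0^x f(x-s)\,ds+\int_0^x\alpha(x-r)g'(x-r,1-)\,dr\int_r^\infty\langle G,\pi_{s-r}f\rangle\,ds,
\end{align*}
and the change of variable $\tau=s-r$ in the inner integral turns the last double integral into $\int_0^x\alpha(x-r)g'(x-r,1-)\,dr\cdot\int_0^\infty\langle G,\pi_\tau f\rangle\,d\tau$, which is precisely (\ref{5.22'}). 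To see the last factor is finite I would quote the estimate established inside the proof of Theorem~\ref{th4.2}: Proposition~\ref{prop4.2'} and the bound $\|\pi_t f\|\le e^{\beta t}\|f\|$ (Proposition~\ref{prop3.11}) furnish a constant $C_T\in(0,\infty)$ with $\langle G,\pi_t f\rangle\le C_T\|f\|e^{-\alpha_1 t}$ for all $t\ge 0$, whence $\int_0^\infty\langle G,\pi_\tau f\rangle\,d\tau\le C_T\|f\|/\alpha_1<\infty$; combined with $\int_0^x\alpha(x-s)g'(x-s,1-)\,ds\le\beta x$ and $\int_0^x f(x-s)\,ds\le\|f\|x$ this already yields $\Pi_\infty f(x)<\infty$.

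For (\ref{5.22''}) I would reuse inequality (\ref{4.6}) from the proof of Theorem~\ref{th4.2}, namely $\pi_t f(x)\le f(x-t)+\beta C_T\|f\|e^{-\alpha_1 t}\int_0^{t\wedge x}e^{\alpha_1 s}\,ds\le f(x-t)+\beta C_T\|f\|\alpha_1^{-1}e^{\alpha_1 x}e^{-\alpha_1 t}$; integrating in $t$ over $(0,\infty)$ then gives $\Pi_\infty f(x)\le\|f\|x+\beta C_T\|f\|\alpha_1^{-2}e^{\alpha_1 x}$, and the elementary bound $x\le(\alpha_1 e)^{-1}e^{\alpha_1 x}$ produces (\ref{5.22''}) with $C_3(f)=\|f\|\big[(\alpha_1 e)^{-1}+\beta C_T\alpha_1^{-2}\big]$. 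In short, the proposition is essentially a corollary of Propositions~\ref{prop5.21} and~\ref{prop4.2'}; the only delicate point is the interchange of integrals and the finiteness check, with the substantive input being the exponential decay of $\langle G,\pi_t f\rangle$ furnished by the renewal-theoretic estimate of Proposition~\ref{prop4.2'}, so I do not expect a genuine obstacle.
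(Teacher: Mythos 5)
Your proposal is correct and follows essentially the same route as the paper: the paper likewise obtains \eqref{5.22'} by substituting the renewal equation \eqref{2.9} into $\Pi_\infty f(x)=\int_0^\infty \pi_t f(x)\,dt$ (via Proposition \ref{prop5.21}) and then derives \eqref{5.22''} by integrating the estimate \eqref{4.6} in $t$, bounding $\int_0^x f(x-t)\,dt\le\|f\|x\le\|f\|\alpha_1^{-1}e^{\alpha_1 x}$ to get $C_3(f)=\|f\|(\alpha_1^{-1}+\beta C_T\alpha_1^{-2})$. Your version merely spells out the Tonelli interchange and the monotone convergence more explicitly, and uses the marginally sharper constant $(\alpha_1 e)^{-1}$ in place of $\alpha_1^{-1}$; there is no substantive difference.
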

	
	\begin{proof} Observe that (\ref{5.22'}) follows by (\ref{2.9}). Then we only need to verify the inequality (\ref{5.22''}). In fact, for any $x\in (0,\infty)$ and $f \in B(0, \infty)^{+}$, by (\ref{4.6}) we have
		\begin{align*}
			\Pi_{\infty} f(x)=\int_{0}^{\infty} \pi_t f(x) dt
			&\le \int_0^x f(x-t) dt+\beta C_T \|f\| \int_0^{\infty} e^{-\alpha_{1} t} dt \int_{0}^{x} e^{\alpha_{1} s} ds\\
			&= \int_0^x f(x-t) dt+\beta C_T \|f\| \alpha_{1}^{-2} (e^{\alpha_1 x}-1)\\
			&\le \|f\| (\alpha_{1}^{-1}+\beta C_T \alpha_{1}^{-2}) e^{\alpha_1 x}.
		\end{align*}
		Then we obtain (\ref{5.22''}) by letting $C_3 (f)=\|f\| (\alpha_{1}^{-1}+\beta C_T \alpha_{1}^{-2})$.
		$\hfill\square$
	\end{proof}

	\begin{proposition}\label{prop5.23} Suppose that Condition \ref{cond1} holds and let $(\rho_1,\alpha_1)$ be given as in the condition. In addition, assume that
		\begin{align}\label{5.23'}
			\int_{0}^{\infty} e^{2\alpha_1 x} G(dx)<\infty,
			\quad
			\|g''(\cdot,1-)\|<\infty.
		\end{align}
		Let $\Pi_{\infty} f(x)$ be given as in Proposition \ref{prop5.22} and let\begin{small}
			\begin{align}\label{5.23''.}
				a_2(f)= \frac{1}{1-\rho_1}\int_0^{\infty} G(dx)\!\!\int_0^x \alpha(x-s)[g''(x-s,1-)\langle G,\Pi_{\infty} f\rangle ^2 \!+\! g'(x-s,1-)\langle G, (\Pi_{\infty} f)^2\rangle ] d s.
			\end{align}
		\end{small}Then for any $x\in (0,\infty)$ and $f \in B(0, \infty)^{+}$ we have $\Gamma_t f(x) \rightarrow \Gamma_{\infty} f(x)$ as $t\rightarrow\infty$, where\begin{small}
			\begin{align}\label{5.23''}
				\Gamma_{\infty} f(x)=\int_0^x \alpha(x-s)\big\{g''(x-s,1-)\langle G,\Pi_{\infty} f\rangle ^2+g'(x-s,1-)\big[\langle G, (\Pi_{\infty} f)^2\rangle+a_2 (f)\big] \big\} ds.
			\end{align}
		\end{small}Moreover, for any $f \in B(0, \infty)^{+}$ there exists $C_4 (f) \in (0,\infty)$ such that
		\begin{align}\label{5.23'''}
			\Gamma_{\infty}f(x)\le C_4 (f) x,\quad x\in (0,\infty).
		\end{align}
	\end{proposition}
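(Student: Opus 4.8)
The plan is to reduce the statement to a defective renewal equation for the scalar quantity $\gamma_t:=\langle G,\Gamma_t f\rangle$, to solve that equation, and then to substitute the resulting limit back into (\ref{5.20''}). Throughout we write
\[
\kappa_1(s)=\int_s^\infty \alpha(x-s)g'(x-s,1-)\,G(dx),\qquad
\kappa_2(s)=\int_s^\infty \alpha(x-s)g''(x-s,1-)\,G(dx),
\]
both of which are bounded by $\|\alpha\|\,(\|g'(\cdot,1-)\|\vee\|g''(\cdot,1-)\|)\,G(0,\infty)$. We record three facts. By Condition~\ref{cond1}(i), $\int_0^\infty\kappa_1(s)\,ds=\rho_1<1$; since $\int_0^\infty e^{2\alpha_1 x}G(dx)<\infty$ forces $\int_0^\infty x\,G(dx)<\infty$, the assumption $\|g''(\cdot,1-)\|<\infty$ and Tonelli's theorem give $\int_0^\infty\kappa_2(s)\,ds=\int_0^\infty G(dx)\int_0^x\alpha(x-s)g''(x-s,1-)\,ds<\infty$, so $\kappa_1,\kappa_2\in L^1(0,\infty)$. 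Next, by Proposition~\ref{prop5.22} and (\ref{5.22''}) we have $\Pi_{t-s}f(x)\le\Pi_\infty f(x)\le C_3(f)e^{\alpha_1 x}$, so with $\int_0^\infty e^{2\alpha_1 x}G(dx)<\infty$ the numbers $\langle G,\Pi_{t-s}f\rangle$ and $\langle G,(\Pi_{t-s}f)^2\rangle$ are bounded uniformly in $t\ge s\ge 0$. Finally, by the monotone convergence in Proposition~\ref{prop5.22}, $\langle G,\Pi_{t-s}f\rangle\nearrow\langle G,\Pi_\infty f\rangle$ and $\langle G,(\Pi_{t-s}f)^2\rangle\nearrow\langle G,(\Pi_\infty f)^2\rangle$ as $t-s\to\infty$.

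Integrating (\ref{5.20''}) against $G(dx)$ and applying Fubini's theorem gives $\gamma_t=z_t+\int_0^t\gamma_{t-s}\kappa_1(s)\,ds$, where
\[
z_t=\int_0^t\langle G,\Pi_{t-s}f\rangle^2\,\kappa_2(s)\,ds+\int_0^t\langle G,(\Pi_{t-s}f)^2\rangle\,\kappa_1(s)\,ds .
\]
Because the factors $\langle G,\Pi_{t-s}f\rangle^2$ and $\langle G,(\Pi_{t-s}f)^2\rangle$ are uniformly bounded and $\kappa_1,\kappa_2\in L^1$, dominated convergence shows $\sup_t z_t<\infty$ and $z_t\to z_\infty:=\langle G,\Pi_\infty f\rangle^2\int_0^\infty\kappa_2+\langle G,(\Pi_\infty f)^2\rangle\,\rho_1$. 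The measure $\mu_1(ds):=\kappa_1(s)\,ds$ has total mass $\rho_1<1$, so $U:=\sum_{n\ge 0}\mu_1^{*n}$ is a finite Borel measure on $[0,\infty)$ with $\|U\|=(1-\rho_1)^{-1}$, and $U*z$ is the unique bounded solution of the renewal equation; since $\gamma_t$ is locally bounded (by Proposition~\ref{prop5.20} together with a Gronwall estimate using the boundedness of $\kappa_1$), comparison with $U*z$ forces $\gamma_t=\int_{[0,t]}z_{t-s}\,U(ds)$. A final dominated convergence against the finite measure $U$ then yields $\gamma_t\to z_\infty/(1-\rho_1)$, and rewriting $\int_0^\infty\kappa_1=\rho_1$ and $\int_0^\infty\kappa_2=\int_0^\infty G(dx)\int_0^x\alpha(x-s)g''(x-s,1-)\,ds$ by Fubini identifies this limit with the constant $a_2(f)$ of (\ref{5.23''.}).

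It remains to pass to the limit in (\ref{5.20''}) itself. Fix $x>0$; for $t>x$ the two $s$-integrals there run only over $(0,x)$, since $\alpha(x-s)=0$ for $s\ge x$. On $(0,x)$ the integrands are bounded by a constant independent of $t$, while for each $s\in(0,x)$ we have $\langle G,\Pi_{t-s}f\rangle\to\langle G,\Pi_\infty f\rangle$, $\langle G,(\Pi_{t-s}f)^2\rangle\to\langle G,(\Pi_\infty f)^2\rangle$ and $\langle G,\Gamma_{t-s}f\rangle=\gamma_{t-s}\to a_2(f)$; hence dominated convergence gives
\[
\Gamma_t f(x)\longrightarrow\int_0^x\alpha(x-s)\big\{g''(x-s,1-)\langle G,\Pi_\infty f\rangle^2+g'(x-s,1-)\big[\langle G,(\Pi_\infty f)^2\rangle+a_2(f)\big]\big\}\,ds,
\]
which is exactly $\Gamma_\infty f(x)$ in (\ref{5.23''}). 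Bounding the integrand of this last expression pointwise by $C_4(f):=\|\alpha\|\big[\|g''(\cdot,1-)\|\langle G,\Pi_\infty f\rangle^2+\|g'(\cdot,1-)\|\big(\langle G,(\Pi_\infty f)^2\rangle+a_2(f)\big)\big]$, which is finite by (\ref{2.1}), (\ref{5.23'}), $\|\alpha\|<\infty$ and $\Pi_\infty f(x)\le C_3(f)e^{\alpha_1 x}$, we conclude $\Gamma_\infty f(x)\le C_4(f)x$.

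The three dominated-convergence steps are routine; the part needing real care is the renewal bookkeeping: verifying $\int_0^\infty\kappa_2<\infty$ (which is precisely where $\int_0^\infty e^{2\alpha_1 x}G(dx)<\infty$ and $\|g''(\cdot,1-)\|<\infty$ are both used) and checking that the scalar limit $z_\infty/(1-\rho_1)$ coincides with the $a_2(f)$ prescribed in (\ref{5.23''.}).
$\hfill\square$
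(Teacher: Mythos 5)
Your proof is correct and follows essentially the same route as the paper: integrate (\ref{5.20''}) against $G(dx)$ to obtain a defective renewal equation for $\langle G,\Gamma_t f\rangle$, show the forcing term is bounded and convergent, identify the limit as $a_2(f)$, and then pass to the limit in (\ref{5.20''}) by dominated convergence. The only difference is that you prove the defective renewal limit by hand via the renewal measure $U=\sum_{n\ge 0}\mu_1^{*n}$ (which also hands you the uniform bound $\sup_t\langle G,\Gamma_t f\rangle\le \|U\|\sup_t z_t$ directly), whereas the paper cites Jagers \cite[Theorem 5.2.9]{Jagers75} and then derives the uniform bound separately by a two-regime Gronwall argument.
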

	
	\begin{proof}
		Integrating both sides of (\ref{5.20''}) with respect to $G(dx)$ yields
		\begin{align}\label{5.23'''.}
			\langle G,\Gamma_{t} f\rangle
			=&\int_0^{\infty}G(dx)\int_{0}^{t} \alpha(x-s) \big[g''(x-s,1-) \langle G,\Pi_{t-s}f\rangle^{2} + g'(x-s,1-) \langle G,(\Pi_{t-s}f)^{2}\rangle\big] ds \nonumber\\
			&+\int_0^{\infty}G(dx)\int_{0}^{t} \alpha(x-s) g'(x-s,1-) \langle G,\Gamma_{t-s}f\rangle ds\nonumber\\
			=&\int_0^{\infty}G(dx)\int_{0}^{t} \alpha(x-s) \big[g''(x-s,1-) \langle G,\Pi_{t-s}f\rangle^{2} + g'(x-s,1-) \langle G,(\Pi_{t-s}f)^{2}\rangle\big] ds \nonumber\\
			&+\int_{0}^{t} \langle G,\Gamma_{t-s}f\rangle ds \int_s^{\infty}\alpha(x-s) g'(x-s,1-) G(dx).
		\end{align}
		By (\ref{5.22''}) we note that
		\begin{align*}
			\langle G,(\Pi_{\infty}f)^2 \rangle = \int_0^{\infty}(\Pi_{\infty} f(x))^2 G(d x) \le C_3 (f) \int_0^{\infty} e^{2 \alpha_1 x} G(d x).
		\end{align*}
		Then by (\ref{5.23'}) we obtain
		\begin{align*}
			0
			&\le \int_0^{\infty}G(dx)\int_{0}^{t} \alpha(x-s) \big[g''(x-s,1-) \langle G,\Pi_{t-s}f\rangle^{2} + g'(x-s,1-) \langle G,(\Pi_{t-s}f)^{2}\rangle\big] ds\\
			&\le \|\alpha\|\int_0^{\infty}\! x G(dx) \big[\|g''(\cdot,1-)\|\langle G,\Pi_{\infty}f\rangle^{2} + \|g'(\cdot,1-)\| \langle G,(\Pi_{\infty}f)^{2}\rangle\big]\\
			&=:a_3 (f)<\infty.
		\end{align*}
		On the other hand, by monotone convergence we have
		\begin{align*}
			\lim_{t \rightarrow \infty} &\int_0^{\infty}G(dx)\int_{0}^{t} \alpha(x-s) \big[g''(x-s,1-) \langle G,\Pi_{t-s}f\rangle^{2} + g'(x-s,1-) \langle G,(\Pi_{t-s}f)^{2}\rangle\big] ds\\
			=& \int_0^{\infty}G(dx)\int_{0}^{x} \alpha(x-s) \big[g''(x-s,1-) \langle G,\Pi_{\infty}f\rangle^{2} + g'(x-s,1-) \langle G,(\Pi_{\infty}f)^{2}\rangle\big] ds\\
			\le&\ a_3 (f)<\infty.
		\end{align*}
		Then we have
		\begin{align*}
			\lim_{t \rightarrow \infty}\langle G,\Gamma_{t}f \rangle
			= a_2(f) \in (0,\infty),\quad f\in B(0,\infty)^+,
		\end{align*}
		where $a_2(f)$ is defined by (\ref{5.23''.}); see, e.g., Jagers \cite[Theorem 5.2.9]{Jagers75}. Then there exists $T'>0$ and $C_{5}\in(0,\infty)$ such that for $f\in B(0,\infty)^{+}$,
		\begin{align*}
			\langle G,\Gamma_{t} f\rangle \le a_2 (f) C_{5},\quad t>T'.
		\end{align*}
		On the other hand, by (\ref{5.23'''.}) it is easy to see
		\begin{align*}
			\langle G,\Gamma_{t} f\rangle \le a_3 (f) + \beta \int_0^t \langle G,\Gamma_{s} f\rangle ds.
		\end{align*}
		where $\beta=\|\alpha g'(\cdot,1-)\|$. Then Gronwall's inequality implies
		\begin{align*}
			\langle G,\Gamma_{t} f\rangle
			&\le a_3 (f) + \beta\ a_3 (f) \int_0^t e^{\beta(t-s)} ds\\
			&=a_3 (f) e^{\beta t}\\
			&\le a_3 (f) e^{\beta T'},\quad t\in [0,T'].
		\end{align*}
		Then for any $f\in B(0,\infty)^+$ we have
		\begin{align*}
			\langle G,\Gamma_{t} f\rangle\le max\big\{a_2 (f) C_{5}, a_3 (f) e^{\beta T'}\big\}=:C_{T'}(f)\in (0,\infty),\quad t\ge 0.
		\end{align*}
		Then letting $t\rightarrow\infty$ in (\ref{5.20''}) we obtain (\ref{5.23''}) by monotone convergence and dominated convergence. Moreover, for any $x\in (0,\infty)$ and $f \in B(0, \infty)^{+}$, by (\ref{5.23''}) we have
		\begin{align*}
			\Gamma_{\infty} f(x)\le x\cdot \|\alpha\| \big[\|g''(\cdot,1-)\|\langle G,\Pi_{\infty}f\rangle^{2} + \|g'(\cdot,1-)\| \big(\langle G,(\Pi_{\infty}f)^{2}\rangle + C_{T'}(f)\big)\big]
		\end{align*}
		Then by letting $C_4 (f)=\|\alpha\| \big[\|g''(\cdot,1-)\|\langle G,\Pi_{\infty}f\rangle^{2} + \|g'(\cdot,1-)\| \big(\langle G,(\Pi_{\infty}f)^{2}\rangle + C_{T'}(f)\big)\big]$ we naturally obtain (\ref{5.23'''}).
		$\hfill\square$
	\end{proof}
	
	Now we discuss the longtime behavior of the occupation time process $(Z_t: t \ge 0)$. The proofs are based on the moment estimates given in Proposition \ref{prop5.22} and \ref{prop5.23}.

	\begin{theorem}\label{th5.1} Supposed that Condition \ref{cond1} and the properties (\ref{4.4}) and (\ref{5.23'}) hold. Then for any $\sigma \in \mathfrak{N}(0, \infty)$ and $f \in B(0, \infty)$ we have, as $t\rightarrow\infty$,
		\begin{align*}
			\frac{1}{t}\langle Z_{t},f\rangle\rightarrow \int_{\mathfrak{N}(0,\infty)^{\circ}}\langle \nu, \Pi_{\infty}f\rangle L(d \nu)
		\end{align*}
		by convergence in probability with respect to $\mathbf{P}_{\sigma}$, where $\Pi_{\infty}f(x)$ is given as in Proposition \ref{prop5.22}. \end{theorem}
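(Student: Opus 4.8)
The plan is to show that the mean converges and that the variance is $o(t^{2})$, and then conclude by Chebyshev's inequality. Since convergence in probability to a constant is preserved under linear combinations, writing $f=f^{+}-f^{-}$ reduces everything to $f\in B(0,\infty)^{+}$, which I assume from now on. Proposition~\ref{prop5.19'} gives
\begin{align*}
	\mathbf{P}_{\sigma}[\langle Z_{t},f\rangle]=\langle\sigma,\Pi_{t}f\rangle+\int_{\mathfrak{N}(0,\infty)^{\circ}}L(d\nu)\int_{0}^{t}\langle\nu,\Pi_{s}f\rangle\,ds.
\end{align*}
By Proposition~\ref{prop5.22}, $\Pi_{t}f(x)\nearrow\Pi_{\infty}f(x)<\infty$, and since $\sigma$ has finite support, $t^{-1}\langle\sigma,\Pi_{t}f\rangle\to0$. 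For the second term, $\langle\nu,\Pi_{s}f\rangle\nearrow\langle\nu,\Pi_{\infty}f\rangle$ gives $t^{-1}\int_{0}^{t}\langle\nu,\Pi_{s}f\rangle\,ds\to\langle\nu,\Pi_{\infty}f\rangle$ for each $\nu$ as a Ces\`{a}ro limit, and the bound $\Pi_{\infty}f(x)\le C_{3}(f)e^{\alpha_{1}x}$ from (\ref{5.22''}) combined with hypothesis (\ref{4.4}) provides an $L$-integrable majorant; dominated convergence then yields $t^{-1}\mathbf{P}_{\sigma}[\langle Z_{t},f\rangle]\to\int_{\mathfrak{N}(0,\infty)^{\circ}}\langle\nu,\Pi_{\infty}f\rangle L(d\nu)=:c$, which is the claimed limit.

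Propositions~\ref{prop5.19'} and~\ref{prop5.20'}, after cancellation of the squared mean, give
\begin{align*}
	\mathrm{Var}_{\sigma}(\langle Z_{t},f\rangle)=\langle\sigma,\Gamma_{t}f\rangle+\int_{\mathfrak{N}(0,\infty)^{\circ}}L(d\nu)\int_{0}^{t}\big[\langle\nu,\Gamma_{s}f\rangle+\langle\nu,\Pi_{s}f\rangle^{2}\big]\,ds,
\end{align*}
and I must check that each piece is $o(t^{2})$. Because $\alpha(x-s)=0$ for $s\ge x$, equation (\ref{5.20''}) together with the uniform estimate $\langle G,\Gamma_{s}f\rangle\le C_{T'}(f)$ established in the proof of Proposition~\ref{prop5.23} and the inequalities $\langle G,\Pi_{s}f\rangle\le\langle G,\Pi_{\infty}f\rangle$, $\langle G,(\Pi_{s}f)^{2}\rangle\le\langle G,(\Pi_{\infty}f)^{2}\rangle$ (both finite by (\ref{5.23'})) yields $\Gamma_{s}f(x)\le C_{4}(f)x$ uniformly in $s\ge0$. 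Hence $t^{-2}\langle\sigma,\Gamma_{t}f\rangle\to0$ by finiteness of $\sigma$, and, using $x\le(\alpha_{1}e)^{-1}e^{\alpha_{1}x}$ with (\ref{4.4}), $t^{-2}\int_{\mathfrak{N}(0,\infty)^{\circ}}L(d\nu)\int_{0}^{t}\langle\nu,\Gamma_{s}f\rangle\,ds=O(t^{-1})$. The surviving term $t^{-2}\int_{\mathfrak{N}(0,\infty)^{\circ}}L(d\nu)\int_{0}^{t}\langle\nu,\Pi_{s}f\rangle^{2}\,ds$ is the delicate one: the crude bound $\langle\nu,\Pi_{s}f\rangle\le\langle\nu,\Pi_{\infty}f\rangle$ only leaves $t^{-1}\int_{\mathfrak{N}(0,\infty)^{\circ}}\langle\nu,\Pi_{\infty}f\rangle^{2}L(d\nu)$, whose coefficient need not be finite since (\ref{4.4}) controls only the first $L$-moment of $\nu\mapsto\langle\nu,\Pi_{\infty}f\rangle$.

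To get around this I would truncate the immigration. For $K>0$ put $A_{K}=\{\nu:\int_{0}^{\infty}e^{\alpha_{1}x}\nu(dx)\le K\}$ and $\psi_{K}(h)=\int_{A_{K}}(1-e^{-\langle\nu,h\rangle})L(d\nu)$; by the branching property and the infinitely-divisible form of the semigroup (\ref{4.1}), $\langle Z_{t},f\rangle$ has the same law as $\langle Z_{t}^{K},f\rangle+\langle Z_{t}^{K,c},f\rangle$, where $Z^{K}$ and $Z^{K,c}$ are the occupation times of two independent processes, an $(\alpha,g,G,\psi_{K})$-process from $\sigma$ and an $(\alpha,g,G,\psi-\psi_{K})$-process from $\mathbf{0}$; since convergence in probability to a constant depends only on one-dimensional laws, we may argue with this coupled version. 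On $A_{K}$ we have $\langle\nu,\Pi_{\infty}f\rangle\le C_{3}(f)K$, so $\int_{A_{K}}\langle\nu,\Pi_{\infty}f\rangle^{2}L(d\nu)\le(C_{3}(f)K)^{2}L(\mathfrak{N}(0,\infty)^{\circ})<\infty$ and the variance estimates above apply to $Z^{K}$, giving $t^{-1}\langle Z_{t}^{K},f\rangle\to c_{K}:=\int_{A_{K}}\langle\nu,\Pi_{\infty}f\rangle L(d\nu)$ in probability; for $Z^{K,c}$ only the first moment is needed, $t^{-1}\mathbf{P}_{\sigma}[\langle Z_{t}^{K,c},f\rangle]\to c-c_{K}$, and $c-c_{K}\to0$ as $K\to\infty$ by (\ref{4.4}). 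Given $\varepsilon,\delta>0$, choosing $K$ with $c-c_{K}<\varepsilon$ and $(c-c_{K})/\varepsilon<\delta$, then applying Chebyshev's inequality to $Z^{K}$ and Markov's inequality to the nonnegative $\langle Z_{t}^{K,c},f\rangle$, one obtains $\limsup_{t\to\infty}\mathbf{P}_{\sigma}(|t^{-1}\langle Z_{t},f\rangle-c|>3\varepsilon)\le\delta$; letting $\delta\downarrow0$ and then $\varepsilon\downarrow0$ finishes the $f\in B(0,\infty)^{+}$ case, and the general case follows from $f=f^{+}-f^{-}$. I expect this squared immigration-mean term to be the one real obstacle: hypothesis (\ref{4.4}) is not strong enough to bound it head-on, and it is the branching/infinite-divisibility splitting of $L$ — rather than a brute-force estimate — that makes the proof go through.
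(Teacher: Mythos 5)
Your proof is correct, but it takes a genuinely different route from the paper. The paper works directly with the Laplace transform: by Corollary~\ref{cor5.18'}, $\mathbf{P}_{\sigma}\exp\{-t^{-1}\langle Z_t,f\rangle\}=\exp\{-\langle\sigma,v_tf_t\rangle-\int_0^t\psi(v_sf_t)\,ds\}$ with $f_t=f/t$; the term $\langle\sigma,v_tf_t\rangle$ vanishes by the bound $v_tf_t\le t^{-1}\Pi_\infty f$, a Taylor expansion in the small parameter $1/t$ gives $tv_{st}f_t(x)\to\Pi_\infty f(x)$, and since $t(1-e^{-a/t})\le a$ the hypothesis (\ref{4.4}) supplies the dominating function needed to pass to the limit in $\int_0^t\psi(v_sf_t)\,ds\to\int\langle\nu,\Pi_\infty f\rangle L(d\nu)$; convergence of the Laplace transform to that of a constant then yields convergence in probability, with no second moment of $\nu\mapsto\langle\nu,\Pi_\infty f\rangle$ under $L$ ever entering. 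You instead run a Chebyshev argument, and your diagnosis of where it breaks is exactly right: the variance formula of Proposition~\ref{prop5.20'} contains $\int L(d\nu)\int_0^t\langle\nu,\Pi_sf\rangle^2ds$, which (\ref{4.4}) cannot control — this is precisely why the paper reserves (\ref{4.4''}) for Theorems~\ref{th5.2} and~\ref{th5.3}. Your fix, splitting $L=L1_{A_K}+L1_{A_K^c}$ and using the factorization of the Laplace functional in Corollary~\ref{cor5.18'} to realize $\langle Z_t,f\rangle$ in law as an independent sum, then Chebyshev on the truncated part and Markov on the tail, is sound: the uniform-in-$s$ bound $\Gamma_sf(x)\le C_4(f)x$ does follow from the proof of Proposition~\ref{prop5.23} as you claim, $L$ is a finite measure so the truncated second moment is finite, and the $\varepsilon$--$\delta$ bookkeeping at the end closes correctly. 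What the paper's approach buys is brevity and the avoidance of any second-moment consideration of $L$; what yours buys is a more elementary, purely moment-based argument that makes explicit why the first-moment hypothesis (\ref{4.4}) suffices, at the cost of the extra decomposition step.
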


	\begin{proof} It suffices to consider $f\in B(0, \infty)^{+}$. For the convenience of statement of the proof, we write $f_t = f/t$ for $t\ge 0$. By (\ref{5.31''.}) we have
		\begin{align}\label{5.33}
			\mathbf{P}_{\sigma} \exp \Big\{\!- \frac{1}{t} \langle Z_t,f\rangle \Big\}=\exp \Big\{\!-\langle \sigma,v_{t}f_t\rangle -\int_{0}^{t}\psi(v_{s}f_t)ds\Big\}.
		\end{align}
		By Jensen's inequality and (\ref{5.22''}) we see, as $t\rightarrow\infty$,
		\begin{align*}
			v_t f_t(x)\le \Pi_t f_t(x)=\frac{1}{t}\Pi_t f(x)\le \frac{1}{t}\Pi_{\infty} f(x)\le \frac{1}{t}C_3 (f) e^{\alpha_1 x}\rightarrow 0.
		\end{align*}
		Then $\langle \sigma,v_{t}f_t\rangle$ converges as $t\rightarrow\infty$ to $0$ for any $\sigma \in \mathfrak{N}(0, \infty)$ and $f \in B(0, \infty)^{+}$ by dominated convergence. On the other hand, for any $0< s\le 1$ and $x\in(0,\infty)$, by Proposition \ref{prop5.22}, Proposition \ref{prop5.23} and Taylor's expansion we have, as $t\rightarrow\infty$,
		\begin{align*}
			tv_{st}f_t (x)
			&=tv_{st}(t^{-1}f) (x)\\
			&=\Pi_{st}f(x) -\frac{1}{2t}\Gamma_{st}f(x) + o\Big(\frac{1}{t^2}\Big)\\
			&\nearrow \Pi_{\infty} f(x).
		\end{align*}
		Then by (\ref{4.4}) we obtain, as $t\rightarrow\infty$,
		\begin{align*}
			\int_{0}^{t}\psi(v_{s}f_t)ds
			&=\int_{\mathfrak{N}(0,\infty)^{\circ}} L(d\nu) \int_0^t\big[1-e^{-\langle \nu, v_s f_t\rangle}\big] d s\\
			&=\int_0^1 ds \int_{\mathfrak{N}(0,\infty)^{\circ}} t\big[1-e^{-t^{-1}\langle \nu, t v_{st} f_t\rangle}\big] L(d\nu)\\
			&\nearrow \int_0^1 ds \int_{\mathfrak{N}(0,\infty)^{\circ}} \langle \nu, \Pi_{\infty}f \rangle L(d\nu)\\
			&=\int_{\mathfrak{N}(0,\infty)^{\circ}} \langle \nu, \Pi_{\infty}f\rangle L(d\nu)<\infty.
		\end{align*}
		By letting $t\rightarrow\infty$ in (\ref{5.33}) we obtain the desired result. $\hfill\square$
	\end{proof}

	\begin{theorem}\label{th5.2} Supposed that Condition \ref{cond1} and the property (\ref{5.23'}) hold. Let $\alpha_{1}$ be given as in Condition \ref{cond1} and assume further that
		\begin{align}\label{4.4''}
			\int_{\mathfrak{N}(0,\infty)} L(d \nu) \Big(\int_{0}^{\infty} e^{\alpha_{1} x} \nu(dx)\Big)^2 <\infty.
		\end{align}
		Then for any $\sigma \in \mathfrak{N}(0, \infty)$ and $f \in B(0, \infty)$ we have, almost surely with respect to $\mathbf{P}_{\sigma}$,
		\begin{align}\label{4.25}
			\frac{1}{t}\langle Z_{t},f\rangle\rightarrow \int_{\mathfrak{N}(0,\infty)^{\circ}}\langle \nu, \Pi_{\infty}f\rangle L(d \nu),\quad \text{as}\ t\rightarrow\infty,
		\end{align}
		where $\Pi_{\infty}f(x)$ is given as in Proposition \ref{prop5.22}. \end{theorem}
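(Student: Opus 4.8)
The plan is to upgrade the convergence in probability of Theorem~\ref{th5.1} to an almost-sure statement along the classical route: a first-moment identity, a variance bound of order $O(t)$, a Borel--Cantelli argument along the subsequence $t=k^2$, and a monotonicity sandwich to fill in the gaps. Since $\Pi_{\infty}$ is linear, it suffices to prove $(\ref{4.25})$ for $f\in B(0,\infty)^{+}$ and then treat a general $f$ through $f=f^{+}-f^{-}$. Note first that, as $L$ is finite, the Cauchy--Schwarz inequality shows $(\ref{4.4''})$ implies $(\ref{4.4})$, so in particular $\int_{\mathfrak{N}(0,\infty)^{\circ}}\langle\nu,\Pi_{\infty}f\rangle L(d\nu)<\infty$ by $(\ref{5.22''})$; write $m(f)$ for this quantity.

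For the first moment, Proposition~\ref{prop5.19'} gives
\begin{align*}
\mathbf{P}_{\sigma}\Big[\tfrac{1}{t}\langle Z_t,f\rangle\Big]=\tfrac{1}{t}\langle\sigma,\Pi_t f\rangle+\tfrac{1}{t}\int_{\mathfrak{N}(0,\infty)^{\circ}}L(d\nu)\int_0^t\langle\nu,\Pi_s f\rangle\,ds.
\end{align*}
Since $\Pi_s f(x)\nearrow\Pi_{\infty}f(x)$ by Proposition~\ref{prop5.22}, the numerator $\langle\sigma,\Pi_t f\rangle$ is bounded in $t$, so the first term tends to $0$, while $\tfrac{1}{t}\int_0^t\langle\nu,\Pi_s f\rangle\,ds\to\langle\nu,\Pi_{\infty}f\rangle$ by Ces\`aro; as the integrand is dominated by $\langle\nu,\Pi_{\infty}f\rangle\le C_3(f)\int_0^{\infty}e^{\alpha_1 x}\nu(dx)\in L^1(L)$, Fubini and dominated convergence yield $\mathbf{P}_{\sigma}[\tfrac{1}{t}\langle Z_t,f\rangle]\to m(f)$.

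For the variance, Propositions~\ref{prop5.19'} and~\ref{prop5.20'} give
\begin{align*}
\mathbf{P}_{\sigma}\big[\langle Z_t,f\rangle^2\big]-\big(\mathbf{P}_{\sigma}[\langle Z_t,f\rangle]\big)^2=\langle\sigma,\Gamma_t f\rangle+\int_{\mathfrak{N}(0,\infty)^{\circ}}L(d\nu)\int_0^t\big[\langle\nu,\Gamma_s f\rangle+\langle\nu,\Pi_s f\rangle^2\big]ds.
\end{align*}
Running the argument of the proof of Proposition~\ref{prop5.23} with $\Gamma_t$ in place of $\Gamma_{\infty}$ (in particular using the uniform bound $\langle G,\Gamma_s f\rangle\le C_{T'}(f)$ obtained there, together with $\alpha(x-s)=0$ for $s\ge x$) produces a constant $C_6(f)$, independent of $s$, with $\Gamma_s f(x)\le C_6(f)x$ for all $s\ge 0$, $x>0$. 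Hence $\langle\sigma,\Gamma_t f\rangle\le C_6(f)\int_{(0,\infty)}x\,\sigma(dx)$ is bounded in $t$, and $\int L(d\nu)\int_0^t\langle\nu,\Gamma_s f\rangle\,ds\le C_6(f)\,t\int L(d\nu)\int_0^{\infty}x\,\nu(dx)$. Moreover $\langle\nu,\Pi_s f\rangle^2\le C_3(f)^2\big(\int_0^{\infty}e^{\alpha_1 x}\nu(dx)\big)^2$, which is $L$-integrable by $(\ref{4.4''})$, and since $x\le Ce^{\alpha_1 x}$ the $L$-integral $\int L(d\nu)\int_0^{\infty}x\,\nu(dx)$ is finite by $(\ref{4.4})$. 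Therefore all three terms are $O(t)$, whence $\mathbf{P}_{\sigma}[\tfrac{1}{t}\langle Z_t,f\rangle^2]-(\mathbf{P}_{\sigma}[\tfrac{1}{t}\langle Z_t,f\rangle])^2=O(1/t)$. This uniform control of the second moments is the main obstacle: it is exactly where the square-exponential moment hypothesis $(\ref{4.4''})$ and the conditions $(\ref{5.23'})$ are essential; once it is in hand, the rest is routine.

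Finally, set $b_n=\tfrac{1}{n}\langle Z_n,f\rangle$ for integers $n\ge 1$. By the previous two steps and Chebyshev's inequality, for each $\epsilon>0$ the series $\sum_k\mathbf{P}_{\sigma}\big(|b_{k^2}-\mathbf{P}_{\sigma}[b_{k^2}]|>\epsilon\big)$ is dominated by $C\epsilon^{-2}\sum_k k^{-2}<\infty$, so Borel--Cantelli gives $b_{k^2}\to m(f)$ a.s. Since $f\ge 0$, the map $t\mapsto\langle Z_t,f\rangle$ is non-decreasing, so for $k^2\le n<(k+1)^2$ we have $\tfrac{k^2}{n}b_{k^2}\le b_n\le\tfrac{(k+1)^2}{n}b_{(k+1)^2}$; letting $n\to\infty$ (hence $k\to\infty$) sandwiches $b_n\to m(f)$ a.s. along integers, and a further sandwich between $\lfloor t\rfloor$ and $\lfloor t\rfloor+1$, again by monotonicity, yields $\tfrac{1}{t}\langle Z_t,f\rangle\to m(f)$ a.s. as $t\to\infty$ through real values. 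Applying this to $f^{+}$ and $f^{-}$ and subtracting gives $(\ref{4.25})$ for arbitrary $f\in B(0,\infty)$.
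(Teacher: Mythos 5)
Your proof is correct and follows essentially the same route as the paper's: the first- and second-moment formulas of Propositions \ref{prop5.19'} and \ref{prop5.20'}, the uniform bounds $\Pi_s f(x)\le C_3(f)e^{\alpha_1 x}$ and $\Gamma_s f(x)\le Cx$ from Propositions \ref{prop5.22} and \ref{prop5.23} to get $\mathrm{Var}[\langle Z_t,f\rangle]=O(t)$, then Chebyshev with Borel--Cantelli along a sparse subsequence and a monotonicity sandwich. The only (cosmetic) difference is your choice of the subsequence $t=k^2$, whose consecutive ratios tend to $1$ and so avoid the paper's final step of taking the geometric subsequence $a^n$ and letting $a\to 1$; your explicit derivation of the $s$-uniform bound $\Gamma_s f(x)\le C_6(f)x$ is in fact slightly more careful than the paper's citation of Proposition \ref{prop5.23}, which formally states the bound only for $\Gamma_\infty f$.
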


	\begin{proof}  It suffices to consider $f\in B(0, \infty)^{+}$. By Propositions \ref{prop5.19'} and \ref{prop5.20'} we have \begin{small}
			\begin{align*}
				\text{Var}[\langle Z_t,f\rangle]
				&:=
				\mathbf{P}_{\sigma}[\langle Z_t,f\rangle^2 ]-\mathbf{P}_{\sigma}[\langle Z_t,f\rangle ]^2\\
				&=
				\langle\sigma, \Gamma_{t} f\rangle+\int_{\mathfrak{N}(0,\infty)^{\circ}} L(d \nu) \int_0^t \langle \nu,\Gamma_s f \rangle+\langle \nu,\Pi_s f \rangle^2 ds\\
				&\le
				C_4 (f) \int_{0}^{\infty} x \sigma(dx)+t \int_{\mathfrak{N}(0,\infty)^{\circ}} L(d \nu) \Big[C_4 (f) \int_0^{\infty}x \nu(dx) +C_3 (f)^2\Big(\int_0^{\infty}e^{\alpha_1 x} \nu(dx)\Big)^2\Big]\\
				&=
				C_6 (f) +C_7 (f) t,
			\end{align*}
		\end{small}where the first inequality follows by Propositions \ref{prop5.22} and \ref{prop5.23},
		\begin{align*}
			C_6 (f) :=C_4 (f) \int_{0}^{\infty} x \sigma(dx)<\infty
		\end{align*}
		and
		\begin{align*}
			C_7 (f):=\int_{\mathfrak{N}(0,\infty)^{\circ}} L(d \nu) \Big[C_4 (f) \int_0^{\infty}x \nu(dx) +C_3 (f)^2 \Big(\int_{0}^{\infty} e^{\alpha_{1} x} \nu(dx)\Big)^2\Big]<\infty
		\end{align*}
		by (\ref{4.4''}). Let $a\in (1,\infty)$ and $k_n=a^{n}, n\in \mathbb{N}\backslash 0$. For any $\sigma \in \mathfrak{N}(0, \infty)$, $f \in B(0, \infty)^{+}$ and $\epsilon>0$ by Chebyshev's inequality we have
		\begin{align*}
			\sum_{n=1}^{\infty} \mathbf{P}_{\sigma}\Big(\frac{|\langle Z_{k_n},f \rangle-\mathbf{P}_{\sigma}[\langle Z_{k_n},f \rangle]|}{k_n}>\epsilon\Big)
			&\le \frac{1}{\epsilon^2}\sum_{n=1}^{\infty} \frac{1}{k_n^2} \text{Var}[\langle Z_{k_n},f\rangle]\\
			&\le \frac{1}{\epsilon^2}\sum_{n=1}^{\infty} \Big(\frac{1}{k_n^2}C_6 (f) +\frac{1}{k_n}C_7 (f) \Big) \\
			&<\infty.
		\end{align*}
		Notice that $\int_{\mathfrak{N}(0,\infty)^{\circ}}\langle \nu, \Pi_{\infty}f\rangle L(d \nu)<\infty$ for $f \in B(0, \infty)^{+}$ by (\ref{4.4}). By Theorem \ref{th4.2} it is easy to see that
		\begin{small}
			\begin{align*}
				\lim_{n\rightarrow\infty}k_n^{-1} \mathbf{P}_{\sigma}[\langle Z_{k_n},f \rangle]-\!\int_{\mathfrak{N}(0,\infty)^{\circ}}\!\langle \nu, \Pi_{\infty}f\rangle L(d \nu)
				&=\lim_{n\rightarrow\infty}k_n^{-1} \int_{0}^{k_n}\mathbf{P}_{\sigma}[\langle Y_{s},f \rangle] ds-\!\int_{\mathfrak{N}(0,\infty)^{\circ}}\!\langle \nu, \Pi_{\infty}f\rangle L(d \nu)\\
				&=\lim_{n\rightarrow\infty}\mathbf{P}_{\sigma}[\langle Y_{k_n},f \rangle]-\!\int_{\mathfrak{N}(0,\infty)^{\circ}}\!\langle \nu, \Pi_{\infty}f\rangle L(d \nu)\\
				&=0.
			\end{align*}
		\end{small}Therefore by Borel-Cantelli Lemma,
		\begin{align*}
			\frac{\langle Z_{k_n},f\rangle}{k_n}\rightarrow\int_{\mathfrak{N}(0,\infty)^{\circ}}\langle \nu, \Pi_{\infty}f\rangle L(d \nu),\quad \text{as}\ n\rightarrow\infty,
		\end{align*}
		almost surely with respect to $\mathbf{P}_{\sigma}$. For fixed $t \ge 1$, there exists $n$ such that $a^n \le t<a^{n+1}$ and
		\begin{align*}
			\frac{\langle Z_{a^n}, f\rangle}{a^{n+1}}<\frac{\langle Z_t, f\rangle}{t}<\frac{\langle Z_{a^{n+1}}, f\rangle}{a^n}.
		\end{align*}
		Then for any $a>1$,
		\begin{align*}
			\frac{1}{a} \int_{\mathfrak{N}(0,\infty)^{\circ}}\langle \nu, \Pi_{\infty}f\rangle L(d \nu) \le \liminf_{t \rightarrow \infty} \frac{\langle Z_t, f\rangle}{t} \le \limsup_{t \rightarrow \infty} \frac{\langle Z_t, f\rangle}{t} \le a \int_{\mathfrak{N}(0,\infty)^{\circ}}\langle \nu, \Pi_{\infty}f\rangle L(d \nu).
		\end{align*}
		By letting $a\rightarrow 1$ we obtain the desired result. $\hfill\square$ \end{proof}

	\begin{theorem}\label{th5.3} Supposed that Condition \ref{cond1} and the properties (\ref{5.23'}) and (\ref{4.4''}) hold. Then for any $\sigma \in \mathfrak{N}(0, \infty)$ and $f \in B(0, \infty)$, the distribution of
		\begin{align*}
			\frac{1}{\sqrt{t}}\big(\langle Z_{t},f\rangle-\mathbf{P}_{\sigma}\langle Z_{t},f\rangle\big)
		\end{align*}
		under $\mathbf{P}_{\sigma}$ converges as $t\rightarrow\infty$ to the Gaussian distribution with mean zero and variance
		\begin{align*}
			\int_{\mathfrak{N}(0,\infty)^{\circ}} \big(\langle \nu,\Gamma_{\infty} f \rangle+\langle \nu ,\Pi_{\infty} f \rangle^2\big) L(d \nu),
		\end{align*}
		where $\Pi_{\infty}f(x)$ and $\Gamma_{\infty} f(x)$ are given as in Proposition \ref{prop5.22} and \ref{prop5.23}, respectively. \end{theorem}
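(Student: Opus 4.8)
The plan is to compute the Laplace transform of the centered and rescaled occupation functional and to match it, as $t\to\infty$, with the Laplace transform $e^{\lambda^{2}V/2}$ of the centered Gaussian law, where $V:=\int_{\mathfrak{N}(0,\infty)^{\circ}}\big(\langle\nu,\Gamma_{\infty}f\rangle+\langle\nu,\Pi_{\infty}f\rangle^{2}\big)L(d\nu)$. First note $V<\infty$: since $e^{\alpha_{1}x}\ge1$ on $(0,\infty)$, hypothesis (\ref{4.4''}) gives $\int_{\mathfrak{N}(0,\infty)^{\circ}}L(d\nu)\int_{0}^{\infty}e^{\alpha_{1}x}\nu(dx)<\infty$, and since $x\le\alpha_{1}^{-1}(e^{\alpha_{1}x}-1)$ also $\int_{\mathfrak{N}(0,\infty)^{\circ}}L(d\nu)\int_{0}^{\infty}x\,\nu(dx)<\infty$, so the bounds (\ref{5.22''}) and (\ref{5.23'''}) make $V$ finite. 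As in Theorems \ref{th5.1} and \ref{th5.2} it suffices to treat $f\in B(0,\infty)^{+}$; the general $f\in B(0,\infty)$ then follows by running the same computation for the two-parameter family $\lambda_{1}f^{+}+\lambda_{2}f^{-}$ (again in $B(0,\infty)^{+}$), obtaining a bivariate centered Gaussian limit and polarizing.

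Fix $\lambda\ge0$, set $\theta_{t}=\lambda/\sqrt t$ and $f_{t}=\theta_{t}f$. By Corollary \ref{cor5.18'},
\[
\mathbf{P}_{\sigma}\exp\Big\{-\tfrac{\lambda}{\sqrt t}\langle Z_{t},f\rangle\Big\}=\exp\Big\{-\langle\sigma,v_{t}f_{t}\rangle-\int_{0}^{t}\psi(v_{s}f_{t})\,ds\Big\},
\]
so, after multiplication by $\exp\{\theta_{t}\mathbf{P}_{\sigma}\langle Z_{t},f\rangle\}$, the task is to find the limit of $E_{t}:=-\langle\sigma,v_{t}f_{t}\rangle-\int_{0}^{t}\psi(v_{s}f_{t})\,ds+\theta_{t}\mathbf{P}_{\sigma}\langle Z_{t},f\rangle$. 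Recall from the remarks after Proposition \ref{prop5.20'} that $\Pi_{s}f=\partial_{\theta}v_{s}(\theta f)\big|_{\theta=0}$ and $\Gamma_{s}f=-\partial_{\theta}^{2}v_{s}(\theta f)\big|_{\theta=0}$. Combining these with the uniform bounds $\Pi_{s}f(x)\le\Pi_{\infty}f(x)\le C_{3}(f)e^{\alpha_{1}x}$ and $\Gamma_{s}f(x)\le C_{4}(f)x$ (from Propositions \ref{prop5.22}, \ref{prop5.23}, the latter uniform in $s$ via the bound on $\langle G,\Gamma_{s}f\rangle$ proved there) and a third-order moment-type estimate $|\partial_{\theta}^{3}v_{s}(\theta f)(x)|\le C_{5}(f)e^{2\alpha_{1}x}$ for $\theta$ near $0$, uniform in $s$ (established exactly as Propositions \ref{prop5.22}, \ref{prop5.23}), Taylor expansion gives $v_{s}(\theta f)(x)=\theta\Pi_{s}f(x)-\tfrac{\theta^{2}}{2}\Gamma_{s}f(x)+O(\theta^{3}e^{2\alpha_{1}x})$ uniformly in $s\ge0$. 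Inserting this together with the elementary inequality $0\le1-e^{-y}-y+\tfrac12 y^{2}\le\min(\tfrac12 y^{2},\tfrac16 y^{3})$ yields
\[
\psi(v_{s}f_{t})=\theta_{t}\!\!\int_{\mathfrak{N}(0,\infty)^{\circ}}\!\!\langle\nu,\Pi_{s}f\rangle\,L(d\nu)-\frac{\theta_{t}^{2}}{2}\!\!\int_{\mathfrak{N}(0,\infty)^{\circ}}\!\!\big(\langle\nu,\Gamma_{s}f\rangle+\langle\nu,\Pi_{s}f\rangle^{2}\big)L(d\nu)+\varepsilon_{s}(t),
\]
where, since $\langle\nu,e^{2\alpha_{1}\cdot}\rangle\le\langle\nu,e^{\alpha_{1}\cdot}\rangle^{2}$ for integer-valued $\nu$, hypothesis (\ref{4.4''}) provides $L(d\nu)$-integrable, $s$-uniform dominations yielding $\int_{0}^{t}|\varepsilon_{s}(t)|\,ds=O(\theta_{t}^{3}t)=O(t^{-1/2})$ after a truncation in $\langle\nu,\Pi_{\infty}f\rangle$.

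Integrate over $[0,t]$. By Proposition \ref{prop5.19'} the $\theta_{t}$-linear part of $\int_{0}^{t}\psi(v_{s}f_{t})\,ds$ equals $\theta_{t}\int_{\mathfrak{N}(0,\infty)^{\circ}}L(d\nu)\int_{0}^{t}\langle\nu,\Pi_{s}f\rangle\,ds$, which cancels $\theta_{t}\mathbf{P}_{\sigma}\langle Z_{t},f\rangle$ up to the residue $\theta_{t}\langle\sigma,\Pi_{t}f\rangle$; since $\Pi_{t}f\nearrow\Pi_{\infty}f$ and $\sigma$ is finite, $\theta_{t}\langle\sigma,\Pi_{t}f\rangle\to0$, and likewise $\langle\sigma,v_{t}f_{t}\rangle\le\theta_{t}\langle\sigma,\Pi_{t}f\rangle\to0$. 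The $\theta_{t}^{2}$-part equals $\tfrac{\lambda^{2}}{2}\cdot\tfrac1t\int_{0}^{t}\Phi(s)\,ds$ with $\Phi(s):=\int_{\mathfrak{N}(0,\infty)^{\circ}}(\langle\nu,\Gamma_{s}f\rangle+\langle\nu,\Pi_{s}f\rangle^{2})L(d\nu)$, and $\Phi(s)\to V$ as $s\to\infty$ by Propositions \ref{prop5.22}, \ref{prop5.23} and dominated convergence in $L(d\nu)$ (with the dominations used in the proof of Theorem \ref{th5.2}), so its Ces\`{a}ro average converges to $V$. Together with $\int_{0}^{t}|\varepsilon_{s}(t)|\,ds\to0$ this gives $E_{t}\to\tfrac{\lambda^{2}}{2}V$, hence $\mathbf{P}_{\sigma}\exp\{-\lambda t^{-1/2}(\langle Z_{t},f\rangle-\mathbf{P}_{\sigma}\langle Z_{t},f\rangle)\}\to e^{\lambda^{2}V/2}$ for all $\lambda\ge0$. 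Since by Proposition \ref{prop5.20'} one has $\text{Var}[\langle Z_{t},f\rangle]\le C_{6}(f)+C_{7}(f)t$ (as in the proof of Theorem \ref{th5.2}), the rescaled laws have bounded second moments, hence are tight, and a standard argument upgrades this convergence of Laplace transforms on $[0,\infty)$ to convergence in distribution to $N(0,V)$.

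The main obstacle is the uniform control of the expansion of $v_{s}(\theta f)$: besides Propositions \ref{prop5.22} and \ref{prop5.23} one needs a third-order estimate $|\partial_{\theta}^{3}v_{s}(\theta f)(x)|\le C_{5}(f)e^{2\alpha_{1}x}$ valid uniformly in $s$, and then the error terms in $\psi(v_{s}f_{t})$ must be shown to be $L(d\nu)$-integrable uniformly in $s$ and to contribute $o(1)$ after integration over $[0,t]$ — this is precisely where $\langle\nu,e^{2\alpha_{1}\cdot}\rangle\le\langle\nu,e^{\alpha_{1}\cdot}\rangle^{2}$ and hypothesis (\ref{4.4''}) enter, via a truncation. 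By contrast the remaining steps — the cancellation of the diverging $O(\sqrt t)$ first-order term against the centering (automatic from Proposition \ref{prop5.19'}) and the Ces\`{a}ro limit of the quadratic term — are routine once those uniform estimates are available.
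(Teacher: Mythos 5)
Your proposal is correct and follows essentially the same route as the paper's own proof: the Laplace functional from Corollary~\ref{cor5.18'}, the second-order Taylor expansion of $v_s(\theta f)$ with $\Pi_s f$ and $\Gamma_s f$ as the first and second $\theta$-derivatives, cancellation of the divergent linear term against the centering via Proposition~\ref{prop5.19'}, and the Ces\`{a}ro limit of the quadratic term using the uniform bounds of Propositions~\ref{prop5.22} and~\ref{prop5.23}. The only differences are that you spell out details the paper leaves implicit (the third-order remainder control, the extension from $f\ge 0$ to signed $f$, and the passage from one-sided Laplace-transform convergence to weak convergence via tightness), which strengthens rather than changes the argument.
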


	\begin{proof} It suffices to consider $f\in B(0, \infty)^{+}$. For the convenience of statement of the proof, we write $f_{\sqrt{t}} = f/\sqrt{t}$ for $t\ge 0$. By (\ref{5.31''.}) and (\ref{5.19'.}) we have
		\begin{align}\label{5.34}
			\mathbf{P}_{\sigma} \exp \Big\{\!- \frac{1}{\sqrt{t}}\big[\langle Z_t,f\rangle-\mathbf{P}_{\sigma}\langle Z_t ,f\rangle \big]\Big\}=\exp \Big\{\!\langle \sigma, \Pi_{t} f_{\sqrt{t}}-v_{t}f_{\sqrt{t}}\rangle+\int_{0}^{t} H_{s}(f_{\sqrt{t}})
			ds\Big\},
		\end{align}
		where
		\begin{align*}
			H_{s}(f_{\sqrt{t}})=\int_{\mathfrak{N}(0,\infty)^{\circ}} \big[e^{-\langle \nu ,v_s f_{\sqrt{t}} \rangle}-1+\langle \nu ,\Pi_s f_{\sqrt{t}} \rangle\big] L(d \nu).
		\end{align*}
		By Jensen's inequality and (\ref{5.22''}) it is easy to see that, as $t\rightarrow \infty$,
		\begin{align*}
			\langle \sigma , v_t f_{\sqrt{t}}\rangle\le\langle \sigma , \Pi_t f_{\sqrt{t}}\rangle=\frac{1}{\sqrt{t}}\langle \sigma , \Pi_t f\rangle \le \frac{1}{\sqrt{t}}\langle\sigma , \Pi_{\infty} f\rangle \rightarrow 0.
		\end{align*}
		On the other hand, for any $0< s\le t$ and $x\in(0,\infty)$, by Taylor's expansion we have, as $t\rightarrow\infty$,
		\begin{align*}
			v_{s}f_{\sqrt{t}} (x)
			&=
			v_{s}(t^{-1/2}f) (x)\\
			&=
			\frac{1}{\sqrt{t}}\Pi_{s}f(x) -\frac{1}{2t}\Gamma_{s}f(x)+ o\Big(\frac{1}{t}\Big).
		\end{align*}
		Then by Proposition \ref{prop5.22}, Proposition \ref{prop5.23} and (\ref{4.4''}) we obtain, as $t\rightarrow\infty$,
		\begin{small}
			\begin{align*}
				&\int_0^t H_{s}(f_{\sqrt{t}}) ds\\
				&=
				\int_0^t ds \int_{\mathfrak{N}(0,\infty)^{\circ}} \big[e^{-\langle \nu ,v_s f_{\sqrt{t}} \rangle}-1+\langle \nu ,\Pi_s f_{\sqrt{t}} \rangle\big] L(d \nu)\\
				&=
				\int_0^t ds \int_{\mathfrak{N}(0,\infty)^{\circ}} \big[\langle \nu ,\Pi_s f_{\sqrt{t}}-v_s f_{\sqrt{t}} \rangle+\frac{1}{2}\langle \nu ,v_s f_{\sqrt{t}} \rangle^2\big] L(d \nu)+\int_0^t ds\int_{\mathfrak{N}(0,\infty)^{\circ}} o\big(\langle \nu ,v_s f_{\sqrt{t}} \rangle^2\big) L(d \nu)\\
				&=
				\frac{1}{2t}\int_0^t ds \int_{\mathfrak{N}(0,\infty)^{\circ}} \big[\langle \nu ,\Gamma_s f\rangle +\langle \nu ,\Pi_s f \rangle^2\big] L(d \nu)+o\big(\frac{1}{t}\big)\int_0^t ds\int_{\mathfrak{N}(0,\infty)^{\circ}} \langle \nu ,\Pi_s f \rangle^2 L(d \nu)\\
				&\rightarrow
				\frac{1}{2} \int_{\mathfrak{N}(0,\infty)^{\circ}} \big[\langle \nu ,\Gamma_{\infty} f\rangle +\langle \nu ,\Pi_{\infty} f \rangle^2\big] L(d \nu)<\infty.
			\end{align*}
		\end{small}Combining the above gives
		\begin{align*}
			\lim_{t \rightarrow \infty} \mathbf{P}_{\sigma} \exp \Big\{\!- \frac{1}{\sqrt{t}}[\langle Z_t,f\rangle-\mathbf{P}_{\sigma}\langle Z_t ,f\rangle ]\Big\}=\exp \Big\{\frac{1}{2} \int_{\mathfrak{N}(0,\infty)^{\circ}} \big[\langle \nu ,\Gamma_{\infty} f\rangle +\langle \nu ,\Pi_{\infty} f \rangle^2\big] L(d \nu)\Big\}.
		\end{align*}
		Then the theorem follows. $\hfill\square$ \end{proof}

	\acknowledgements{\rm We would like to thank the referees for their careful reading of the paper and helpful comments. We are grateful to the Laboratory of Mathematics and Complex Systems (Ministry of Education) for providing us the research facilities. This research is supported by the National Key R$\&$D Program of China (No. 2020YFA0712901).}
	
	\bf{Conflict of interest statement}\quad \rm The authors declared that they have no conflict of interest.

\end{document}